\renewcommand\d{\partial}
\theoremstyle{plain}
\newtheorem{theorem}{Theorem}
\newtheorem{lemma}{Lemma}
\newtheorem{proposition}{Proposition}
\newtheorem{remark}{Remark}
\newtheorem{definition}{Definition}
\newtheorem{corollary}{Corollary}
\newcommand{\BbbR}{{\mathbb R}}
\newcommand{\BbbC}{{\mathbb C}}
\newcommand{\mas}{\operatorname{Mas}}
\newcommand{\mor}{\operatorname{Mor}}
\newcommand{\dom}{\operatorname{dom}}
\newcommand{\ran}{\operatorname{ran}}
\newcommand{\colspan}{\operatorname{colspan}}
\DeclareMathOperator{\codim} {codim}
\newcommand\ba{\begin{equation}\begin{aligned}}
\newcommand\ea{\end{aligned}\end{equation}}
\newcommand{\diag}{{\rm diag }}
\newcommand{\bpr}{\begin{proposition}}
\newcommand{\epr}{\end{proposition}}
\newcommand{\bt}{\begin{theorem}}
\newcommand{\et}{\end{theorem}}
\newcommand{\bc}{\begin{corollary}}
\newcommand{\ec}{\end{corollary}}
\newcommand{\bl}{\begin{lemma}}
\newcommand{\el}{\end{lemma}}
\newcommand\br{\begin{remark}}
\newcommand\er{\end{remark}}
\newcommand\bp{\begin{pmatrix}}
\newcommand\ep{\end{pmatrix}}
\newcommand{\be}{\begin{equation}}
\newcommand{\ee}{\end{equation}}
\DeclareMathOperator{\sgn}{sgn}
\newcommand{\sign}{{\text{\rm sign }}}
\newcommand{\Id}{{\rm Id }}
\numberwithin{equation}{section}
\numberwithin{lemma}{section}
\numberwithin{theorem}{section}
\numberwithin{remark}{section}
\numberwithin{claim}{section}
\numberwithin{corollary}{section}
\numberwithin{proposition}{section}
\numberwithin{definition}{section}
\numberwithin{condition}{section}
\title{A Sturm--Liouville theorem for quadratic operator pencils}
\author{Alim Sukhtayev,  Kevin Zumbrun}
\begin{document}

\maketitle

\begin{abstract} 
	We establish a Sturm--Liouville theorem for quadratic operator pencils counting 
	their unstable real roots, with applications to stability of waves.
	Such pencils arise, for example, in reduction of eigenvalue systems to higher-order scalar problems.
\end{abstract}

\section{Introduction}\label{introduction}
In this paper, motivated by recent results of \cite{SYZ18} in a special case,
we establish a general Sturm--Liouville problem for quadratic operator pencils 
on the half- or whole-line.
Specifically, we consider eigenvalue problems on the half line,
\begin{align} \label{main_s}
\begin{split}
&y'' + V(x) y = \lambda f_1(x) y+\lambda^2 f_2(x) y; 
\quad x\in\BbbR_-,\\
&(c+\phi(\lambda))y(0)- y'(0)=0.
\end{split}
\end{align}
and on the whole line,
\begin{equation} \label{main}
y'' + V(x) y = \lambda f_1(x) y+\lambda^2 f_2(x) y; 
\quad x\in\BbbR,
\end{equation}

where $\phi$ 
is a complex analytic matrix-valued function. 
The matrix $c\in M_n(\mathbb{C})$ is Hermitian and $\phi(\lambda)$ is Hermitian for $\lambda\in\mathbb{R}$, $V,f_j \in C(\mathbb{R}, \mathbb{C}^{n\times n})$ are Hermitian potentials. We also list  the following assumptions:

\medskip

\noindent
{\bf (A1)} The limits $\lim_{x \to - \infty} V(x) = V_{-}$ and $\lim_{x \to - \infty} f_j(x) = f_{j-}$ exist,  and $V-V_-, f_j-f_{j-}\in L^1(\BbbR)$, $f_{1}>0$ and $f_{2}\geq\delta>0$, and there is $\gamma<0$ such that, for all $\mu\in\BbbR$ and all $\lambda\in\BbbC$,
\begin{align}
\det(-\mu^2 + V_- -\lambda f_{1-} -\lambda^2 f_{2-})=0
\end{align}
implies 
\begin{equation}
\Re\lambda\leq \gamma<0.
\end{equation}

\noindent
{\bf (A2)} $\phi(0)=0$ and $\phi'(\lambda)<0$ for $\lambda\in\mathbb{R}_+$.

\noindent
{\bf (A3)} $\sign\Im\lambda \Im\phi(\lambda)\leq 0$ for all $\lambda$ with $\Re\lambda\geq0$.\footnote{
	Here and elsewhere $\Im M$ for an operator $M$ is defined as its skew-symmmetric part $\frac12(M^*-M)$.
	Note, for $\Im \lambda=0$, that $\phi(\lambda)$ since Hermitian, automatically satisfied $\Im \phi(\lambda)=0$.}

\noindent
{\bf (A4)} The limits $\lim_{x \to \pm \infty} V(x) = V_{\pm}$ and $\lim_{x \to \pm \infty} f_j(x) = f_{j\pm}$ exist,  and $V-V_\pm, f_j-f_{j\pm}\in L^1(\BbbR)$, $f_{1}>0$ and $f_{2}\geq\delta>0$, and there is $\gamma<0$ such that, for all $\mu\in\BbbR$ and all $\lambda\in\BbbC$,
\begin{align}
\det(-\mu^2 + V_\pm -\lambda f_{1\pm} -\lambda^2 f_{2\pm})=0
\end{align}
implies 
\begin{equation}
\Re\lambda\leq \gamma<0.
\end{equation}

{\bf (A4)}  is related to problem \eqref{main};  {\bf (A1)}, {\bf (A2)} and {\bf (A3)} are related to problem \eqref{main_s}.
Our particular interest lies in counting the number of real nonnegative
eigenvalues of \eqref{main} and \eqref{main_s}. 
As described further in Section \ref{s:disc}, quadratic eigenvalue problems
\eqref{main}--\eqref{main_s} arise for example through reduction of a 
standard eigenvalue system to a higher-order system in a lower-dimensional variable.
As such, their stability has bearing on stability of traveling waves, calculus of variations, etc.
In particular, reduction of a first-order $2\times 2$ system to a second-order scalar problem can always be
performed \cite{JNRYZ18,SYZ18,Y19}, in which case the assumptions of Hermitian coefficients, since they
are real scalar, is automatically satisfied.

We consider at the same time the truncated eigenvalue problems
\begin{align} \label{mainL}
\begin{split}
	&y'' + V(x) y = \lambda f_1(x) y+\lambda^2 f_2(x) y; 
	\quad x\in\BbbR_L:=(-\infty, L],\,\,L\in\mathbb{R},\\
	&y(L)=0.
	\end{split}
\end{align}

Next, we introduce the corresponding operator pencils:

\begin{align}\label{L-}
\begin{split}
&\mathcal{L_-}(\lambda):\dom(\mathcal{L}_-(\lambda))\subset( L^2(\BbbR_-))^n\to( L^2(\BbbR_-))^n,\\
&\mathcal{L_-}(\lambda)y:=y'' + V(x) y- \lambda f_1(x) y-\lambda^2 f_2(x) y, \,y\in\dom(\mathcal{L}_-(\lambda)),
\quad x\in\BbbR_-,\\
&\dom(\mathcal{L}_-(\lambda))=\{y\in( H^2(\BbbR_-))^n:(c+\phi(\lambda))y(0)- y'(0)=0\}.
\end{split}
\end{align}
And
\begin{align}\label{LL}
\begin{split}
&\mathcal{L}_L(\lambda):\dom(\mathcal{L}_L(\lambda))\subset( L^2(\BbbR_L))^n\to( L^2(\BbbR_L))^n,\\
&\mathcal{L}_L(\lambda)y:=y'' + V(x) y- \lambda f_1(x) y-\lambda^2 f_2(x) y, \,y\in\dom(\mathcal{L}_L(\lambda)),
\quad x\in\BbbR_L,\\
&\dom(\mathcal{L}_L(\lambda))=\{y\in( H^2(\BbbR_L))^n: y(L)=0\}.
\end{split}
\end{align}
Finally,
\begin{align} 
\begin{split}
&\mathcal{L}(\lambda):\dom(\mathcal{L}(\lambda))\subset( L^2(\BbbR))^n\to( L^2(\BbbR))^n,\\
&\mathcal{L}(\lambda)y:=y'' + V(x) y- \lambda f_1(x) y-\lambda^2 f_2(x) y, \,y\in\dom(\mathcal{L}(\lambda)),
\quad x\in\BbbR,\\
&\dom(\mathcal{L}(\lambda))=( H^2(\BbbR))^n.
\end{split}
\end{align}

{\it Essential spectrum.} Our 
first
goal is to show that Assumption {\bf (A1)} implies that there exists an open subset $\Omega$ containing the closed right half plane that consists of either points of the resolvent set  or isolated eigenvalues of finite algebraic multiplicity of the operator pencil $\mathcal{L_-}(\cdot)$.

We introduce the closed densely defined operator pencil $\mathcal{T}(\lambda):\mathcal{D(\lambda)}\to\mathcal{H}$, where $\mathcal{D(\lambda)}\subset\mathcal{H}$ is the domain of $\mathcal{T}(\lambda)$.

\begin{definition}(essential spectrum)
	The essential spectrum of $\mathcal{T}$, denoted $\sigma_{ess}(\mathcal{T})$, is the set of all complex numbers $\lambda$ such that $\mathcal{T}(\lambda)$ is not a Fredholm operator with index $0$.
\end{definition}

 Since for the half-line case, the domain of the operator pencil is $\lambda$ dependent, we couldn't find a precise reference for the following lemma which we prove in Appendix \ref{essspect}. 

\begin{lemma}\label{essspec}
	Let Assumption {\bf (A1)} hold. Then $\Omega\subset\BbbC\setminus\sigma_{ess}(\mathcal{L_{-}})$. Moreover, $\Omega$ consists of either points of the resolvent set  or isolated eigenvalues of finite algebraic multiplicity of the operator pencil $\mathcal{L_{-}}(\cdot)$.
\end{lemma}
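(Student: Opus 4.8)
The plan is to reduce the problem to a constant-coefficient comparison via the limiting operator $\mathcal{L}_{-,\infty}(\lambda)$ obtained by replacing $V, f_j$ with their limits $V_-, f_{j-}$, and then exploit the relative compactness of the difference. First I would identify the candidate region $\Omega$: using Assumption \textbf{(A1)}, the dispersion relation $\det(-\mu^2 + V_- - \lambda f_{1-} - \lambda^2 f_{2-}) = 0$ has no solutions with $\mu \in \mathbb{R}$ and $\Re\lambda > \gamma$, so for each $\lambda$ in an open neighborhood $\Omega$ of the closed right half-plane the constant-coefficient symbol is invertible for all real $\mu$; this gives hyperbolicity of the first-order ODE system associated to $\mathcal{L}_{-,\infty}(\lambda)$, i.e.\ an exponential dichotomy with no center direction. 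I would take $\Omega = \{\lambda : \Re\lambda > \gamma'\}$ for some $\gamma' \in (\gamma, 0)$, intersected with a region where the analytic continuation of the relevant spectral projections behaves well.

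Next I would treat the model operator on the half-line. Writing \eqref{main_s} as a first-order system $u' = A(x,\lambda)u$ with $A \to A_-(\lambda)$ as $x \to -\infty$, the stable subspace $\mathbb{S}_-(\lambda)$ of $A_-(\lambda)$ (the span of eigenvectors with eigenvalues of positive real part, governing decay at $-\infty$) has a $\lambda$-analytic, fixed dimension on $\Omega$ because of the spectral gap coming from \textbf{(A1)}. The boundary condition $(c + \phi(\lambda))y(0) - y'(0) = 0$ cuts out an $n$-dimensional subspace $\mathcal{B}(\lambda)$ at $x = 0$, analytic in $\lambda$. I would show that $\mathcal{L}_-(\lambda)$ is Fredholm of index $0$ precisely when the evolved stable subspace and $\mathcal{B}(\lambda)$ are transverse, and that it fails to be invertible exactly on the zero set of an associated Evans-type determinant $D(\lambda)$, which is analytic on $\Omega$; finiteness of algebraic multiplicity then follows from analyticity of $D$ and the fact that $D \not\equiv 0$ (e.g.\ $D$ is nonzero for $\Re\lambda$ large, by a standard large-$|\lambda|$ estimate using $f_2 \ge \delta > 0$). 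For the general (non-constant) coefficients, the key point is that $\mathcal{L}_-(\lambda) - \mathcal{L}_{-,\infty}(\lambda)$ is a relatively compact perturbation on $H^2(\mathbb{R}_-)$, since $V - V_-, f_j - f_{j-} \in L^1$ and hence the multiplication operators by these differences, composed with $(\mathcal{L}_{-,\infty}(\lambda) - I)^{-1}$ (or a suitable resolvent), are compact; relative compactness preserves the Fredholm index, so $\mathcal{L}_-(\lambda)$ is Fredholm of index $0$ on $\Omega$, and analytic Fredholm theory upgrades this to: either $\lambda$ is in the resolvent set or it is an isolated eigenvalue of finite algebraic multiplicity.

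The main obstacle I anticipate is the $\lambda$-dependence of the domain $\dom(\mathcal{L}_-(\lambda))$, which is exactly why the authors flag that no reference was found. Standard analytic Fredholm theory assumes a fixed domain, so I would handle this by conjugating to a $\lambda$-independent domain: pick a smooth cutoff $\chi$ supported near $0$ and set $y = e^{\chi(x)\psi(\lambda)x}\tilde{y}$ or, more simply, absorb the boundary condition by a $\lambda$-analytic change of variables $\tilde{y}(x) = y(x) - \chi(x)(\text{affine correction involving } \phi(\lambda))$ so that $\tilde y$ satisfies a Neumann-type condition $\tilde y'(0) = c\,\tilde y(0)$ independent of $\lambda$; the transformed pencil $\tilde{\mathcal{L}}_-(\lambda)$ then has fixed domain, depends analytically on $\lambda$ (using analyticity of $\phi$), and has the same spectrum and multiplicities. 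After this reduction the argument above applies verbatim. I would also need to verify that the essential spectrum, defined via failure of the index-zero Fredholm property, is genuinely disjoint from $\Omega$; this is where the hyperbolicity/exponential dichotomy on the half-line, together with the relatively compact perturbation argument, does the work, and I would cite the standard characterization of essential spectrum of constant-coefficient asymptotic operators on the half-line (adapted to $\lambda$-pencils) to close the gap.
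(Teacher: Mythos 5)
Your outline is sound and shares the skeleton of the paper's argument (pass to the constant-coefficient limit, use \textbf{(A1)} to get hyperbolicity of $A_-(\lambda)$ on a neighborhood $\Omega$ of the closed right half-plane, detect spectrum through an analytic determinant, finish with analytic-Fredholm-type reasoning), but you execute the two technical steps differently. For Fredholmness of the variable-coefficient pencil the paper does not use a relatively compact perturbation off $\mathcal{L}_{-\infty}(\lambda)$: it first proves Lemma \ref{Fredlim} for the constant-coefficient pencil by solving $Y'=A_-(\lambda)Y+F$ with the dichotomy Green's function and encoding solvability in the $2n\times 2n$ matrix $E(\lambda)$ built from the boundary subspace $\colspan{I\choose c+\phi(\lambda)}$ and a basis of $E^u_-(\lambda)$, and then repeats the same Green's-function analysis for the variable-coefficient system, where $V-V_-,\,f_j-f_{j-}\in L^1$ enters through roughness of exponential dichotomies; this yields kernel and cokernel of $\mathcal{L}_-(\lambda)$ equal to those of the finite matrix $E_-(\lambda)$, hence index $0$, with no compactness argument. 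Your route is viable (note that compactness of the multiplication-times-resolvent operators uses that $V-V_-$ and $f_j-f_{j-}$ are bounded and tend to $0$ at $-\infty$, which comes from the existence of the limits in \textbf{(A1)}, not from $L^1$ alone). For the $\lambda$-dependent domain the paper does not conjugate to a fixed boundary condition; it passes to the two-component operator $\hat{\mathcal{L}}_-(\lambda)y=\big(y''+Vy-\lambda f_1y-\lambda^2f_2y,\;(c+\phi(\lambda))y(0)-y'(0)\big)$ on the fixed space $H^2(\BbbR_-)\to (L^2(\BbbR_-))^n\times\BbbC^n$, in the style of \cite{MM,BLR}, and obtains isolatedness and finite algebraic multiplicity from meromorphy of $\mathcal{L}_-^{-1}$ together with the zeros of the analytic function $\det E_-(\lambda)$. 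Your multiplicative substitution $y=e^{\chi(x)x\,\phi(\lambda)}\tilde y$ with $\chi(0)=1$ does absorb $\phi(\lambda)$ (the additive correction you mention first is murkier, since one must check it is an invertible analytic transformation preserving multiplicities), so that workaround also closes the domain issue; each approach buys roughly the same thing, yours leaning on standard perturbation theory, the paper's being self-contained and giving the explicit kernel/cokernel identification with $E_-(\lambda)$ that is reused later.

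Two points to tighten. First, the clause ``$\mathcal{L}_-(\lambda)$ is Fredholm of index $0$ precisely when the evolved stable subspace and $\mathcal{B}(\lambda)$ are transverse'' is wrong as stated: Fredholmness with index $0$ holds for \emph{every} $\lambda\in\Omega$ (that is exactly the content of the essential-spectrum claim), while transversality characterizes invertibility; if Fredholmness were tied to transversality, eigenvalues in $\Omega$ would belong to $\sigma_{ess}$ as defined here. Your subsequent relative-compactness argument gives the correct statement, so this is a slip of phrasing rather than a fatal gap, but it should be fixed. Second, identifying ``finite algebraic multiplicity'' with the order of vanishing of your Evans determinant requires the pencil notion of multiplicity (chains of generalized eigenvectors, equivalently the pole order of $\mathcal{L}_-^{-1}$, cf.\ \cite{BLR,MM}), and the nontriviality $D\not\equiv0$ on $\Omega$ deserves care: under \textbf{(A1)} alone, with $\phi$ merely analytic and Hermitian on the real axis, a large-$|\lambda|$ estimate is not automatic, and your instinct to verify it explicitly is a point the paper itself passes over silently.
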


Similarly,
\begin{lemma}\label{essspecL}
	Let Assumption {\bf (A4)} hold. Then $\Omega\subset\BbbC\setminus\sigma_{ess}(\mathcal{L}_L)$. Moreover, $\Omega$ consists of either points of the resolvent set  or isolated eigenvalues of finite algebraic multiplicity of the operator pencil $\mathcal{L}_L(\cdot)$.
\end{lemma}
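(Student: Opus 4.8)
The plan is to reduce Lemma \ref{essspecL} to Lemma \ref{essspec}, whose proof is carried out in Appendix \ref{essspect}, by showing that the two operator pencils $\mathcal{L}_L(\cdot)$ and $\mathcal{L}_-(\cdot)$ differ only in ways that do not affect the essential spectrum. The essential point is that the essential spectrum of a second-order differential operator pencil on a (semi-infinite) interval is determined entirely by the behavior of the coefficients $V,f_1,f_2$ at the infinite end(s) of the interval, together with the algebraic/Fredholm structure of the boundary conditions, and is insensitive both to compactly supported perturbations of the coefficients and to a change of a regular boundary condition at a finite endpoint.

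First I would record that $\mathcal{L}_L(\lambda)$ is, for each fixed $\lambda\in\BbbC$, a closed densely defined operator on $(L^2(\BbbR_L))^n$ with $\lambda$-independent domain $\{y\in (H^2(\BbbR_L))^n: y(L)=0\}$, and that $\lambda\mapsto \mathcal{L}_L(\lambda)$ is an analytic family of type (A) (the $\lambda$-dependence enters only through the bounded multiplication operators $-\lambda f_1-\lambda^2 f_2$). Then I would establish the two-sided version of the Fredholm/essential-spectrum computation of Appendix \ref{essspect}: writing the equation as a first-order system $Y'=A(x,\lambda)Y$ with limiting coefficient matrices $A_\pm(\lambda)$ coming from $V_\pm,f_{j\pm}$, Assumption {\bf (A4)} guarantees that for $\lambda\in\Omega$ (in particular on a neighborhood of the closed right half-plane) the matrices $A_\pm(\lambda)$ are hyperbolic, i.e. have no purely imaginary eigenvalues — this is exactly the content of the dispersion-relation hypothesis $\det(-\mu^2+V_\pm-\lambda f_{1\pm}-\lambda^2 f_{2\pm})=0\Rightarrow\Re\lambda\le\gamma<0$, read with $\mu\in\BbbR$ playing the role of the imaginary Fourier variable. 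Exponential dichotomies on $(-\infty,0]$ and on $[0,L]$ (the latter trivial, the interval being compact) then show that $\mathcal{L}_L(\lambda)$ is Fredholm for all $\lambda\in\Omega$; counting dimensions of stable/unstable subspaces at $\pm\infty$ against the two boundary conditions ($y(L)=0$ at the right, none imposed by $x\to-\infty$) gives Fredholm index $0$, so $\Omega\subset\BbbC\setminus\sigma_{ess}(\mathcal{L}_L)$. This part is essentially a transcription of the appendix argument, with the Robin condition $(c+\phi(\lambda))y(0)-y'(0)=0$ replaced by the genuinely simpler Dirichlet condition $y(L)=0$; in particular, since the boundary condition here is $\lambda$-independent, the subtlety that motivated Lemma \ref{essspec} — a $\lambda$-dependent domain — does not even arise, so one could alternatively just cite a standard reference.

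For the second assertion — that on $\Omega$ the pencil has only isolated eigenvalues of finite algebraic multiplicity — I would invoke the analytic Fredholm theorem: $\mathcal{L}_L(\cdot)$ is an analytic Fredholm-index-$0$ operator-valued function on the connected open set $\Omega$, so either $\mathcal{L}_L(\lambda)$ is non-invertible for every $\lambda\in\Omega$ or else its non-invertible points form a discrete subset of $\Omega$, at each of which the inverse is meromorphic, forcing the eigenvalue to have finite algebraic multiplicity. To exclude the first, degenerate alternative it suffices to exhibit one $\lambda_0\in\Omega$ with $\mathcal{L}_L(\lambda_0)$ invertible; taking $\lambda_0$ real and sufficiently large (or large negative) makes $-\lambda_0 f_1-\lambda_0^2 f_2$ dominate, and since $f_2\ge\delta>0$ the operator $\mathcal{L}_L(\lambda_0)$ is boundedly invertible by a standard coercivity/energy estimate together with the Dirichlet condition at $L$. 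The main obstacle — to the extent there is one — is bookkeeping: verifying carefully that the dimension count of boundary subspaces yields index exactly $0$ rather than merely Fredholmness, and checking that {\bf (A4)} delivers hyperbolicity of $A_\pm(\lambda)$ uniformly on the relevant set $\Omega$; but all of this is already handled in Appendix \ref{essspect} for the half-line, and the compact right end $[0,L]$ contributes no essential spectrum, so the adaptation is routine.
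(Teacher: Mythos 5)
Your proposal is correct and follows essentially the route the paper itself intends: the paper disposes of Lemma \ref{essspecL} with ``similarly,'' referring to the Appendix \ref{essspect} argument (rewrite as a first-order system, use that {\bf (A4)} forces hyperbolicity of the limiting matrix at the infinite end on $\Omega$, obtain an exponential dichotomy by roughness, represent solutions by the dichotomy Green's function, and match the $n$-dimensional unstable subspace against the $n$-dimensional boundary subspace via a $2n\times 2n$ matrix $E(\lambda)$ to get closed range and Fredholm index $0$); your Fredholm step is, as you say, a transcription of this with the Dirichlet frame ${0\choose I_n}$ at $x=L$ in place of the Robin frame, and you rightly observe that the $\lambda$-dependent-domain subtlety that motivated Lemma \ref{essspec} is absent here. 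The one place you genuinely diverge is the discreteness/finite-multiplicity step: the paper identifies eigenvalues in $\Omega$ with zeros of the analytic function $\det(E(\lambda))$ and gets meromorphy of the inverse (hence finite algebraic multiplicity) by citing \cite{BLR,MM}, whereas you invoke the analytic Fredholm alternative for the type-(A) family and exclude the degenerate branch by exhibiting an invertible point at large real $\lambda_0$ via coercivity ($f_2\ge\delta>0$). That variant is sound and in fact supplies explicitly the nondegeneracy ($\det E(\cdot)\not\equiv 0$, equivalently one point of invertibility, consistent with Lemma \ref{Vinf}) that the paper leaves implicit; just note that the alternative is applied on the connected component of $\Omega$ containing the right half-plane, which is all the statement requires.
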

 
 For purpose of self-containment, we also provide the proof of the following lemma in Appendix \ref{essspect}. 

\begin{lemma}
	Let Assumption {\bf (A4)} hold. Then $\Omega\subset\BbbC\setminus\sigma_{ess}(\mathcal{L_{}})$. Moreover, $\Omega$ consists of either points of the resolvent set  or isolated eigenvalues of finite algebraic multiplicity of the operator pencil $\mathcal{L_{}}(\cdot)$.
\end{lemma}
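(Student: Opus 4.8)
The plan is to argue exactly as for $\mathcal{L}_-$ in Appendix~\ref{essspect}, the situation here being simpler because on the whole line the domain $(H^2(\BbbR))^n$ is independent of $\lambda$ and no boundary condition enters. Fix $\lambda\in\Omega$. By {\bf (A4)} and the definition of $\Omega$ (which contains the closed right half--plane, where {\bf (A4)} applies), the matrices $-\mu^2 I+V_\pm-\lambda f_{1\pm}-\lambda^2 f_{2\pm}$ are invertible for all $\mu\in\BbbR$; since moreover $\|(-\mu^2 I+V_\pm-\lambda f_{1\pm}-\lambda^2 f_{2\pm})^{-1}\|=O(\mu^{-2})$ as $|\mu|\to\infty$, the Fourier transform shows that the limiting constant--coefficient operators $\mathcal{L}_\pm(\lambda):=\d_x^2+V_\pm-\lambda f_{1\pm}-\lambda^2 f_{2\pm}$ on $(L^2(\BbbR))^n$, with domain $(H^2(\BbbR))^n$, are boundedly invertible, $\mathcal{L}_\pm(\lambda)^{-1}$ being bounded from $(L^2(\BbbR))^n$ into $(H^2(\BbbR))^n$.

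The heart of the proof is the Fredholm property of $\mathcal{L}(\lambda):(H^2(\BbbR))^n\to(L^2(\BbbR))^n$. I would obtain it by a patching parametrix: pick $\chi_\pm\in C^\infty(\BbbR)$ with $\chi_-+\chi_+\equiv1$ and $\chi_\pm\equiv1$ near $\pm\infty$, and $\tilde\chi_\pm\in C^\infty(\BbbR)$ with $\tilde\chi_\pm\equiv1$ on $\operatorname{supp}\chi_\pm$ and vanishing near $\mp\infty$, and set $S:=\tilde\chi_-\mathcal{L}_-(\lambda)^{-1}\chi_-+\tilde\chi_+\mathcal{L}_+(\lambda)^{-1}\chi_+$. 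Using $\mathcal{L}(\lambda)\mathcal{L}_\pm(\lambda)^{-1}=I+W_\pm(\cdot,\lambda)\mathcal{L}_\pm(\lambda)^{-1}$ with $W_\pm:=(V-V_\pm)-\lambda(f_1-f_{1\pm})-\lambda^2(f_2-f_{2\pm})$, together with $\tilde\chi_\pm\chi_\pm=\chi_\pm$ and $\chi_-+\chi_+=1$, one gets
\[
\mathcal{L}(\lambda)S-I=\sum_{\pm}\Big(\tilde\chi_\pm W_\pm\,\mathcal{L}_\pm(\lambda)^{-1}\chi_\pm+[\d_x^2,\tilde\chi_\pm]\,\mathcal{L}_\pm(\lambda)^{-1}\chi_\pm\Big),
\]
and likewise $S\mathcal{L}(\lambda)-I$. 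Every term on the right is compact: by {\bf (A4)} the functions $V-V_\pm$, $f_j-f_{j\pm}$ lie in $L^1\cap L^\infty\subset L^2$ on the respective half--lines, so multiplication by $\tilde\chi_\pm W_\pm$ sends bounded subsets of $(H^2(\BbbR))^n$ into precompact subsets of $(L^2(\BbbR))^n$ (Sobolev compactness on bounded intervals, plus $L^2$--smallness of $W_\pm$ near $\pm\infty$), while $[\d_x^2,\tilde\chi_\pm]$ is a first--order operator with compactly supported smooth coefficients, so $[\d_x^2,\tilde\chi_\pm]\mathcal{L}_\pm(\lambda)^{-1}$ is compact by Rellich. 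Hence $\mathcal{L}(\lambda)$ is Fredholm. (Equivalently, one could recast $\mathcal{L}(\lambda)y=g$ as a first--order system $Y'=A(x,\lambda)Y+(0,g)^{\top}$, note that {\bf (A4)} makes the limiting matrices $A_\pm(\lambda)$ hyperbolic — an imaginary eigenvalue $\mu=i\xi$ would give $\det(-\xi^2 I+V_\pm-\lambda f_{1\pm}-\lambda^2 f_{2\pm})=0$, impossible for $\lambda\in\Omega$ — and quote the standard Fredholm theory of asymptotically hyperbolic first--order operators.)

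To identify the index I would use that it is locally constant, hence constant on the connected set $\Omega$, and evaluate it at a real $\lambda_0>\sqrt{\|V\|_{L^\infty}/\delta}$ (which lies in $\Omega$). There $\mathcal{L}(\lambda_0)$ is self--adjoint (all coefficients are Hermitian and $\lambda_0$ real) and, for $y\in(H^2(\BbbR))^n$,
\[
\langle\mathcal{L}(\lambda_0)y,y\rangle=-\|y'\|^2+\langle Vy,y\rangle-\lambda_0\langle f_1 y,y\rangle-\lambda_0^2\langle f_2 y,y\rangle\le\big(\|V\|_{L^\infty}-\lambda_0^2\delta\big)\|y\|^2,
\]
which is $<0$ for $y\neq0$ since $f_1>0$, $f_2\ge\delta>0$ and $\lambda_0>0$; thus $0\notin\sigma(\mathcal{L}(\lambda_0))$, $\mathcal{L}(\lambda_0)$ is invertible, and $\ind\mathcal{L}(\lambda)=0$ for every $\lambda\in\Omega$. (Alternatively, via the first--order reduction the index equals $\dim E^u_-(\lambda)-\dim E^u_+(\lambda)$, and since $\det(\mu^2 I+V_\pm-\lambda f_{1\pm}-\lambda^2 f_{2\pm})$ is even in $\mu$, hyperbolicity forces exactly $n$ spatial eigenvalues with $\Re\mu>0$ at each end, so the index is $n-n=0$.) This establishes $\Omega\subset\BbbC\setminus\sigma_{ess}(\mathcal{L})$.

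For the last assertion, note that $\lambda\mapsto\mathcal{L}(\lambda)=(\d_x^2+V)-\lambda f_1-\lambda^2 f_2$ is a polynomial — hence entire — $\mathcal{B}\big((H^2(\BbbR))^n,(L^2(\BbbR))^n\big)$--valued function, Fredholm of index $0$ on $\Omega$ and invertible at $\lambda_0\in\Omega$; the analytic Fredholm theorem then gives that $\mathcal{L}(\cdot)^{-1}$ is meromorphic on $\Omega$, its poles forming a discrete subset of $\Omega$ consisting of eigenvalues of finite algebraic multiplicity, all remaining points of $\Omega$ lying in the resolvent set. The one genuinely substantive step is the Fredholm property in the second paragraph; this is precisely where the $L^1$--decay of the coefficient perturbations and the hyperbolicity of the limiting symbols furnished by {\bf (A4)} are used, and once it is in hand the index computation and the meromorphy are routine.
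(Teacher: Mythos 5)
Your proof is correct, but it takes a genuinely different route from the paper's. The paper (Appendix \ref{essspect}, following the pattern of the half-line case) passes to the first-order system $Y'=A(x,\lambda)Y$, invokes exponential dichotomies on $\BbbR_\pm$ (inherited from the hyperbolic constant-coefficient limits by roughness), writes the solution explicitly via variation of constants/Green's functions, and characterizes solvability through a $2n\times 2n$ matrix $E(\lambda)$ built from $E^u_-(\lambda)$ and $E^s_+(\lambda)$; Fredholmness with index $0$ then follows from a dimension count, and isolatedness plus finite algebraic multiplicity come from the fact that eigenvalues are zeros of the analytic function $\det E(\lambda)$, with meromorphy of the inverse as in \cite{BLR,MM}. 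You instead stay in the second-order formulation on the fixed domain $(H^2(\BbbR))^n$: invertibility of the constant-coefficient limits by Fourier transform, Fredholmness by a patched parametrix with compact errors (using $L^1\cap L^\infty$ decay of the coefficient perturbations and Rellich), index $0$ by invertibility at a large real $\lambda_0$ plus local constancy of the index (or your pointwise count of spatial eigenvalues, which is essentially Palmer's index formula and avoids any connectedness issue), and then the analytic Fredholm theorem. Each approach buys something: yours is softer, avoids the dichotomy machinery, and the analytic Fredholm theorem delivers the discreteness/finite-multiplicity statement in one stroke; the paper's construction produces the Evans-type matrix $E(\lambda)$ and the stable/unstable bundle data that are reused in the Maslov-index analysis, and—crucially—it is the argument that also works for the half-line pencil $\mathcal{L}_-(\cdot)$, whose domain depends on $\lambda$, where your fixed-domain parametrix argument would not apply directly. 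Two small points to tidy: for the index-constancy and for the analytic Fredholm alternative you should either take $\Omega$ to be the connected component containing the closed right half-plane (which is the natural choice, and contains your $\lambda_0$) or rely on your pointwise index computation; and the trivial sign slip $\det(\mu^2 I+\dots)$ versus $\det(-\mu^2 I+\dots)$ is harmless since the symbol is even in $\mu$.
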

{\it Maslov index.} As a starting point, we define what we will mean by a {\it Lagrangian
subspace} of $\mathbb{C}^{2n}$.

\begin{definition} \label{lagrangian_subspace}
We say $\ell \subset \mathbb{C}^{2n}$ is a Lagrangian subspace of $\mathbb{C}^{2n}$
if $\ell$ has dimension $n$ and
\begin{equation} 
(J_{2n} u, v)_{\mathbb{C}^{2n}} = 0, 
\end{equation} 
for all $u, v \in \ell$. Here, $(\cdot, \cdot)_{\mathbb{C}^{2n}}$ denotes
the standard inner product on $\mathbb{C}^{2n}$. In addition, we denote by 
$\Lambda (n)$ the collection of all Lagrangian subspaces of $\mathbb{C}^{2n}$, 
and we will refer to this as the {\it Lagrangian Grassmannian}. 
\end{definition}

Any Lagrangian subspace of $\mathbb{C}^{2n}$ can be
spanned by a choice of $n$ linearly independent vectors in 
$\mathbb{C}^{2n}$. We will generally find it convenient to collect
these $n$ vectors as the columns of a $2n \times n$ matrix $\mathbf{X}$, 
which we will refer to as a {\it frame} for $\ell$. Moreover, we will 
often coordinatize our frames as $\mathbf{X} = {X \choose Y}$, where $X$ and $Y$ are 
$n \times n$ matrices.

Suppose $\ell_1 (\cdot), \ell_2 (\cdot)$ denote paths of Lagrangian 
subspaces $\ell_i: \mathcal{I} \to \Lambda (n)$, for some parameter interval 
$\mathcal{I}$. The Maslov index associated with these paths, which we will 
denote $\mas (\ell_1, \ell_2; \mathcal{I})$, is a count of the number of times
the paths $\ell_1 (\cdot)$ and $\ell_2 (\cdot)$ intersect, counted
with both multiplicity and direction. (In this setting, if we let 
$t_*$ denote the point of intersection (often referred to as a 
{\it conjugate point}), then multiplicity corresponds with the dimension 
of the intersection $\ell_1 (t_*) \cap \ell_2 (t_*)$; a precise definition of what we 
mean in this context by {\it direction} will be
given in Section \ref{maslov_section}.) In some cases, the Lagrangian 
subspaces will be defined along some path in the 
$(\alpha,\beta)$-plane 
\begin{equation*}
\Gamma = \{ (\alpha (t), \beta (t)): t \in \mathcal{I}\},
\end{equation*}    
and when it is convenient we will use the notation 
$\mas (\ell_1, \ell_2; \Gamma)$. 

 We say that the evolution of $\mathcal{L} = (\ell_1, \ell_2)$
is {\it monotonic} provided all intersections occur with 
the same direction. If the intersections all correspond with 
the positive direction, then we can compute
\begin{equation*}
\mas (\ell_1, \ell_2; \mathcal{I}) 
= \sum_{t \in \mathcal{I}} \dim (\ell_1 (t) \cap \ell_2(t)).
\end{equation*}
Suppose $\mathbf{X}_1 (t) = {X_1 (t) \choose Y_1 (t)}$ and 
$\mathbf{X}_2 (t) = {X_2 (t) \choose Y_2 (t)}$ respectively 
denote frames for Lagrangian subspaces of $\mathbb{C}^{2n}$,
$\ell_1 (t)$ and $\ell_2(t)$. Then we can express
this last relation as 
\begin{equation*}
\mas (\ell_1, \ell_2; \mathcal{I}) 
= \sum_{t \in \mathcal{I}} \dim \ker (\mathbf{X}_1 (t)^* J \mathbf{X}_2 (t)).
\end{equation*}

\subsection{Main results}\label{s:main}
We establish the following generalized Sturm-Liouville theorems relating the spectral count, or number of
real eigenvalues greater than a given {\it nonnegative}
value $\lambda$, to the number of conjugate points of the Lagrangian frame
asymptotic to the decaying eigenspace at $x\to -\infty$ plus, in the case of the half-line problem \eqref{main_s},
a computable boundary correction term.
Note that, in contrast to the standard case of a linear operator pencil, 
we do not obtain information for negative $\lambda$, but only about the number of possible unstable real eigenvalues
$\lambda>0$.
Nonetheless, this is sufficient to determine stability or instability of spectra, 
which is typically the question of interest.

\begin{theorem} \label{bc2_theorem} 
	For equation (\ref{main_s}), let Assumptions {\bf (A1)}, {\bf (A2)} hold, 
	and let $\mathbf{X}$ and $\mathbf{X}_\phi$ denote
	the Lagrangian frames corresponding the unstable subpsace $E_-^u(x,\lambda)$ and the $\Phi(\lambda)$ subspace $\colspan{I_n \choose c+ \phi(\lambda)}$, respectively.  If 
	$\mathcal{N} (\lambda)$ denotes the spectral
	count for (\ref{main_s}) (the number of real eigenvalues that are greater than $\lambda$),
	then 
	\begin{align*}
	\mathcal{N} (\lambda)
	&=-\mas (E_-^u(\cdot; \lambda), \Phi(\lambda); [-\infty,0])-\dim(\ker(\mathcal{L}_-(\lambda)))\\&+\mor(\sqrt{\lambda f_{1-}+\lambda^2 f_{2-}-V_-}-c-\phi(\lambda))+\dim\ker(\sqrt{\lambda f_{1-}+\lambda^2 f_{2-}-V_-}-c-\phi(\lambda)),\\
	\,\,\lambda\geq0&.
	\end{align*}
\end{theorem}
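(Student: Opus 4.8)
The plan is to track how the spectral count $\mathcal{N}(\lambda)$ changes as $\lambda$ is increased from the given value to $+\infty$, using the homotopy invariance and additivity of the Maslov index over a suitable box in the $(\alpha,\beta)$-plane, where $\alpha$ is the spatial variable $x\in[-\infty,0]$ and $\beta$ is the spectral parameter running along the real axis. The two Lagrangian paths are $E_-^u(x;\lambda)$, the stable (decaying as $x\to-\infty$) subspace of the first-order ODE system associated to \eqref{main_s}, and the fixed target $\Phi(\lambda)=\colspan\binom{I_n}{c+\phi(\lambda)}$ encoding the boundary condition at $x=0$. The key structural facts I would assemble first: (i) by Lemma \ref{essspec} and Assumption {\bf (A1)}, the operator pencil $\mathcal{L}_-(\cdot)$ has only isolated eigenvalues of finite algebraic multiplicity in the closed right half-plane, so $\mathcal{N}(\lambda)$ is finite and the endpoints of any homotopy are well controlled; (ii) an eigenvalue of \eqref{main_s} at a real $\lambda$ corresponds exactly to a conjugate point $E_-^u(0;\lambda)\cap\Phi(\lambda)\neq\{0\}$, with geometric multiplicity equal to $\dim$ of the intersection; (iii) for $\lambda$ real and large, $\mathcal{N}(\lambda)=0$ — this needs the coercivity coming from $f_1>0$, $f_2\ge\delta>0$ together with {\bf (A1)}, which pushes all real eigenvalues below any fixed threshold (this is where I would use the quadratic-pencil structure to get a lower bound $\lambda\le C$ on real eigenvalues).

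Next I would set up the Maslov box $\partial([-\infty,0]\times[\lambda,\Lambda])$ for large $\Lambda$, and exploit that the Maslov index of a contractible loop vanishes, so the sum of the indices along the four sides is zero. The bottom side ($x=-\infty$ fixed, $\beta$ varying) contributes nothing because $E_-^u(-\infty;\beta)$ is the fixed asymptotic eigenspace $E_-^u(V_-,f_{j-};\beta)$ and by the spectral gap hypothesis in {\bf (A1)} it never meets $\Phi(\beta)$ there — actually more care is needed: I would instead choose the reference space at $x=-\infty$ to be exactly $E_-^u$, making that side a constant path. The top side ($\beta=\Lambda$) contributes nothing by the no-real-eigenvalue fact (iii), provided $\Lambda$ avoids the spectrum, and the shift/right side at $x=0$, $\beta$ running from $\lambda$ to $\Lambda$, counts exactly $-\mathcal{N}(\lambda)$ up to boundary contributions from the two endpoints (monotonicity in $\lambda$ of the crossing form is what makes this clean and signed — here Assumptions {\bf (A2)}, {\bf (A3)} enter to guarantee the crossings at $x=0$ all have one sign as $\beta$ increases). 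The spatial side at $\beta=\lambda$, $x$ running from $-\infty$ to $0$, is by definition $\mas(E_-^u(\cdot;\lambda),\Phi(\lambda);[-\infty,0])$.

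Assembling the box identity then yields $\mathcal{N}(\lambda)$ in terms of $-\mas(E_-^u(\cdot;\lambda),\Phi(\lambda);[-\infty,0])$ plus correction terms at the corners. The $-\dim(\ker(\mathcal{L}_-(\lambda)))$ term accounts for the crossing sitting exactly at the corner $(x,\beta)=(0,\lambda)$, which must be subtracted to avoid double-counting (endpoint conventions for the Maslov index). The remaining two terms, $\mor(\sqrt{\lambda f_{1-}+\lambda^2 f_{2-}-V_-}-c-\phi(\lambda))+\dim\ker(\sqrt{\lambda f_{1-}+\lambda^2 f_{2-}-V_-}-c-\phi(\lambda))$, come from analyzing the behavior at the corner $(x,\beta)=(-\infty,\lambda)$: the asymptotic unstable bundle is $\colspan\binom{I_n}{-\sqrt{\lambda f_{1-}+\lambda^2 f_{2-}-V_-}}$ (the decaying solution of the constant-coefficient system, with the matrix square root well-defined and positive by {\bf (A1)}), and its intersection behavior with $\Phi(\lambda)$ at the left endpoint is governed precisely by the Hermitian matrix $\sqrt{\lambda f_{1-}+\lambda^2 f_{2-}-V_-}-c-\phi(\lambda)$; its Morse index plus nullity is the net signed count of crossings "flowing out of" that corner. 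I would compute this by a direct local analysis of $\mathbf{X}(x)^*J\mathbf{X}_\phi$ near $x=-\infty$, or equivalently by comparing with a truncated problem \eqref{mainL} and letting $L\to-\infty$.

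The main obstacle I anticipate is the careful treatment of the corner at $x=-\infty$: both that the limit $\lim_{x\to-\infty}E_-^u(x;\lambda)$ exists as a Lagrangian subspace and that the Maslov contribution near that corner is genuinely the stated Morse-index-plus-nullity of the boundary matrix, with the correct sign. This requires combining the exponential convergence of $E_-^u(x;\lambda)$ to its asymptotic value (from {\bf (A1)} and the spectral gap, via the Gap Lemma or roughness of exponential dichotomies) with a monotonicity computation of the crossing form in $\lambda$ at the corner, and matching sign conventions between the Maslov index and the Morse index of the Hermitian symbol — a place where an off-by-the-nullity error is easy to make, which is presumably why the nullity term appears explicitly. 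The monotonicity inputs {\bf (A2)}–{\bf (A3)} on $\phi$ are exactly what make the $x=0$ side and this corner analysis yield signed (rather than merely mod-2 or unsigned) counts.
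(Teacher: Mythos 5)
Your overall skeleton (the Maslov box over $[-\infty,0]\times[\lambda,\lambda_\infty]$, vanishing of the index around the closed contour, monotonicity at $x=0$ in the spectral parameter giving the eigenvalue count, the right shelf at $\lambda=\lambda_\infty$ empty by an a priori upper bound on real eigenvalues of the quadratic pencil, and the left shelf giving $\mas(E_-^u(\cdot;\lambda),\Phi(\lambda);[-\infty,0])$) coincides with the paper's. But there is a genuine gap in your treatment of the side $x=-\infty$, which is exactly where the correction terms $\mor+\dim\ker$ of $\sqrt{\lambda f_{1-}+\lambda^2 f_{2-}-V_-}-c-\phi(\lambda)$ come from. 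You first assert this side contributes nothing because $E_-^u(-\infty;\beta)$ "never meets $\Phi(\beta)$" -- this is false: the constant-coefficient half-line problem \eqref{concoeff} does have nonnegative real eigenvalues in general, and each one is a conjugate point on the bottom shelf. Your fallback ("choose the reference space at $x=-\infty$ to be exactly $E_-^u$, making that side a constant path") is not available either, since the Maslov index is computed for the \emph{pair} $(E_-^u(-\infty;\beta),\Phi(\beta))$ and both legs genuinely depend on $\beta$; you cannot replace $\Phi(\beta)$ by anything else without changing the other three sides. You then try to recover the Morse-index terms as a corner contribution at $(-\infty,\lambda)$; that mis-identifies the mechanism. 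In the paper these terms are the \emph{total} crossing count along the whole bottom shelf: conjugate points there occur precisely where $M(\beta):=\sqrt{\beta f_{1-}+\beta^2 f_{2-}-V_-}-(c+\phi(\beta))$ is singular, and the count equals $\mor(M(\lambda))+\dim\ker(M(\lambda))$ only after one proves (i) $M'(\beta)>0$ for $\beta\ge0$, which requires the nontrivial fact that the derivative of the Hermitian matrix square root of an increasing positive family is strictly positive (Lemma \ref{positivity}, related to L\"owner's theorem) combined with {\bf (A2)}, (ii) $M(\beta_0)>0$ for some large $\beta_0$, and (iii) a separate argument that eigenvalues of the associated unitary cannot rotate clockwise or stall at $-1$ along $x=-\infty$ (continuity from finite $x$ plus the fact that crossings cannot persist on an interval). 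None of this is supplied or even correctly located in your outline, and in the matrix case it is the technical heart of the theorem.

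Two smaller omissions: the identity \eqref{top_shelf} requires knowing that every nonnegative eigenvalue of the pencil $\mathcal{L}_-(\cdot)$ is semisimple, so that the geometric count produced by the monotone rotation at $x=0$ equals the algebraic count $\mathcal{N}(\lambda)+\dim\ker(\mathcal{L}_-(\lambda))$; the paper proves this by pairing a putative generalized eigenvector against $y_0$ and using $((f_1+2\lambda_0 f_2)y_0,y_0)>0$, and your "endpoint convention / avoid double-counting" remark does not substitute for it. Also, {\bf (A3)} plays no role here (it is only used for reality of nonreal spectrum in Lemma \ref{nocomplex}); the monotonicity at $x=0$ uses $f_1>0$, $f_2\ge\delta>0$ and $\phi'(\lambda)<0$ from {\bf (A2)}, together with the integrated identity for $\mathbf{X}^*J\partial_\lambda\mathbf{X}$, and the vanishing of the right shelf uses the uniform (in the truncation point $x_0$) upper bound of Lemma \ref{nu} and Remark \ref{trunc}, not merely the bound for the problem at $x_0=0$.
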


\begin{theorem} \label{bc3_theorem} 
	For equation (\ref{mainL}), let Assumption {\bf (A4)} hold, 
	and let $\mathbf{X}$ and $\mathbf{X}_D$ denote
	the Lagrangian frames corresponding to the unstable subpsace $E_-^u(x,\lambda)$ and the Dirichlet subspace $\mathcal{D}$, respectively.  If 
	$\mathcal{N} (\lambda)$ denotes the spectral
	count for (\ref{main}) (the number of real eigenvalues that are greater than $\lambda$),
	then 
	\begin{equation*}
	\mathcal{N} (\lambda)
	=\sum_{-\infty < x <L} \dim (E_-^u(x,\lambda) \cap \mathcal{D})= \sum_{x \in (-\infty,L)} \dim \ker (\mathbf{X} (x; \lambda)^* J \mathbf{X}_D),\,\,\lambda\geq0.
	\end{equation*}
\end{theorem}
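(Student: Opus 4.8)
The plan is to set up a ``crossing form'' / Maslov-box argument on the whole-line truncated problem \eqref{mainL}, exploiting the monotonicity in the spectral parameter $\lambda$ that is forced by Assumption {\bf (A4)} together with the sign conditions $f_1>0$, $f_2\geq\delta>0$. First I would rewrite \eqref{mainL} as a first-order Hamiltonian system $u'=\mathbb{A}(x;\lambda)u$ for $u={y\choose y'}$, whose coefficient matrix is of the form $J^{-1}B(x;\lambda)$ with $B$ Hermitian, so that the stable/unstable subspaces evolve as Lagrangian planes; this uses Lemma \ref{essspecL} (and Assumption {\bf (A4)}) to guarantee that for $\lambda$ in the region $\Omega$ containing the closed right half-plane the asymptotic coefficient matrix $\mathbb{A}_-(\lambda)$ is hyperbolic, so the unstable bundle $E_-^u(x,\lambda)$ is well-defined, analytic in $\lambda$, and admits a continuous Lagrangian frame $\mathbf X(x;\lambda)$ that limits as $x\to-\infty$ to the unstable eigenspace of $\mathbb A_-(\lambda)$. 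The eigenvalue condition for \eqref{mainL} at a given $\lambda\geq 0$ is then exactly $E_-^u(x,\lambda)\cap\mathcal D\neq\{0\}$ at $x=L$ (equivalently $\ker(\mathbf X(L;\lambda)^*J\mathbf X_D)\neq 0$), where $\mathcal D=\colspan{0\choose I_n}$ is the Dirichlet plane, and the algebraic multiplicity of the eigenvalue equals the dimension of this intersection — a fact I would record as a preliminary lemma, proved by relating Jordan chains of the pencil $\mathcal L_L(\lambda)$ to the $\lambda$-derivatives of the frame, again using $f_2\geq\delta>0$ to rule out spurious degeneracies.

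The core of the argument is a homotopy in the $(\lambda)$–$(x)$ rectangle (the ``Maslov box''): consider the pair of Lagrangian paths $\big(E_-^u(x;\lambda),\mathcal D\big)$ along the boundary of the box $[-\infty,L]\times[\lambda,\Lambda]$ for $\Lambda$ large. By homotopy invariance of the Maslov index the signed intersection count around the closed boundary is zero. I would then evaluate the four sides: (i) on the far side $x=-\infty$ there are no crossings, since $E_-^u(-\infty;\lambda)$ is the fixed unstable eigenspace of $\mathbb A_-(\lambda)$, which one checks is transverse to $\mathcal D$ because the relevant matrix is $\sqrt{\lambda f_{1-}+\lambda^2 f_{2-}-V_-}$ and hence invertible; (ii) on the far top side $\lambda=\Lambda$ there are no crossings for $\Lambda$ sufficiently large, since {\bf (A4)} pushes all spectrum of $\mathcal L_L$ into $\Re\lambda\leq\gamma<0$ so \eqref{mainL} has no real eigenvalue $\geq\Lambda$ — this is where one must be careful, using the uniform hyperbolicity estimates behind Lemma \ref{essspecL} to get a $\Lambda$ that works; (iii) on the side $x=L$ the crossings are precisely the eigenvalues of \eqref{mainL} in $[\lambda,\Lambda]$, counted with multiplicity, and I would show via the crossing form that the monotonicity in $\lambda$ (coming from $f_1>0$, $f_2\geq\delta>0$, so that $\partial_\lambda B$ is sign-definite on the relevant subspace for $\lambda\geq 0$) makes every such crossing have the same sign, so this side contributes $\pm\mathcal N(\lambda)$; (iv) the remaining side $\lambda$ fixed, $x\in[-\infty,L]$ contributes exactly $\mas(E_-^u(\cdot;\lambda),\mathcal D;[-\infty,L])$, which under the same monotonicity-in-$x$ computation equals $\sum_{-\infty<x<L}\dim(E_-^u(x,\lambda)\cap\mathcal D)$. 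Collecting signs yields the stated identity.

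The step I expect to be the main obstacle is controlling the ``corner'' and the top side $\lambda=\Lambda$: one needs that \emph{all} real eigenvalues of the truncated problem lie in a bounded interval uniformly, and that no crossing escapes to $x=-\infty$ or $\lambda=+\infty$ along the homotopy. This requires quantitative hyperbolicity of $\mathbb A_-(\lambda)$ on the whole real half-line $\lambda\geq 0$ — large-$\lambda$ asymptotics of the eigenvalues of $-\mu^2+V_\pm-\lambda f_{1\pm}-\lambda^2 f_{2\pm}$ combined with {\bf (A4)} — and a compactness/continuity argument for the frame $\mathbf X$ up to $x=-\infty$; the fact that the problem is posed on a half-line $(-\infty,L]$ rather than a compact interval means this ``closing up at $-\infty$'' must be justified rather than taken for granted, and I would handle it by the exponential convergence $\mathbf X(x;\lambda)\to$ unstable eigenspace of $\mathbb A_-(\lambda)$ guaranteed by $V-V_\pm, f_j-f_{j\pm}\in L^1$. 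A secondary technical point is establishing the sign of the crossing form at $x=L$: one must show $\partial_\lambda$ of the relevant sesquilinear form is negative (resp. positive) definite on $E_-^u(L;\lambda)\cap\mathcal D$ for $\lambda\geq 0$, which is where the hypotheses $f_1>0$ and $f_2\geq\delta>0$ enter decisively and which also explains why the conclusion is restricted to $\lambda\geq 0$.
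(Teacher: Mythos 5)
Your overall strategy is the same Maslov-box argument the paper uses, but two of the four sides of the box are not actually controlled by the reasons you give, and these are precisely the places where the structural hypotheses enter. First, on the side $\lambda=\Lambda$ (your item (ii)): a crossing there at an interior point $x_0<L$ is a nontrivial solution of the truncated Dirichlet problem on $(-\infty,x_0]$, so what you need is an upper bound on the real eigenvalues of the \emph{whole family} of truncations, uniform in the right endpoint $x_0\le L$, not just a bound for $\mathcal{L}_L$ itself. Your stated justification — that {\bf (A4)} pushes ``all spectrum of $\mathcal{L}_L$'' into $\Re\lambda\le\gamma<0$ — is false: {\bf (A4)} constrains only the essential spectrum, and nonnegative point spectrum is exactly what $\mathcal{N}(\lambda)$ counts. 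The substitute you propose (quantitative hyperbolicity of the asymptotic matrix, large-$\lambda$ asymptotics of $-\mu^2+V_\pm-\lambda f_{1\pm}-\lambda^2f_{2\pm}$) also does not deliver the bound, since hyperbolicity at spatial infinity says nothing about eigenvalues of the truncated boundary-value problems. The paper obtains the needed bound by an elementary energy identity: pairing the equation with $y$ gives $(f_2y,y)\lambda^2+(f_1y,y)\lambda+(y',y')-(Vy,y)=0$, hence every real eigenvalue of every truncation satisfies $\lambda\le\sqrt{\|f_2\|_{L^\infty}\|V\|_{L^\infty}}/\delta$, independently of the endpoint (Lemma \ref{Vinf}, Remark \ref{unifL}); taking $\lambda_\infty$ above this value annihilates that entire side.

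Second, on the fixed-$\lambda$ side (your item (iv)) you appeal to ``the same monotonicity-in-$x$ computation,'' but no such computation exists: the $\lambda$-monotonicity comes from $\mathbf{X}^*J\partial_\lambda\mathbf{X}=-\int_{-\infty}^{x}X^*(f_1+2\lambda f_2)X\,dy$ being sign definite, whereas $\mathbf{X}^*J\partial_x\mathbf{X}=\mathbf{X}^*\mathbb{B}\mathbf{X}$ is \emph{not} sign definite (the block $V-\lambda f_1-\lambda^2f_2$ has no sign), so Lemma \ref{monotonicity1} cannot be applied in $x$. The paper instead evaluates the crossing form restricted to $\ker(\tilde W_D(x_*)+I)$: because the relevant vectors $(X-iY)^{-1}w$ lie in $\ker X(x_*)$, the form collapses to $-2\,\|Y(x_*)(X-iY)^{-1}w\|^2<0$, and this strict negativity (nondegeneracy follows from the frame having full rank) is what forces every $x$-crossing to count with a single sign; without it your identification of that side with $\sum_{x}\dim(E^u_-(x,\lambda)\cap\mathcal{D})$ is unsupported. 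Finally, a bookkeeping point you gesture at but do not close: the side $x=L$ counts geometric multiplicities while $\mathcal{N}$ counts algebraic ones, and the corner contribution $\dim\ker\mathcal{L}_L(\lambda)$ at $(L,\lambda)$ must cancel between the two adjacent sides; the paper does this by proving that nonnegative eigenvalues of $\mathcal{L}_L$ are semisimple (using $(f_1+2\lambda_0f_2)>0$ for $\lambda_0\ge0$) and tracking the $\dim\ker\mathcal{L}_L(\lambda)$ terms explicitly, which is the role your ``preliminary lemma'' would need to play in detail.
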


\begin{theorem} \label{bc1_theorem} 
For equation (\ref{main}), let Assumption {\bf (A4)} hold, 
and let $\mathbf{X}$ and $\mathbf{X}_D$ denote
the Lagrangian frames corresponding to the unstable subpsace $E_-^u(x,\lambda)$ and the Dirichlet subspace $\mathcal{D}$, respectively.  If 
$\mathcal{N} (\lambda)$ denotes the spectral
count for (\ref{main}) (the number of real eigenvalues that are greater than $\lambda$),
then 
\begin{equation*}
\mathcal{N} (\lambda)
=\sum_{x\in\BbbR} \dim (E_-^u(x,\lambda) \cap \mathcal{D})= \sum_{x \in \BbbR} \dim \ker (\mathbf{X} (x; \lambda)^* J \mathbf{X}_D),\,\,\lambda\geq0.
\end{equation*}
\end{theorem}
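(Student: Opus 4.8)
The plan is to deduce the whole-line result from the half-line truncated result of Theorem \ref{bc3_theorem} by a limiting argument in $L\to+\infty$, combined with a symmetric treatment at $x\to+\infty$. First I would observe that, by Assumption \textbf{(A4)}, for $\lambda$ in the region $\Omega$ (in particular for $\lambda\geq 0$) the asymptotic systems at both $\pm\infty$ are hyperbolic with no center direction, so there exist well-defined stable subspace $E_+^s(x,\lambda)$ at $+\infty$ and unstable subspace $E_-^u(x,\lambda)$ at $-\infty$, each an $n$-dimensional Lagrangian subspace depending analytically on $\lambda$ and continuously (indeed $C^1$) in $x$, with exponential convergence to the constant-coefficient limits $E_{\pm}(\lambda)$. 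A complex number $\lambda\in\Omega$ is an eigenvalue of $\mathcal{L}(\lambda)$, with algebraic multiplicity equal to the geometric one at real $\lambda$ by self-adjointness-type arguments (here the pencil is not self-adjoint but the relevant crossing quantity is still governed by a Hermitian form, see below), precisely when $E_-^u(x,\lambda)\cap E_+^s(x,\lambda)\neq\{0\}$, i.e. when the frames $\mathbf X(x;\lambda)$ and $\mathbf X_+(x;\lambda)$ have a nontrivial kernel intersection.

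The core is a homotopy in the spectral parameter. Fix a large $\Lambda>0$ beyond all real eigenvalues (possible by Lemma for the whole line: $\Omega\setminus\sigma_{ess}$ contains $[0,\infty)$ and eigenvalues are isolated of finite multiplicity, and by \textbf{(A4)} there is a uniform bound $\Re\lambda\le\gamma<0$ forcing large real $\lambda$ to be resolvent points — more carefully, one shows via energy estimates using $f_1>0$, $f_2\ge\delta>0$ that there are no real eigenvalues above some $\Lambda$). Consider the path $\lambda\mapsto$ (the pair of Lagrangian paths $x\mapsto E_-^u(x,\lambda)$ and $x\mapsto E_+^s(x,\lambda)$ on $x\in\mathbb R$) as $\lambda$ runs over $[\lambda,\Lambda]$, together with the $x$-evolution at the two endpoints. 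The Maslov index is a homotopy invariant along the boundary of this rectangle in $(x,\lambda)$-space, so
\[
\mas(E_-^u,E_+^s;\text{bottom }\lambda)+\mas(\cdot;\text{right }x=+\infty)=\mas(\cdot;\text{top }\lambda=\Lambda)+\mas(\cdot;\text{left }x=-\infty).
\]
At $\lambda=\Lambda$ there are no crossings, and at $x=\pm\infty$ the subspaces are the constant spaces $E_\pm(\lambda)$ which, by the spectral gap in \textbf{(A4)}, never intersect for $\lambda\ge 0$, so those three contributions vanish; hence $\mathcal N(\lambda)$, which counts the crossings along the bottom $\lambda$-segment with signs, equals $\mas(E_-^u(\cdot;\lambda),E_+^s(\cdot;\lambda);[-\infty,\infty])$ up to sign. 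One then invokes \emph{monotonicity}: along the $\lambda$-direction all crossings have the same sign — this is where the structural hypotheses $f_1>0$, $f_2\ge\delta>0$ enter, giving $\partial_\lambda$ of the relevant crossing form a definite sign (this is the quadratic-pencil analogue of the usual monotonicity in $\lambda$ for linear problems, and is essentially the same computation as in Theorem \ref{bc2_theorem}); likewise the $x$-crossings along $E_-^u$ versus the Dirichlet frame $\mathbf X_D$ are monotone. Rewriting $E_+^s$ in terms of the Dirichlet subspace $\mathcal D=\colspan\binom{0}{I_n}$ via the standard trick — a crossing $E_-^u(x)\cap E_+^s(x)\ne\{0\}$ at parameter $\lambda$ corresponds, after using the symplectic structure and the fact that solutions decaying at $+\infty$ that also vanish at some $x=x_0$ pull back to Dirichlet crossings of the left frame, or more directly by letting $L\to\infty$ in Theorem \ref{bc3_theorem} — yields the stated formula $\mathcal N(\lambda)=\sum_{x\in\mathbb R}\dim(E_-^u(x,\lambda)\cap\mathcal D)$.

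Concretely, the cleanest route is: (i) apply Theorem \ref{bc3_theorem} to get, for every $L$, $\mathcal N_L(\lambda)=\sum_{x<L}\dim\ker(\mathbf X(x;\lambda)^*J\mathbf X_D)$ for the truncated operator $\mathcal L_L(\lambda)$; (ii) show $\mathcal N_L(\lambda)\to\mathcal N(\lambda)$ as $L\to\infty$ — eigenvalues of $\mathcal L_L$ converge to those of $\mathcal L$ with multiplicity, using exponential convergence of $E_-^u(x,\lambda)$ to $E_-(\lambda)$ and the spectral gap to control what happens near $x=L$ (no eigenvalue escapes to or emerges from $+\infty$); (iii) pass to the limit in the conjugate-point sum, noting that for $x$ large the frame $\mathbf X(x;\lambda)$ is exponentially close to a frame for $E_-(\lambda)$, which has no Dirichlet crossing for $\lambda\ge 0$ (again by \textbf{(A4)}), so only finitely many terms are nonzero and the sum stabilizes. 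The main obstacle I expect is step (ii), the spectral convergence of the truncated problems: one must rule out spurious eigenvalues of $\mathcal L_L(\lambda)$ accumulating on $[\lambda,\infty)$ as $L\to\infty$ and show no genuine eigenvalue is lost, which requires a uniform (in $L$) resolvent or Fredholm estimate near $x=+\infty$ — this is exactly the kind of estimate underlying Lemma \ref{essspecL}, and it must be made quantitative enough to survive the limit, handling the $\lambda$-dependence of the comparison carefully given that the pencil is quadratic rather than linear in $\lambda$.
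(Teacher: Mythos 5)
Your overall route is the same as the paper's: truncate at $x=L$, apply Theorem \ref{bc3_theorem}, and pass to the limit $L\to\infty$. But the crux of that route is precisely your step (ii) --- that for $L$ sufficiently large the nonnegative real eigenvalue count of $\mathcal{L}_L$ coincides, with multiplicity, with that of $\mathcal{L}$ --- and you do not prove it; you only flag it as ``the main obstacle'' and gesture at a uniform-in-$L$ resolvent/Fredholm estimate ``of the kind underlying Lemma \ref{essspecL}.'' That lemma concerns Fredholmness of a fixed operator pencil and gives no uniformity in $L$, so as written this is a genuine gap, not a routine verification. The paper closes it by a quite different mechanism, following Sandstede--Scheel \cite{SaS00}: with $\phi(x_1,x_2;\lambda)$ the propagator, one compares the analytic functions $\mathcal{D}_L(\lambda)=\phi(0,L;\lambda)\mathcal{D}\wedge\mathbb{E}_-^u(0,\lambda)$ and $\mathcal{D}_\infty(\lambda)=\mathbb{E}_+^s(0,\lambda)\wedge\mathbb{E}_-^u(0,\lambda)$. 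Since $\mathcal{D}$ is transverse to $\mathbb{E}_+^u(+\infty,\lambda)$ and the stable/unstable bundles converge exponentially to their asymptotic limits, the backward-evolved Dirichlet plane $\phi(0,L;\lambda)\mathcal{D}$ is $e^{-\sigma_+L}$-close to $\mathbb{E}_+^s(0,\lambda)$, so $\mathcal{D}_L$ and $\mathcal{D}_\infty$ have the same zeros counting multiplicity for large $L$ (a Rouch\'e-type argument, \cite[Rmk.\ 4.3]{SaS00}); finally $\mathcal{D}_L(\lambda)=[\det\phi(0,L;\lambda)]\,\mathcal{D}\wedge\mathbb{E}_-^u(L,\lambda)$ identifies the zeros of $\mathcal{D}_L$ with the eigenvalues of the Dirichlet problem on $(-\infty,L]$. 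Some argument of this quantitative, Evans-function type (or an equivalent spectral-convergence theorem you would have to prove) is indispensable; without it the limit $\mathcal{N}_L(\lambda)\to\mathcal{N}(\lambda)$ is unsupported.

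Two smaller points. In your step (iii), the claim that for large $x>0$ the frame $\mathbf{X}(x;\lambda)$ is exponentially close to a frame for $E_-(\lambda)$ is incorrect: for $x\to+\infty$ the relevant asymptotic matrix is $A_+(\lambda)$, and $E_-^u(x,\lambda)$ aligns (when $\lambda$ is not an eigenvalue) with the unstable subspace at $+\infty$, not with $E_-(\lambda)$; the finiteness/stabilization of the conjugate-point sum for $x\ge L$ is most cleanly obtained from the $L$-independence of $\mathcal{N}_L$ itself, i.e.\ again from the Sandstede--Scheel step. Your opening Maslov-box sketch with $E_-^u$ and $E_+^s$ on the whole line is a plausible alternative strategy, but as stated its corner/limit shelves (e.g.\ the behavior of $E_+^s(x,\lambda)$ as $x\to-\infty$) are not justified, and you yourself abandon it for the truncation route, so it does not repair the gap.
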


{\bf Typical examples of the eigenvalue curves} \\

\begin{figure}
	\centering
	\begin{subfigure}{0.5\textwidth}
		\centering
		\includegraphics[width=.8\linewidth]{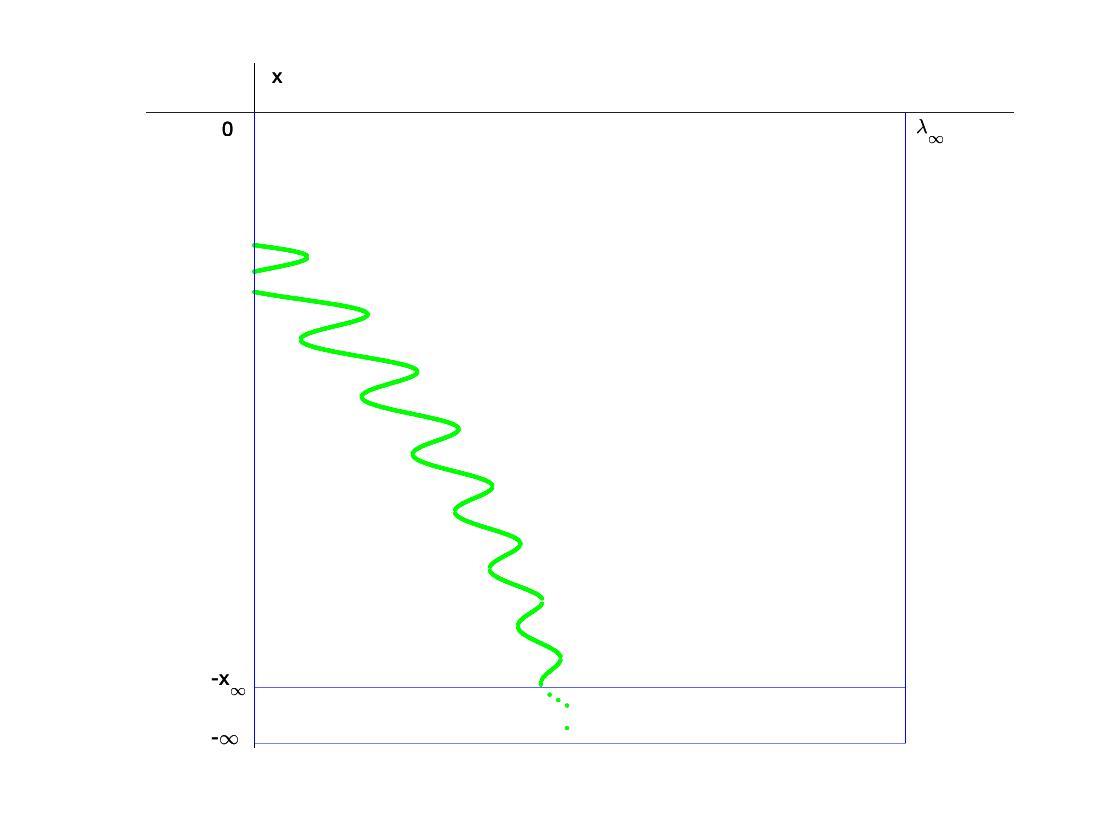}
		\label{fig:sub1}
	\end{subfigure}%
	\begin{subfigure}{.5\textwidth}
		\centering
		\includegraphics[width=0.95\linewidth]{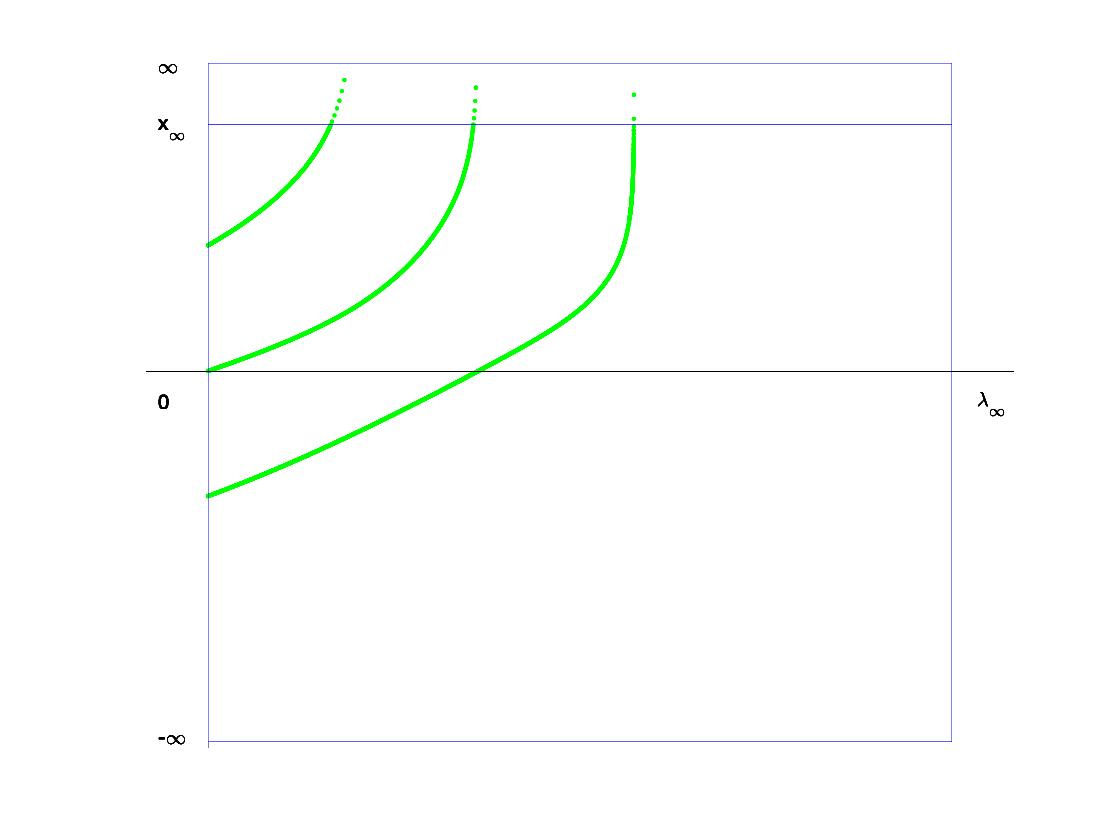}
	\end{subfigure}
	\caption{Eigenvalue curves }\label{scalar}
\end{figure}

{\bf Example 1} (half-line, scalar)
We consider the potentials\\ $V(x)=-1-(815+219\cos(1.8x))e^{0.1x} $, $f_1=1$, $f_2=2$ along with the boundary condition $(18-9\lambda)y(0)-y'(0)=0$. In this case, we see
the emergence of an eigenvalue from the bottom shelf, and we notice a very distinct loss of the monotonicity. See the left-half of Figure \ref{scalar}. The Maslov Index in this case is
$1$, the Morse index of $\sqrt{-V_-}-c$ is $1$, and according to \ref{bc2_theorem}, 
this means that $\mathcal{N} (0)=0$  (the number of real eigenvalues for the problem \eqref{main_s} that are greater than $0$).

{\bf Example 2} (full-line, scalar)
We consider the potentials $V(x)=-1+1.8e^{-0.06|x|}$, $f_1=1$, $f_2=2$. In this case, there
can be no crossings along the bottom shelf, and indeed the only allowable behavior is for the
eigenvalue curves to enter the box through the curve $\lambda=0$ and move upward until reaching the curve $x=\infty$.
See the right-half of Figure \ref{scalar}.  We ran our numerics up to some big positive value $x_{\infty}$. The number of intersections of the unstable subpsace $E_-^u(x,0)$ and the Dirichlet subspace $\mathcal{D}$ is 3, and according to Theorem \ref{bc1_theorem}
this means that $\mathcal{N} (0)=3$  (the number of real eigenvalues for the problem \eqref{main} that are greater than $0$).

{\bf Example 3} (half-line, $2\times2$ system)
We consider the potentials\\ $V(x)=\begin{pmatrix}
-1-(815+219\cos(1.8x))e^{0.1x} & 0 \\
0 & -1-(255+0.1\cos(0.5x))e^{0.15x}
\end{pmatrix}$, $f_1=I$ and $f_2=2I$ along with the boundary matrices $\phi(\lambda)=\begin{pmatrix}
-9\lambda & 0 \\
0 & -9\lambda
\end{pmatrix}$ and $c=\begin{pmatrix}
18 & 2 \\
2 & 25
\end{pmatrix}$. Note that the coupling appears via the matrix $c$. See the left-half of Figure \ref{matrix}. The Maslov Index in this case is
$1$, the Morse index of $\sqrt{-V_-}-c$ is $2$, and according to \ref{bc2_theorem}, 
this means that $\mathcal{N} (0)=1$  (the number of real eigenvalues for the problem \eqref{main_s} that are greater than $0$).

{\bf Example 4} (full-line, $2\times2$ system)
We consider the potentials\\ $V(x)=\begin{pmatrix}
-1+1.93e^{-0.141|x|} & 0.5 \\
0.5 & -1+1.93e^{-0.141|x|}
\end{pmatrix}$, $f_1=I$ and $f_2=2I$. Note that the coupling appears via the potential $V$. See the right-half of Figure \ref{matrix}. The number of intersections of the unstable subpsace $E_-^u(x,0)$ and the Dirichlet subspace $\mathcal{D}$ is 5, and according to Theorem \ref{bc1_theorem}
this means that $\mathcal{N} (0)=5$  (the number of real eigenvalues for the problem \eqref{main} that are greater than $0$). Also, note that for the systems, the eigenvalue curves might intersect which can be observed for our particular $2\times2$ system.
\begin{figure}
	\centering
	\begin{subfigure}{0.5\textwidth}
		\centering
		\includegraphics[width=.8\linewidth]{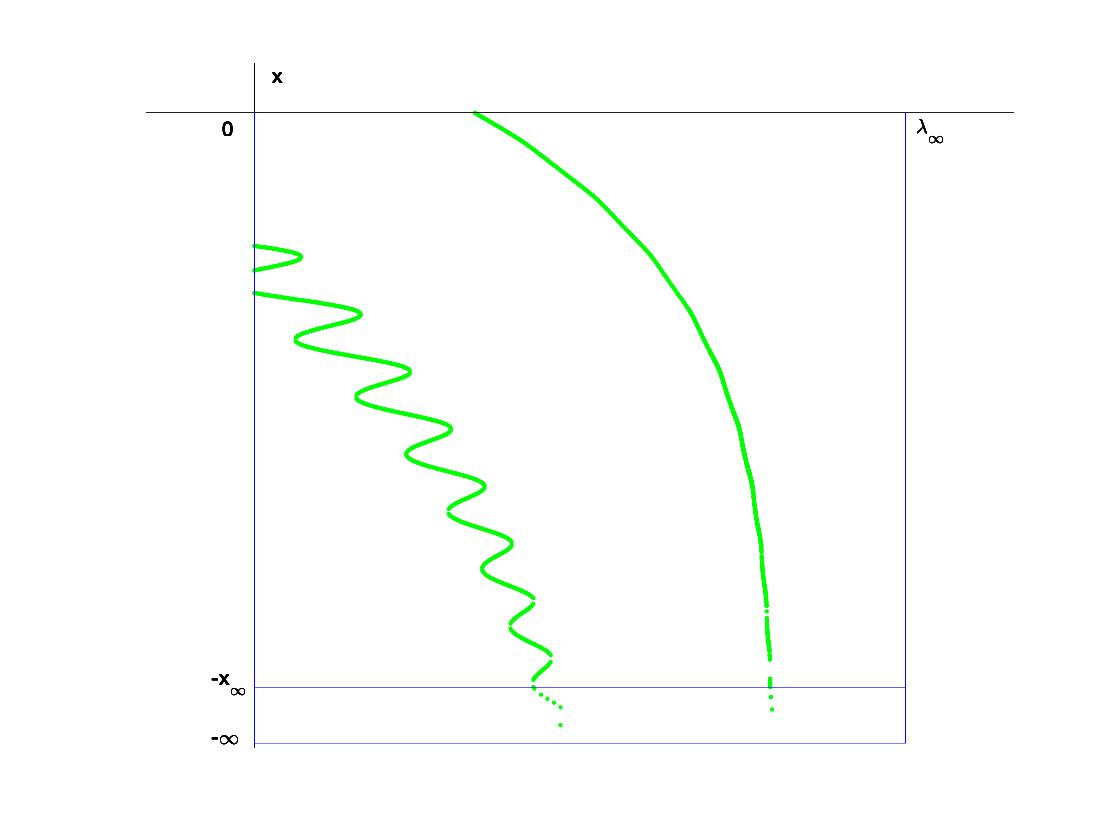}
	\end{subfigure}%
	\begin{subfigure}{.5\textwidth}
		\centering
		\includegraphics[width=0.95\linewidth]{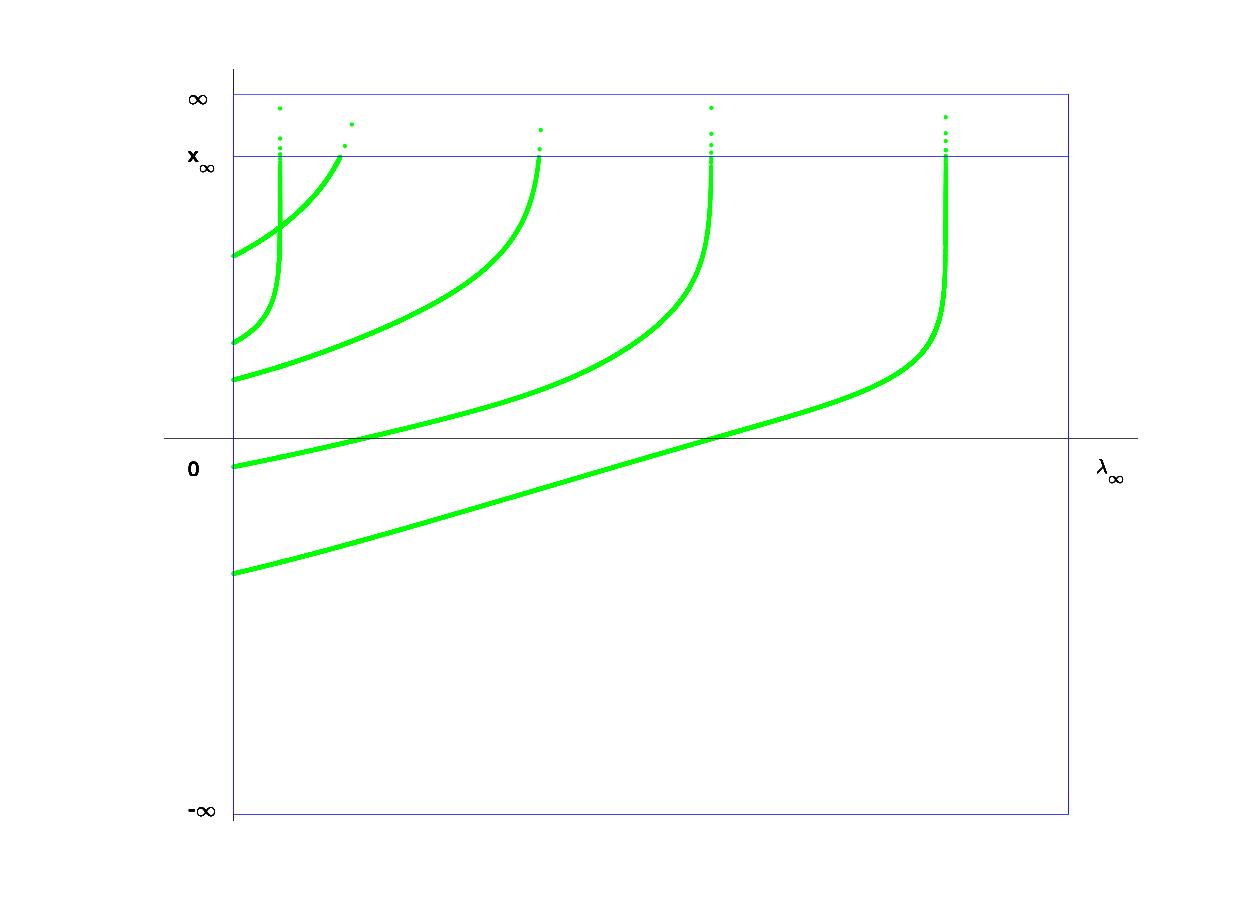}
	\end{subfigure}
	\caption{Eigenvalue curves }\label{matrix}
\end{figure}

\subsection{Reality of eigenvalues}\label{s:reality}
The above theorems concern only the real spectrum of the associated operator pencil.
However, adapting an argument of \cite[Lemma 4.1]{SYZ18} similar to that for the classic linear pencil case,
we may readily see that nonstable spectra $\Re \lambda \geq 0$ of the whole-line problem are necessarily real, hence
our conclusions are decisive for stability.  Likewise, for the half-line problem, unstable spectra are real under
the additional (sharp, see Remark \ref{phirmk}) assumption that $\Im \phi(\lambda)\Im \lambda < 0$ for $\Re \lambda \geq 0$;
	see Lemma \ref{nocomplex}.

\section{The Maslov Index on $\mathbb{C}^{2n}$} \label{maslov_section}

Suppose $\mathbf{X}= {X \choose Y}$,
$\mathbf{X}_D = {0 \choose I_n}$ and $\mathbf{X}_\phi = {I_n \choose c+ \phi(\lambda)}$ respectively denote frames for 
Lagrangian subspaces  $\ell(x,\lambda):=E_-^u(x,\lambda)$, the Dirichlet subspace $\mathcal{D}$ and the $\Phi(\lambda)$ subspace. We now set 
\begin{align}\label{WD}
\begin{split}
\tilde{W}_D :&= (X + iY)(X-iY)^{-1},\\
\tilde{W}_\phi :&= -(X + iY)(X-iY)^{-1}(X_\phi - iY_\phi)(X_\phi+iY_\phi)^{-1},\\
\end{split}
\end{align}
noting that $\tilde{W}_D$ and $\tilde{W}_\phi$ detect intersections of $\ell = \colspan(\mathbf{X})=E_-^u(x,\lambda)$ 
with the Dirichlet subspace and the $\phi$ subspace, respectively. Moreover,

\begin{align*}
\begin{split}
\dim \ker (\mathbf{X}^* J \mathbf{X}_D) &= \dim \ker (\tilde{W}_D + I),\\
\dim \ker (\mathbf{X}^* J \mathbf{X}_\phi) &= \dim \ker (\tilde{W}_\phi + I),\\
\end{split}
\end{align*}

In general, given any two Lagrangian subspaces $\ell_1$ and $\ell_2$, with associated 
frames $\mathbf{X}_1 = {X_1 \choose Y_1}$ and 
$\mathbf{X}_2 = {X_2 \choose Y_2}$, we can define the complex $n \times n$
matrix 
\begin{equation} \label{tildeW}
\tilde{W} = - (X_1 + i Y_1) (X_1 - i Y_1)^{-1} (X_2 - i Y_2) (X_2 + i Y_2)^{-1}.
\end{equation}

Given two continuous maps $\ell_1 (t), \ell_2(t)$ on a parameter
interval $\mathcal{I}$, we denote by $\mathcal{L}(t)$ the path 
\begin{equation*}
\mathcal{L} (t) = (\ell_1 (t), \ell_2(t)).
\end{equation*} 
In what follows, we will define the Maslov index for the path 
$\mathcal{L} (t)$, which will be a count, including both multiplicity
and direction, of the number of times the Lagrangian paths
$\ell_1$ and $\ell_2$ intersect. In order to be clear about 
what we mean by multiplicity and direction, we observe that 
associated with any path $\mathcal{L} (t)$ we will have 
a path of unitary complex matrices as described in (\ref{tildeW}).
We have already noted that the Lagrangian subspaces $\ell_1$
and $\ell_2$ intersect at a value $t_* \in \mathcal{I}$ if and only 
if $\tilde{W} (t_*)$ has -1 as an eigenvalue. (Recall that we refer to the
value $t_*$ as a {\it conjugate point}.) In the event of 
such an intersection, we define the multiplicity of the 
intersection to be the multiplicity of -1 as an eigenvalue of 
$\tilde{W}$ (since $\tilde{W}$ is unitary the algebraic and geometric
multiplicites are the same). When we talk about the direction 
of an intersection, we mean the direction the eigenvalues of 
$\tilde{W}$ are moving (as $t$ varies) along the unit circle 
$S^1$ when they cross $-1$ (we take counterclockwise as the 
positive direction). We note that we will need to take care with 
what we mean by a crossing in the following sense: we must decide
whether to increment the Maslov index upon arrival or 
upon departure. Indeed, there are several different approaches 
to defining the Maslov index (see, for example, \cite{CLM, rs93}), 
and they often disagree on this convention. 

Following \cite{BF98, F, P96} (and in particular Definition 1.5 
from \cite{BF98}), we proceed by choosing a 
partition $a = t_0 < t_1 < \dots < t_n=b$ of $\mathcal{I} = [a,b]$, along 
with numbers $\epsilon_j \in (0,\pi)$ so that 
$\ker\big(\tilde{W} (t) - e^{i (\pi \pm \epsilon_j)} I\big)=\{0\}$
for $t_{j-1} \le t \le t_j$; 
that is, $e^{i(\pi \pm \epsilon_j)} \in \mathbb{C} \setminus \sigma(\tilde{W} (t))$, 
for $t_{j-1} \le t \le t_j$ and $j=1,\dots,n$. 
Moreover, we notice that for each $j=1,\dots,n$ and any 
$t \in [t_{j-1},t_j]$ there are only 
finitely many values $\beta \in [0,\epsilon_j)$ 
for which $e^{i(\pi+\beta)} \in \sigma(\tilde{W} (t))$.

Fix some $j \in \{1, 2, \dots, n\}$ and consider the value
\begin{equation} \label{kdefined}
k (t,\epsilon_j) := 
\sum_{0 \leq \beta < \epsilon_j}
\dim \ker \big(\tilde{W} (t) - e^{i(\pi+\beta)}I \big).
\end{equation} 
for $t_{j-1} \leq t \leq t_j$. This is precisely the sum, along with multiplicity,
of the number of eigenvalues of $\tilde{W} (t)$ that lie on the arc 
\begin{equation*}
A_j := \{e^{i t}: t \in [\pi, \pi+\epsilon_j)\}.
\end{equation*}
The stipulation that 
$e^{i(\pi\pm\epsilon_j)} \in \mathbb{C} \setminus \sigma(\tilde{W} (t))$, for 
$t_{j-1} \le t \le t_j$
ensures that no eigenvalue can enter $A_j$ in the clockwise direction 
or exit in the counterclockwise direction during the interval $t_{j-1} \le t \le t_j$.
In this way, we see that $k(t_j, \epsilon_j) - k (t_{j-1}, \epsilon_j)$ is a 
count of the number of eigenvalues that enter $A_j$ in the counterclockwise 
direction (i.e., through $-1$) minus the number that leave in the clockwise direction
(again, through $-1$) during the interval $[t_{j-1}, t_j]$.


In dealing with the catenation of paths, it's particularly important to 
understand the difference $k(t_j, \epsilon_j) - k (t_{j-1}, \epsilon_j)$ if an 
eigenvalue resides at $-1$ at either $t = t_{j-1}$
or $t = t_j$ (i.e., if an eigenvalue begins or ends at a crossing). If an eigenvalue 
moving in the counterclockwise direction 
arrives at $-1$ at $t = t_j$, then we increment the difference forward, while if 
the eigenvalue arrives at -1 from the clockwise direction we do not (because it
was already in $A_j$ prior to arrival). On
the other hand, suppose an eigenvalue resides at -1 at $t = t_{j-1}$ and moves
in the counterclockwise direction. The eigenvalue remains in $A_j$, and so we do not increment
the difference. However, if the eigenvalue leaves in the clockwise direction 
then we decrement the difference. In summary, the difference increments forward upon arrivals 
in the counterclockwise direction, but not upon arrivals in the clockwise direction,
and it decrements upon departures in the clockwise direction, but not upon 
departures in the counterclockwise direction.      

We are now ready to define the Maslov index.

\begin{definition} \label{dfnDef3.6}  
Let $\mathcal{L} (t) = (\ell_1 (t), \ell_2(t))$, where 
$\ell_1, \ell_2:\mathcal{I} \to \Lambda (n)$ 
are continuous paths in the Lagrangian--Grassmannian. 
The Maslov index $\mas(\mathcal{L}; \mathcal{I})$ is defined by
\begin{equation}
\mas(\mathcal{L}; \mathcal{I})=\sum_{j=1}^n(k(t_j,\epsilon_j)-k(t_{j-1},\epsilon_j)).
\end{equation}
\end{definition}

\begin{remark}
As we did in the introduction, we will typically refer explicitly to 
the individual paths with the notation $\mas (\ell_1, \ell_2; \mathcal{I})$.
\end{remark}

\begin{remark} As discussed in \cite{BF98}, the Maslov index does not depend
on the choices of $\{t_j\}_{j=0}^n$ and $\{\epsilon_j\}_{j=1}^n$, so long as 
these choices follow the specifications described above. 
\end{remark}

\subsection{Direction of Rotation} \label{rotation_section}

As noted in the previous section, the direction we associate with a 
conjugate point is determined by the direction in which eigenvalues
of $\tilde{W}$ rotate through $-1$ (counterclockwise is positive, 
while clockwise is negative). When analyzing the Maslov index, we 
need a convenient framework for analyzing this direction, and 
the development of such a framework is the goal of this section. 

\begin{lemma}[\cite{HS18}] \label{monotonicity1}
	Suppose $\ell_1, \ell_2: I \to \Lambda (n)$ denote paths of 
	Lagrangian subspaces of $C^{2n}$ with absolutely continuous frames $\mathbf{X}_1 = {X_1 \choose Y_1}$
	and $\mathbf{X}_2 = {X_2 \choose Y_2}$ (respectively). If there exists 
	$\delta > 0$ so that the matrices 
	\begin{equation*}
	- \mathbf{X}_1^* J \mathbf{X}_1' = X_1 (t)^* Y_1' (t) - Y_1 (t)^* X_1'(t)
	\end{equation*}
	and (noting the sign change)
	\begin{equation*}
	\mathbf{X}_2^* J \mathbf{X}_2' = - (X_2 (t)^* Y_2' (t) - Y_2 (t)^* X_2'(t))
	\end{equation*}
	are both a.e.-non-negative in $(t_0-\delta,t_0+\delta)$, and at least one is 
	a.e.-positive definite then the eigenvalues of $\tilde{W} (t)$ rotate in the 
	counterclockwise direction as $t$ increases through $t_0$. 
	Likewise, if both of these matrices are a.e.-non-positive, and at least one is 
	a.e.-negative definite, 
	then the eigenvalues of $\tilde{W} (t)$ rotate in the clockwise direction as 
	$t$ increases through $t_0$.
\end{lemma}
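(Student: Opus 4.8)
The final statement to prove is Lemma~\ref{monotonicity1}, attributed to \cite{HS18}, on the direction of rotation of the eigenvalues of $\tilde{W}(t)$ through $-1$.

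\medskip

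\textbf{Proof proposal.}
The plan is to reduce the rotation direction to the sign of a Hermitian "crossing form" evaluated on the eigenvectors of $\tilde W(t_0)$ at $-1$, which is the standard Robbin--Salamon / Bott--Furutani approach, and then to match this crossing form to the two symplectic quadratic forms $-\mathbf X_1^*J\mathbf X_1'$ and $\mathbf X_2^*J\mathbf X_2'$ appearing in the statement. First I would set up the unitary matrix $\tilde W(t) = -(X_1+iY_1)(X_1-iY_1)^{-1}(X_2-iY_2)(X_2+iY_2)^{-1}$ and record that it is well defined near $t_0$: the factors $(X_j\mp iY_j)$ are invertible because $\mathbf X_j$ is a Lagrangian frame (if $(X_j - iY_j)v = 0$ then $v^*(X_j^*Y_j - Y_j^*X_j)v = \mp i|X_jv|^2 \mp i |Y_jv|^2$, forcing $v=0$ using the Lagrangian relation $X_j^*Y_j = Y_j^*X_j$ together with $(X_j-iY_j)v=0$). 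I would write $\tilde W = -\mathcal U_1 \mathcal U_2^{-1}$ with $\mathcal U_j := (X_j+iY_j)(X_j-iY_j)^{-1}$ the Cayley transform of $\ell_j$, each of which is unitary when $\ell_j$ is Lagrangian.

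\medskip

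Next, for a crossing $t_0$, let $v$ be a unit vector with $\tilde W(t_0) v = -v$, equivalently $\mathcal U_1(t_0)v = \mathcal U_2(t_0)v =: w$. The direction of rotation of the eigenvalue of $\tilde W$ through $-1$ in the direction $v$ is governed by the sign of $\tfrac{1}{i}\,\langle \tilde W(t_0)^{-1}\tilde W'(t_0) v, v\rangle$ (the standard formula for the derivative of an eigenvalue of a smooth unitary path, counterclockwise $=$ positive imaginary part of $-i\lambda'/\lambda$). So the key computation is to differentiate $\tilde W = -\mathcal U_1\mathcal U_2^{-1}$ and express $\tfrac1i\langle \tilde W^{-1}\tilde W' v, v\rangle$ in terms of $\mathcal U_1'$ and $\mathcal U_2'$. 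Using $\tilde W^{-1}\tilde W' = -\mathcal U_2\mathcal U_1^{-1}(\mathcal U_1'\mathcal U_2^{-1} - \mathcal U_1\mathcal U_2^{-1}\mathcal U_2'\mathcal U_2^{-1}) = \mathcal U_2\mathcal U_1^{-1}\mathcal U_1'\mathcal U_2^{-1}\cdot(-1) + \mathcal U_2'\mathcal U_2^{-1}$ --- I would organize this so that, using $\mathcal U_1 v = \mathcal U_2 v = w$ at $t_0$, the quadratic form collapses to $\tfrac1i\big(\langle \mathcal U_1^{-1}\mathcal U_1' v, v\rangle - \langle \mathcal U_2^{-1}\mathcal U_2' v, v\rangle\big)$ up to a positive factor (unitarity makes the conjugating factors drop out in modulus and the inner products real). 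Then I would separately compute, for a Lagrangian path with frame $\mathbf X_j=\binom{X_j}{Y_j}$, the identity
\begin{equation*}
\tfrac{1}{i}\,\big\langle \mathcal U_j^{-1}\mathcal U_j'(t_0)\, v, v\big\rangle
= \frac{\big\langle \big(X_j^*Y_j' - Y_j^*X_j'\big) u, u\big\rangle}{\big|(X_j - iY_j) u\big|^2},
\end{equation*}
where $u := (X_j(t_0) - iY_j(t_0))^{-1} w$, obtained by differentiating $\mathcal U_j = (X_j+iY_j)(X_j-iY_j)^{-1}$ and using that $X_j^*Y_j - Y_j^*X_j \equiv 0$ (so its derivative vanishes, killing cross terms). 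Here $u$ is a common nonzero vector for both $j=1,2$ precisely because $\ell_1(t_0)\cap\ell_2(t_0)\ne\{0\}$ with intersection vector $\binom{X_1 u}{Y_1 u}=\binom{X_2 u'}{Y_2 u'}$; I would check $u$ and $u'$ can be taken equal by absorbing the identification into $v$.

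\medskip

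With these two identities in hand, the eigenvalue direction through $-1$ along $v$ is the sign of
\begin{equation*}
\frac{\big\langle \big(X_1^*Y_1' - Y_1^*X_1'\big)u, u\big\rangle}{|(X_1-iY_1)u|^2}
\;-\;\frac{\big\langle \big(X_2^*Y_2' - Y_2^*X_2'\big)u, u\big\rangle}{|(X_2-iY_2)u|^2}
\;=\; \frac{\langle (-\mathbf X_1^*J\mathbf X_1')u,u\rangle}{|(X_1-iY_1)u|^2} + \frac{\langle (\mathbf X_2^*J\mathbf X_2')u,u\rangle}{|(X_2-iY_2)u|^2},
\end{equation*}
using $\mathbf X^* J \mathbf X' = Y^*X' - X^*Y'$ so that $-\mathbf X_1^*J\mathbf X_1' = X_1^*Y_1' - Y_1^*X_1'$ and $\mathbf X_2^*J\mathbf X_2' = -(X_2^*Y_2' - Y_2^*X_2')$, exactly the two matrices named in the statement. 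Under the hypothesis that both are a.e.\ $\ge 0$ near $t_0$ with at least one a.e.\ positive definite, this combination is positive on every such $v$, hence all eigenvalues of $\tilde W$ that pass through $-1$ do so counterclockwise; the reversed hypothesis gives the clockwise conclusion. Finally I would note the a.e./absolute-continuity caveat: $\mathbf X_j$ is only absolutely continuous, so I would phrase the eigenvalue-perturbation step via a difference quotient and a Lebesgue-point / monotonicity argument (the quantity $k(t,\epsilon)$ of \eqref{kdefined} is monotone in $t$ under a one-sided sign condition), rather than pointwise differentiation, to legitimately conclude monotonic crossing.

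\medskip

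The main obstacle I anticipate is bookkeeping the Cayley-transform algebra so that the conjugating unitary factors genuinely cancel and the common intersection vector $u$ is correctly matched between the two subspaces --- i.e., verifying that the vector $v$ with $\tilde W v = -v$ really does decompose so that the \emph{same} $u$ appears in both quotients above. The low regularity (absolutely continuous, not $C^1$) is a secondary technical nuisance handled by replacing derivatives with a.e.-valid difference-quotient estimates and invoking the partition-independence of the Maslov index from Definition~\ref{dfnDef3.6}.
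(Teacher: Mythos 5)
The paper itself does not prove this lemma --- it is imported from \cite{HS18} --- and the crossing form $\tilde{\mathcal{Q}}$ invoked later in the Left-shelf computation is exactly the quantity your argument produces, so your Cayley-transform route is essentially the proof of the cited reference rather than a new one. Concretely, writing $\tilde W=-\mathcal{U}_1\mathcal{U}_2^{*}$ with $\mathcal{U}_j=(X_j+iY_j)(X_j-iY_j)^{-1}$, one gets $\tfrac1i\tilde W^{*}\tilde W' = \mathcal{U}_2\bigl(\tfrac1i\mathcal{U}_1^{*}\mathcal{U}_1'\bigr)\mathcal{U}_2^{*}-\tfrac1i\mathcal{U}_2'\mathcal{U}_2^{*}$ together with $\tfrac1i\mathcal{U}_j^{*}\mathcal{U}_j' = 2\bigl((X_j-iY_j)^{-1}\bigr)^{*}\bigl(X_j^{*}Y_j'-Y_j^{*}X_j'\bigr)(X_j-iY_j)^{-1}$, which is exactly the reduction you describe, and the stated sign hypotheses then force the claimed rotation direction. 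Two small corrections to your bookkeeping: the crossing condition $\tilde W(t_0)v=-v$ is equivalent to $\mathcal{U}_1^{*}v=\mathcal{U}_2^{*}v=:z$, not to $\mathcal{U}_1v=\mathcal{U}_2v$; and after the unitary factors cancel, the two quadratic forms are evaluated at the generally \emph{different} nonzero vectors $u_j=(X_j-iY_j)^{-1}z$, not at a common $u$ --- this is harmless, since your hypotheses are matrix-level semidefiniteness/definiteness conditions, so the ``same $u$'' matching you flagged as the main obstacle is not actually needed. The low-regularity point is handled as you indicate (integrated/difference-quotient form of the derivative identity), as in \cite{HS18}.
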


\section{Proof of Theorem \ref{bc2_theorem} } 

\subsection{Upper Bound on the Spectrum of \eqref{main_s}} \label{bound_section}

By Lemma \ref{essspec}, we know that  the real part of the essential spectrum of \eqref{main_s} is bounded above by $\gamma<0$. Next, we show that a set of the real eigenvalues of \eqref{main_s} is bounded above.

\begin{lemma}\label{nu}
	Assume {\bf (A1)} and {\bf (A2)}. Then there exists $\nu\in\BbbR$ such that for all real eigenvalues $\lambda$ of \eqref{main_s}
	\begin{equation}
	\lambda\le\frac{\sqrt{\|f_2\|_{L^{\infty} (\mathbb{R}_-)}|\nu|}}{\delta}.
	\end{equation}
\end{lemma}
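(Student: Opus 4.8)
The plan is to use the variational/energy structure of the problem. Suppose $\lambda \in \mathbb{R}$ is an eigenvalue of \eqref{main_s} with eigenfunction $y \in \dom(\mathcal{L}_-(\lambda))$, so that $y'' + Vy = \lambda f_1 y + \lambda^2 f_2 y$ on $\mathbb{R}_-$ and $(c+\phi(\lambda))y(0) - y'(0) = 0$. First I would pair the equation with $y$ in $(L^2(\mathbb{R}_-))^n$ and integrate by parts: this gives
\[
-\int_{\mathbb{R}_-} |y'|^2\,dx - \big(y'(0),y(0)\big) + \int_{\mathbb{R}_-} (Vy,y)\,dx = \lambda \int_{\mathbb{R}_-}(f_1 y, y)\,dx + \lambda^2 \int_{\mathbb{R}_-}(f_2 y, y)\,dx,
\]
where the boundary term at $x=0$ comes with a minus sign and the term at $-\infty$ vanishes because $y$ decays (it lies in $H^2$). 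Substituting the boundary condition $y'(0) = (c+\phi(\lambda))y(0)$ and using that $c$, $\phi(\lambda)$ (for real $\lambda$) and $V$ are Hermitian, everything in the identity is real. Rearranging,
\[
\lambda^2 \int_{\mathbb{R}_-}(f_2 y, y)\,dx + \lambda \int_{\mathbb{R}_-}(f_1 y, y)\,dx = -\int_{\mathbb{R}_-}|y'|^2\,dx + \int_{\mathbb{R}_-}(Vy,y)\,dx - \big((c+\phi(\lambda))y(0), y(0)\big).
\]

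Next I would bound the right-hand side. The key observation is that for $\lambda \ge 0$ (the regime of interest, since we only need an upper bound) assumption {\bf (A2)} gives $\phi(0) = 0$ and $\phi'(\lambda) < 0$ on $\mathbb{R}_+$, hence $\phi(\lambda) \le 0$ as a Hermitian matrix for $\lambda \ge 0$; therefore $-(\phi(\lambda)y(0),y(0)) \ge 0$ does \emph{not} help — rather one wants an upper bound, so I instead note $-((c+\phi(\lambda))y(0),y(0)) \le -(cy(0),y(0)) \le \|c\|\,|y(0)|^2$, and $|y(0)|^2$ is controlled by $\|y\|_{H^1(\mathbb{R}_-)}$ via the trace/Sobolev inequality $|y(0)|^2 \le C(\|y'\|_{L^2}^2 + \|y\|_{L^2}^2)$, or more sharply $|y(0)|^2 \le 2\|y\|_{L^2}\|y'\|_{L^2}$. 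Combined with $\int (Vy,y) \le \|V\|_{L^\infty}\|y\|_{L^2}^2$ and discarding $-\int|y'|^2 \le 0$ after absorbing the $\|y'\|_{L^2}$ piece from the trace term (using Young's inequality $2ab \le \epsilon a^2 + \epsilon^{-1}b^2$ to absorb a small multiple of $\|y'\|_{L^2}^2$ into the available $-\int|y'|^2$), the right-hand side is bounded by $C\|y\|_{L^2(\mathbb{R}_-)}^2$ for a constant $C$ depending only on $\|V\|_{L^\infty}$ and $\|c\|$. Call this constant $|\nu|$ (this is how $\nu$ enters the statement). Meanwhile, on the left-hand side, for $\lambda \ge 0$ both terms are non-negative since $f_1 > 0$ and $f_2 \ge \delta > 0$, and the second term dominates: $\lambda^2 \int (f_2 y,y) \ge \lambda^2 \delta \|y\|_{L^2}^2$. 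Dropping the non-negative $\lambda\int(f_1y,y)$ term gives $\lambda^2 \delta \|y\|_{L^2}^2 \le |\nu| \|y\|_{L^2}^2$, and since $y \not\equiv 0$ we divide to get $\lambda^2 \le |\nu|/\delta$, hence $\lambda \le \sqrt{|\nu|}/\delta$ — wait, one should be slightly careful: $\lambda^2 \le |\nu|/\delta$ gives $\lambda \le \sqrt{|\nu|/\delta}$; to match the stated bound $\sqrt{\|f_2\|_{L^\infty}|\nu|}/\delta$ I would instead not drop $\|f_2\|_{L^\infty}$ prematurely — keep $\lambda^2\delta\|y\|^2 \le |\nu|\|y\|^2$ but define $\nu$ so that the algebra yields exactly the stated form, i.e. absorb an extra factor of $\|f_2\|_{L^\infty}\delta$ into the definition of $\nu$. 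The precise bookkeeping of constants is cosmetic.

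The main obstacle I anticipate is the boundary term handling and, relatedly, making sure the trace term $|y(0)|^2$ is genuinely absorbable. Since $\phi(\lambda)$ is only known to be $\le 0$ (from {\bf (A2)}) rather than bounded, one must be sure the sign works in the direction needed: for an \emph{upper} bound on $\lambda$ we need an \emph{upper} bound on $-((c+\phi(\lambda))y(0),y(0))$, and $-\phi(\lambda) \ge 0$ would spoil this — so one must check that in fact {\bf (A2)} is used with the correct sign, or that the boundary term is instead re-expressed. The cleanest route may be to observe that the identity, after substituting the boundary condition, reads $\lambda^2\int(f_2y,y) + \lambda\int(f_1y,y) + \int|y'|^2 + ((c+\phi(\lambda))y(0),y(0)) = \int(Vy,y)$, and then the only problematic term is $(\phi(\lambda)y(0),y(0))$ which by {\bf (A2)} is $\le 0$ for $\lambda\ge 0$, so moving it to the right side gives $\lambda^2\int(f_2y,y) + \lambda\int(f_1y,y) + \int|y'|^2 + (cy(0),y(0)) \le \int(Vy,y) \le \|V\|_{L^\infty}\|y\|^2$; now $(cy(0),y(0)) \ge -\|c\|\,|y(0)|^2$ and $|y(0)|^2 \le \epsilon\|y'\|^2 + C_\epsilon\|y\|^2$ lets us absorb into $\int|y'|^2$, giving exactly $\lambda^2\delta\|y\|^2 \le (\|V\|_{L^\infty} + C)\|y\|^2 =: |\nu|\cdot(\text{const})\|y\|^2$. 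I would define $\nu$ accordingly and conclude. The remaining steps — the integration by parts, the trace inequality, Young's inequality — are all routine.
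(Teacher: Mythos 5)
Your strategy is the same as the paper's: pair the equation with $y$, integrate by parts, use {\bf (A2)} (via monotonicity of the Hermitian family $\phi$) to conclude $\phi(\lambda)\le 0$ for $\lambda\ge 0$ and discard that boundary contribution, control the remaining $c$-term by the trace inequality $|y(0)|^2\le\epsilon\|y'\|^2+\beta(\epsilon)\|y\|^2$ with $\|c\|\epsilon<1$ absorbed into $\|y'\|^2$, and finish with $f_2\ge\delta$. (The paper packages this through the quadratic formula, bounding the ``$+$'' root and noting the ``$-$'' root is nonpositive; your direct route of dropping $\lambda(f_1y,y)\ge 0$ for $\lambda\ge0$ and observing that $\lambda<0$ trivially satisfies the bound is an equivalent, slightly cleaner bookkeeping. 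Also, your worry about constants is unnecessary: since $\delta\le\|f_2\|_{L^\infty(\mathbb{R}_-)}$, your bound $\lambda\le\sqrt{|\nu|/\delta}$ already implies the stated $\sqrt{\|f_2\|_{L^\infty(\mathbb{R}_-)}|\nu|}/\delta$, so no redefinition of $\nu$ is needed.)

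There is, however, a concrete error in the write-up: the sign of the boundary term. On $\mathbb{R}_-=(-\infty,0]$ the endpoint $0$ is the \emph{upper} limit, so
\begin{equation*}
\int_{-\infty}^{0}(y'',y)\,dx=(y'(0),y(0))-\|y'\|_{L^2(\mathbb{R}_-)}^2,
\end{equation*}
with a plus sign, not the minus sign in your first display. The correct identity is therefore
\begin{equation*}
\lambda^2(f_2y,y)+\lambda(f_1y,y)+\|y'\|^2-\big((c+\phi(\lambda))y(0),y(0)\big)=(Vy,y),
\end{equation*}
whereas your ``cleanest route'' carries $+\big((c+\phi(\lambda))y(0),y(0)\big)$ on the left. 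With your sign, the step ``$(\phi(\lambda)y(0),y(0))\le0$, so moving it to the right gives $\ldots\le(Vy,y)$'' is backwards: from $A+B=C$ with $B\le0$ one gets $A\ge C$, which yields no upper bound on $\lambda$ at all. With the corrected sign the discarded term is $-(\phi(\lambda)y(0),y(0))\ge0$, the inequality goes the right way, and then $-(cy(0),y(0))\ge-\|c\|\,|y(0)|^2$ together with the trace inequality gives exactly the paper's lower bound $\mathcal{M}[y]\ge\nu\|y\|^2$ and hence the claimed estimate. So the idea is right and matches the paper; fix the integration-by-parts sign and the argument is complete.
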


\begin{proof}
	Let $\lambda$ be a real eigenvalue of \eqref{main_s} with the corresponding eigenvector $y$. Then 
	\begin{align} 
	\begin{split}
	&y'' + V(x) y = \lambda f_1(x) y+\lambda^2 f_2(x) y; 
	\quad x\in\BbbR_-,\\
	&(c+\phi(\lambda))y(0)- y'(0)=0.
	\end{split}
	\end{align}
	Thus, after multiplying by $y$ and integration by parts, we arrive at
	\begin{align} 
	\begin{split}
	&\big((c+\phi(\lambda))y(0),y(0)\big)  -(y',y')+ (Vy,y) = \lambda (f_1y,y)+\lambda^2 (f_2y,y),
	\end{split}
	\end{align}
	or, rearranging,
	\begin{align} 
	\begin{split}
	(f_2y,y) \lambda^2 +(f_1y,y) \lambda+(y',y')-(Vy,y)-\big((c+\phi(\lambda))y(0),y(0)\big)=0.
	\end{split}
	\end{align}
	Therefore, $\lambda\in\BbbR$ satisfies one of the following equalities
	\begin{align}\label{disc}
	\begin{split}
	\lambda=\frac{-(f_1y,y)\pm\sqrt{(f_1y,y)^2-4(f_2y,y)[(y',y')-(Vy,y)-\big((c+\phi(\lambda))y(0),y(0)\big)]}}{2(f_2y,y)}.
	\end{split}
	\end{align}
	If $\lambda$ satisfies the equality with the negative sign in front of the square root, then $\lambda$ is nonpositive.
	Thus, we may assume that $\lambda$  satisfies the equality with the positive sign in front of the square root.
	
	Now, we estimate the following quadratic form $\mathcal{M}$ with the domain $H^1(\mathbb{R}_-)$:
	\begin{equation}
	\mathcal{M}[y]=(y',y')-(Vy,y)-\big((c+\phi(\lambda))y(0),y(0)\big),\,\,\,\lambda\in\mathbb{R}.
	\end{equation}
	Since $\phi(\lambda)$ is Hermitian and $\phi'(\lambda)<0$ for $\lambda\in\mathbb{R}_+$, by \cite[Theorem 5.4.]{Kato}, we conclude that $\phi(\lambda)\leq0$ for $\lambda\in\mathbb{R}_+$. Hence,
	\begin{equation}
	\mathcal{M}[y]\geq(y',y')-(Vy,y)-\big(cy(0),y(0)\big),\,\,\,\lambda\in\mathbb{R}.
	\end{equation}
	
	Given any $\epsilon > 0$ 
	there is a corresponding $\beta (\epsilon)>0$ so that 
	\begin{equation*}
	|y(0)|^2 \le
	\epsilon \|y'\|_{L^2 (\mathbb{R}_-)}^2 + \beta (\epsilon) \|y\|_{L^2 (\mathbb{R}_-)}^2.
	\end{equation*}
	
	Choose $\epsilon > 0$ small enough so that $||c|| \epsilon < 1$. Then (see \cite{HLS})
	
	\begin{equation*}
	\begin{aligned}
	{\mathcal{M}}[y] &\ge
	\|y'\|_{L^2 (\mathbb{R}_-)}^2 - \|V\|_{L^{\infty} (\mathbb{R}_-)} \|y\|_{L^2 (\mathbb{R}_-)}^2 -||c|| \epsilon\|y'\|_{L^2 (\mathbb{R}_-)}^2 -||c||\beta (\epsilon) \|y\|_{L^2 (\mathbb{R}_-)}^2 \\
	&= ( 1-||c|| \epsilon) \|y'\|_{L^2 (\mathbb{R}_-)}^2 
	+ \Big(- \|V\|_{L^{\infty} (\mathbb{R}_-)}-||c||\beta (\epsilon)\Big) \|y\|_{L^2 (\mathbb{R}_-)}^2\\
	&\ge\Big(- \|V\|_{L^{\infty} (\mathbb{R}_-)}-||c||\beta (\epsilon)\Big) \|y\|_{L^2 (\mathbb{R}_-)}^2. 
	\end{aligned}
	\end{equation*}
	Therefore, if $\nu=(- \|V\|_{L^{\infty} (\mathbb{R}_-)}-||c||\beta (\epsilon))$ which is independent of $\lambda$ and $y\in H^1(\mathbb{R}_-)$, then  ${\mathcal{M}}\ge\nu$. Thus, we have
	\begin{align}
	\begin{split}
	&-(f_1y,y)+\sqrt{(f_1y,y)^2-4(f_2y,y)[(y',y')-(Vy,y)-\big((c+\phi(\lambda))y(0),y(0)\big)]}\\
	&\le-(f_1y,y)+\sqrt{(f_1y,y)^2-4(f_2y,y)\nu\|y\|_{L^2 (\mathbb{R}_-)}^2]}\le2\sqrt{\|f_2\|_{L^{\infty} (\mathbb{R}_-)}|\nu|}\|y\|_{L^2 (\mathbb{R}_-)},
	\end{split}
	\end{align}
	and therefore, $\lambda\le\frac{\sqrt{\|f_2\|_{L^{\infty} (\mathbb{R}_-)}|\nu|}}{\delta}$.
\end{proof}

\begin{remark}\label{trunc}
	We introduce the truncated eigenvalue problem
	\begin{align}\label{x0}
	\begin{split}
	&y'' + V(x) y = \lambda f_1(x) y+\lambda^2 f_2(x) y; 
	\quad x\in\BbbR_{x_0}:=(-\infty,x_0],\\
	&(c+\phi(\lambda))y(x_0)- y'(x_0)=0.
	\end{split}
	\end{align}
	
	According to the proof of Lemma \ref{nu}, we have the uniform upper bound  estimate (independent of $x_0$) for any real eigenvalue of \eqref{x0}, that is,
	\begin{equation*}
	\lambda\le\frac{\sqrt{\|f_2\|_{L^{\infty} (\mathbb{R}_-)}|\nu|}}{\delta}.
	\end{equation*}
	We also have the upper bound $\frac{\sqrt{\|f_{2-}\|_{L^{\infty} (\mathbb{R}_-)}|\nu_-|}}{\delta}$ ($\nu_-:=- \|V_-\|_{L^{\infty} (\mathbb{R}_-)}-||c||\beta (\epsilon)$) for any real eigenvalue of the constant-coefficient problem
	\begin{align}\label{concoeff}
	\begin{split}
	&y'' + V_- y = \lambda f_{1-} y+\lambda^2 f_{2-} y; 
	\quad x\in\BbbR_{-},\\
	&(c+\phi(\lambda))y(0)- y'(0)=0.
	\end{split}
	\end{align}
\end{remark}

\subsection{Positivity of the derivative of the matrix square root}

\begin{lemma}\label{positivity}
	Let $M\in C^1(\mathbb{R}, \mathbb{C}^{n\times n})$, and assume that $M(\lambda)$ and $M'(\lambda)$ are Hermitian and positive definite for $\lambda\in\mathbb{R}$. Then $(\sqrt{M(\lambda)})'>0$ for $\lambda\in\mathbb{R}$.
\end{lemma}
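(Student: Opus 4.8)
The plan is to reduce the positivity of $(\sqrt{M(\lambda)})'$ to a standard fact about the Sylvester (Lyapunov) equation. Write $S(\lambda) := \sqrt{M(\lambda)}$, so that $S$ is the unique Hermitian positive definite square root of $M$, and it depends $C^1$-smoothly on $\lambda$ because $M$ does and $0$ is not in the spectrum of $M(\lambda)$ (one can see smoothness either via the holomorphic functional calculus, $S(\lambda) = \frac{1}{2\pi i}\oint_\Gamma \sqrt{z}\,(z-M(\lambda))^{-1}\,dz$ for a contour $\Gamma$ encircling $\sigma(M(\lambda))\subset(0,\infty)$, or via the implicit function theorem applied to the map $(S,\lambda)\mapsto S^2-M(\lambda)$, whose differential in $S$ at a positive definite $S$ is the invertible map $H\mapsto SH+HS$). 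Differentiating the identity $S(\lambda)^2 = M(\lambda)$ in $\lambda$ gives
\begin{equation}\label{eq:syl}
S(\lambda)\,S'(\lambda) + S'(\lambda)\,S(\lambda) = M'(\lambda).
\end{equation}

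Next I would invoke the classical result that if $A$ is Hermitian positive definite and $B$ is Hermitian, then the unique solution $H$ of the Sylvester equation $AH+HA=B$ is given by $H=\int_0^\infty e^{-tA} B\, e^{-tA}\,dt$, and in particular $H$ is Hermitian, and $H$ is positive definite whenever $B$ is positive definite (since $e^{-tA}Be^{-tA}$ is positive definite for each $t$, and the integral converges because $\|e^{-tA}\|\le e^{-t\alpha}$ with $\alpha>0$ the smallest eigenvalue of $A$). Applying this with $A=S(\lambda)$ (positive definite, being the square root of the positive definite $M(\lambda)$) and $B=M'(\lambda)$ (positive definite by hypothesis), equation \eqref{eq:syl} forces $S'(\lambda)>0$, which is exactly the claim.

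Concretely, the steps in order are: (1) establish that $S(\lambda)=\sqrt{M(\lambda)}$ is well-defined, Hermitian positive definite, and $C^1$ in $\lambda$; (2) differentiate $S^2=M$ to obtain the Sylvester equation \eqref{eq:syl}; (3) solve it via the integral formula $S'(\lambda)=\int_0^\infty e^{-tS(\lambda)}\,M'(\lambda)\,e^{-tS(\lambda)}\,dt$; (4) read off positive definiteness of $S'(\lambda)$ from positive definiteness of $M'(\lambda)$ and convergence of the integral. One could alternatively avoid the integral formula entirely by a spectral/diagonalization argument: in an orthonormal eigenbasis of $S(\lambda)$ with eigenvalues $s_i>0$, \eqref{eq:syl} reads $(s_i+s_j)\,(S')_{ij} = (M')_{ij}$, so for any $v\neq 0$, setting $w_{ij}=(S')_{ij}$ one gets $\langle S'v,v\rangle = \sum_{ij}\frac{(M')_{ij}}{s_i+s_j}\,\bar v_i v_j$, and since $\frac{1}{s_i+s_j}=\int_0^\infty e^{-t(s_i+s_j)}dt$ this is again $\int_0^\infty \langle M'(e^{-tS}v),(e^{-tS}v)\rangle\,dt>0$.

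The only real obstacle is the smoothness/well-definedness claim in step (1) — everything after that is a short computation — and even that is routine given that $\sigma(M(\lambda))$ stays in the open right half-axis, so the principal branch of $\sqrt{\cdot}$ is holomorphic on a neighborhood of the spectrum and the functional calculus delivers $C^1$ (indeed $C^\infty$, even real-analytic) dependence. I would state the Sylvester integral formula as a one-line lemma or cite a standard reference (e.g. Bhatia, \emph{Matrix Analysis}) rather than prove it, and keep the write-up to roughly a paragraph.
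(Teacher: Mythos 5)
Your proof is correct, and it takes a genuinely different route from the paper's. Both arguments begin with the same differentiated identity $S S' + S' S = M'$ where $S=\sqrt{M}$, but they diverge at the key step. The paper conjugates this Sylvester relation by $M^{-1/4}$, sets $C:=M^{-1/4}(M^{1/2})'M^{1/4}$, observes $C+C^*=M^{-1/4}M'M^{-1/4}>0$ so that $\Re\sigma(C)>0$, and then transfers this to $\sigma\big((M^{1/2})'\big)$ by similarity, using that $(M^{1/2})'$ is Hermitian. You instead solve the Lyapunov equation outright via the integral representation $S'=\int_0^\infty e^{-tS}M'e^{-tS}\,dt$ (or, equivalently, your eigenbasis computation with $\tfrac{1}{s_i+s_j}=\int_0^\infty e^{-t(s_i+s_j)}dt$), and read off strict positivity directly. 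Your route buys a little more: it gives an explicit formula for $(\sqrt{M})'$, it handles the $C^1$-dependence of $\sqrt{M(\lambda)}$ explicitly (the paper differentiates $M^{1/2}$ without comment), and in its spectral form it amounts to a direct proof that the L\"owner matrix $\big(\tfrac{1}{\sqrt{d_j}+\sqrt{d_k}}\big)$ is positive definite --- a point the paper's Appendix \ref{s:loewner} reaches only through the Frobenius--Perron argument of Proposition \ref{newprop} and a conjectured determinant identity in Section \ref{s:imp}. The paper's argument, on the other hand, is self-contained at the level of elementary spectral estimates (numerical range controls the real part of the spectrum) and avoids quoting or proving the Lyapunov integral formula. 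Either way the only point needing care is the differentiability of $\lambda\mapsto\sqrt{M(\lambda)}$, which you correctly dispose of via the functional calculus or the implicit function theorem applied to $H\mapsto SH+HS$.
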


\begin{proof}
	We have
	\begin{equation}
	(M^{1/2})'M^{1/2}+M^{1/2}(M^{1/2})'=M'
	\end{equation}
	Multiply both sides by $M^{-1/4}$ from the right and the left
	\begin{equation}
	M^{-1/4}(M^{1/2})'M^{1/4}+M^{1/4}(M^{1/2})'M^{-1/4}=M^{-1/4}M'M^{-1/4}
	\end{equation}
	Let $C:=M^{-1/4}(M^{1/2})'M^{1/4}$. Since $M^{-1/4}M'M^{-1/4}>0$, we have $C+C^*>0$. Therefore, we have the estimate on the real part of the spectrum of $C$, that is, $\Re(\sigma(C))>0$. We also know that $C$ is similar to $(M^{1/2})'$. Hence, $\Re(\sigma((M^{1/2})'))>0$, or $\sigma((M^{1/2})')>0$ ($(M^{1/2})'$ is Hermitian). Hence, $(\sqrt{M(\lambda)})'>0$.
\end{proof}

\subsection{Proof of Theorem \ref{bc2_theorem}}

We define a new vector $y(x) \in \mathbb{C}^{2n}$
so that $y (x) = (y_1 (x) \, \, y_2 (x))^t$, with $y_1 (x) = y (x)$
and $y_2 (x) = y' (x)$. In this way, we rewrite the equation in \ref{main_s}
in the form 
\begin{equation*} \label{sturm-liouville2}
\begin{aligned}
&Jy' = \mathbb{B} (x; \lambda) y; \quad
\mathbb{B} (x; \lambda) = 
\begin{pmatrix}
V(x)-\lambda f_1(x)-\lambda^2 f_2(x) & 0 \\
0 & I
\end{pmatrix}.
\end{aligned}
\end{equation*}
Let $\lambda_\infty>\max\{\frac{\sqrt{\|f_2\|_{L^{\infty} (\mathbb{R}_-)}|\nu|}}{\delta},\frac{\sqrt{\|f_{2-}\|_{L^{\infty} (\mathbb{R}_-)}|\nu_-|}}{\delta}\}$ (cf. Lemma \ref{nu}, Remark \ref{trunc}).
By Maslov Box, we mean the following sequence of contours:
(1) fix $x = 0$ and let $\lambda$
run from $0$ to $\lambda_\infty$ (the {\it top shelf});
(2) fix $\lambda = \lambda_\infty$ and let $x$ run from $0$ to $-\infty$  
(the {\it right shelf}); (3) fix $x = -\infty$ and let $\lambda$ run from $\lambda_\infty$ to $0$
(the {\it bottom shelf}); and (4) fix
$\lambda = 0$ and let $x$ run from $-\infty$ to $0$ (the {\it left shelf}). 
We denote by $\Gamma$ the simple closed
curve obtained by following each of these paths precisely once.

{\it Top shelf}. For the top shelf, we know from Lemma \ref{monotonicity1} that monotonicity
in $\lambda$ can be determined by 
$- \mathbf{X} (0; \lambda)^* J \partial_{\lambda} \mathbf{X} (0; \lambda)$, where $\mathbf{X} (0; \lambda)$ is a frame corresponding to the unstable subspace $E_-^u(0,\lambda)$, and $ \mathbf{X_\phi} (\lambda)^* J \partial_{\lambda} \mathbf{X}_\phi ( \lambda)$. We readily compute 
\begin{equation*}
\begin{aligned}
\frac{\partial}{\partial x} \mathbf{X}^* (x; \lambda) J_{2n} \partial_{\lambda} \mathbf{X} (x; \lambda)
&= (\mathbf{X}^{\prime})^* J_{2n} \partial_{\lambda} \mathbf{X} 
+ \mathbf{X}^* J_{2n} \partial_{\lambda} \mathbf{X}^{\prime} \\
&= - (\mathbf{X}^{\prime})^* J_{2n}^t \partial_{\lambda} \mathbf{X} 
+ \mathbf{X}^* \partial_{\lambda} J_{2n} \mathbf{X}^{\prime} \\
&= - \mathbf{X}^* \mathbb{B} (x; \lambda) \partial_{\lambda} \mathbf{X}
+ \mathbf{X}^* \partial_{\lambda} (\mathbb{B} (x; \lambda) \mathbf{X}) 
= \mathbf{X}^* \mathbb{B}_{\lambda} \mathbf{X}.
\end{aligned}
\end{equation*}
Integrating on $(-\infty,x]$,
we see that   
\begin{equation*}
\begin{split}
\mathbf{X} (x; \lambda)^* J_{2n} \partial_{\lambda} \mathbf{X} (x; \lambda)
&= \int_{-\infty}^x \mathbf{X} (y;\lambda)^* \mathbb{B}_{\lambda} (y; \lambda) \mathbf{X} (y; \lambda) dy\\
&=-\int_{-\infty}^x {X} (y;\lambda)^* [f_1(x)+2\lambda f_2(x)] {X} (y; \lambda) dy. 
\end{split}
\end{equation*}

Also,
\begin{equation*}
\mathbf{X_\phi} (\lambda)^* J \partial_{\lambda} \mathbf{X}_\phi ( \lambda)
= -\phi'(\lambda)>0. 
\end{equation*}
Monotonicity along the top shelf follows by setting $x = 0$ and appealing 
to condition $f_{j}>0$. 
In this way, we see that conditions $f_{j}>0$ and $\phi'(\lambda)<0$ for $\lambda\in\mathbb{R}$ ensure that 
as $\lambda$ increases the eigenvalues of $\tilde{W}_\phi (0; \lambda)$ will 
rotate in the counterclockwise direction. Therefore, $\mas (\ell (0; \cdot), \Phi(\cdot); [0,\lambda_\infty])$ is equal to the total number of intersection of the unstable subspace $E^u_-(0,\lambda)$ and the boundary subspace $\Phi(\lambda)$ for all $\lambda\geq0$, which in turn is the total geometric multiplicity of the operator pencil $\mathcal{L}_-$ (cf. \eqref{L-}) for all $\lambda\geq0$. Next, we show that all nonnegative eigenvalues of the operator pencil $\mathcal{L}_-$ are semisimple. Let a nonnegative $\lambda_0$ be an eigenvalue of $\mathcal{L}_-$ with the corresponding eigenvector $y_0\in\dom(\mathcal{L}_-(\lambda_0))$, and assume there exist a nonzero $y_1\in\dom(\mathcal{L}_-(\lambda_0))$ such that $\mathcal{L}_-(\lambda_0)y_1=-\mathcal{L}'_-(\lambda_0)y_0$. We have
\begin{equation}\label{gev}
(\mathcal{L}_-(\lambda_0)y_1,y_0)=((f_1+2\lambda_0f_2)y_0,y_0).
\end{equation}
Moreover,
\begin{equation}
(\mathcal{L}_-(\lambda_0)y_1,y_0)=y^*_0(0)y'_1(0)-(y'_0(0))^*y_1(0)+(y_1,\mathcal{L}_-(\lambda_0)y_0).
\end{equation}
Since $y_0$ and $y_1$ satisfy the boundary condition from \eqref{main_s}, and $c+\phi(\lambda_0)$ is self-adjoint, we have
\begin{equation}\label{ibp}
(\mathcal{L}_-(\lambda_0)y_1,y_0)=(y_1,\mathcal{L}_-(\lambda_0)y_0).
\end{equation}
Using \eqref{ibp} in \eqref{gev}, we arrive at
\begin{equation}\label{zero}
(y_1,\mathcal{L}_-(\lambda_0)y_0)=((f_1+2\lambda_0f_2)y_0,y_0).
\end{equation}
Since $y_0$ is the eigenvector of $\mathcal{L}_-(\lambda_0)$ corresponding to $\lambda_0$, left-hand side of \eqref{zero} is zero, but under Assumption {\bf (A1)} the right-hand side of \eqref{zero} is strictly positive, a contradiction.
Hence,
\begin{equation} \label{top_shelf}
\mas (\ell (0; \cdot), \Phi(\cdot); [0,\lambda_\infty]) = \mathcal{N} (0)+\dim(\ker(\mathcal{L}_-(0))),
\end{equation} 
where $\mathcal{N} (\lambda)$ denotes the spectral
count for (\ref{main_s}) (the number of real eigenvalues (including algebraic multiplicities) that are greater than $\lambda$).

{\it Right shelf.} Intersections between $\ell (x; \lambda)$ and $\Phi(\lambda)$ at some nonpositive value
$x=x_0$ will correspond with one or more non-trivial solutions to one of the truncated eigenvalue problems \eqref{x0} or \eqref{concoeff}. Then, according to Lemma \ref{nu} and Remark \ref{trunc}, we have

\begin{equation} \label{right_shelf}
\mas (\ell (\cdot; \lambda_\infty), \Phi(\lambda_\infty); [0,-\infty]) = 0.
\end{equation}

{\it Bottom shelf}. 
We observe that the monotonicity that we found along horizontal shelves does not immediately carry over to the bottom shelf (since that calculation is only valid for $x\in(-\infty,0]$).
We can still conclude monotonicity along the bottom shelf in the following way: by
continuity of our frames, we know that as $\lambda$ increases along the bottom shelf the eigenvalues
of $\tilde W_\phi(-\infty,\lambda)$ cannot rotate in the clockwise direction.  Moreover, eigenvalues of $\tilde W_\phi(-\infty,\lambda)$ 
cannot remain at $-1$ for any interval of $\lambda$ values (otherwise, there would exit an interval of $\lambda$ values consisting of the eigenvalues of the constant-coefficient operator pencil  $\mathcal{L}_-(\cdot)$). Therefore, 
\begin{equation*}
\mas (\ell (-\infty; \cdot), \Phi(\cdot); [\lambda_\infty,0])=-\sum_{0\leq\lambda<\lambda_\infty} \dim (E_-^u (-\infty; \lambda) \cap \Phi(\lambda)).
\end{equation*}

Next, our goal is to find all the intersections of two Lagrangian subspaces $E_-^u(-\infty,\lambda)$ and $\Phi(\lambda)$, where $E_-^u(-\infty,\lambda)$ is the unstable eigenspace of the asymptotic matrix $A_-(\lambda)$
\begin{equation*}
A_-(\lambda)=\begin{pmatrix}
0 & I \\
\lambda f_{1-}+\lambda^2 f_{2-}-V_- & 0
\end{pmatrix}.
\end{equation*}
Note that $\lambda f_{1-}+\lambda^2 f_{2-}-V_-$ is a self-adjoint holomorphic pencil, therefore, the corresponding eigenvalues denoted by $\{\nu_j(\lambda)\}_{j=1}^n$  are real for real values of $\lambda$.  We denote the corresponding eigenvectors by $\{r_j(\lambda)\}_{j=1}^n$
so that 
$(\lambda f_{1-}+\lambda^2 f_{2-}-V_- )r_j(\lambda) = \nu_j(\lambda) r_j(\lambda)$ for all 
$j \in \{1, 2, \dots, n\}$. Moreover, since $\lambda f_{1-}+\lambda^2 f_{2-}-V_-$ is a self-adjoint holomorphic pencil, the eigenvalue funcions $\{\nu_j(\lambda)\}_{j=1}^n$ can be chosen to be holomorphic for $\lambda\in\mathbb{R}$ and the corresponding eigenvectors $\{r_j\}_{j=1}^n$ can be chosen to be
orthonormal and holomorphic for $\lambda\in\mathbb{R}$ (cf. \cite[VII.2.1, p. 375]{Kato}). Also notice that  $\{\nu_j(\lambda)\}_{j=1}^n$ are positive curves for $\lambda\in\mathbb{R}_+=[0,\infty)$ (otherwise, there would exist $\mu\in\mathbb{R}$ such that $\det(-\mu^2 + V_- -\lambda f_{1-} -\lambda^2 f_{2-})=0$ which means that condition {\bf (A1)} is violated). We introduce $\mu_j^{\pm} (\lambda)$
\begin{equation*}
\begin{aligned}
\mu_j^{\pm} (\lambda) =  \mp\sqrt{\nu_j(\lambda)}
\end{aligned}
\end{equation*}
for $j = 1, 2, \dots, n$.

We note that the eigenvalues of ${A}_{-}$ are precisely the values 
$\{\mu_j^{\pm}\}_{j=1}^{n}$, and the associated eigenvectors are  
$\{{\rm r}_{\,j}^{\,\pm}\}_{j=1}^n = 
\{{r_{j} \choose {\mu_{j}^{\pm} {r_{j}}}}\}_{j=1}^n$. Therefore, two Lagrangian subspaces $E_-^u(-\infty,\lambda)$ and $\Phi(\lambda)$ intersect if and only if there exist non-zero vectors $c_1$ and $c_2$ such that $R(\lambda)c_1=c_2$ and $R(\lambda)D(\lambda)c_1=(c+\phi(\lambda))c_2$, where the columns of $R(\lambda)$ are $r_{j}(\lambda)$ and $D(\lambda)$ is diagonal with $\{\mu_j^{-}\}_{j=1}^{n}$ on the diagonal. Hence, $R(\lambda)D(\lambda)c_1=(c+\phi(\lambda))R(\lambda)c_1$. Or,
$(R(\lambda)D(\lambda)R^{-1}(\lambda)-(c+\phi(\lambda)))\tilde c_1=0$, where $\tilde c_1=R(\lambda)c_1$. Next, notice that 
\begin{equation}
(R(\lambda)D(\lambda)R^{-1}(\lambda))^2=R(\lambda)D^2(\lambda)R^{-1}(\lambda)=\lambda f_{1-}+\lambda^2 f_{2-}-V_-.
\end{equation}
Hence,
\begin{equation}
\sqrt{\lambda f_{1-}+\lambda^2 f_{2-}-V_-}=R(\lambda)D(\lambda)R^{-1}(\lambda).
\end{equation}
Consequently,  two Lagrangian subspaces $E_-^u(-\infty,\lambda)$ and $\Phi(\lambda)$ intersect if and only if the matrix pencil $M(\lambda):=\sqrt{\lambda f_{1-}+\lambda^2 f_{2-}-V_-}-(c+\phi(\lambda))$ has a zero eigenvalue. It is clear that $M$ is a continuously differentiable pencil with respect to  nonnegative parameter $\lambda$. In particular, the eigenvalue curves of $M$ are continuously differentiable pencil with respect to  nonnegative parameter $\lambda$ and when $\lambda=0$ $M(0)$ has $\mor(\sqrt{-V_-}-c)+\dim\ker(\sqrt{-V_-}-c)$ nonpositive eigenvalues. Next, notice that $\lambda f_{1-}+\lambda^2 f_{2-}-V_-$ and its derivative $ f_{1-}+2\lambda f_{2-}$ are strictly positive for $\lambda\geq0$ which in turn implies that the derivative of $\sqrt{\lambda f_{1-}+\lambda^2 f_{2-}-V_-}$ is strictly positive for $\lambda\geq0$ (cf. Lemma \ref{positivity}). Then, by Assumption {\bf (A2)}, $M'(\lambda)>0$ for $\lambda\geq0$. Hence, the eigenvalue curves $\{m_j(\lambda)\}_{j=1}^n$ of $M(\lambda)$ are strictly increasing for $\lambda\geq0$ by \cite[Theorem 5.4., p. 111]{Kato}. Moreover, by Assumption {\bf (A2)}, $-\phi(\lambda)\geq0$, consequently, $M(\lambda)\geq\sqrt{\lambda f_{1-}+\lambda^2 f_{2-}-V_-}-c$ for $\lambda\geq0$. Now, we choose $\lambda_0>0$ such that $\sqrt{\lambda_0}>\frac{\max_{\lambda\in\sigma({c)}} \lambda}{\sqrt{\min_{\lambda\in\sigma({f_{1-})}} \lambda}}$.  By the min-max principle, we know that the eigenvalues $\{\nu_j(\lambda_0)\}_{j=1}^n$ of $\lambda_0 f_{1-}+\lambda_0^2 f_{2-}-V_-$ are greater than $\lambda_0\min_{\lambda\in\sigma({f_{1-})}} \lambda$, therefore, the eigenvalues $\{\mu_j^{-}\}_{j=1}^{n}$ of  $\sqrt{\lambda f_{1-}+\lambda^2 f_{2-}-V_-}$ are greater than $\sqrt{\lambda_0\min_{\lambda\in\sigma({f_{1-})}} \lambda}$. Hence, 

\begin{equation}
M(\lambda_0)\geq\sqrt{\lambda_0 f_{1-}+\lambda_0^2 f_{2-}-V_-}-\sqrt{\lambda_0\min_{\lambda\in\sigma({f_{1-})}} \lambda}I-(c-\sqrt{\lambda_0\min_{\lambda\in\sigma({f_{1-})}} \lambda}I)>0.
\end{equation}
Hence, the eigenvalue curves of $M(\lambda)$ whose initial values at $\lambda=0$ are nonpositive eigenvalues of $M(0)$ are strictly increasing and since there exist $\lambda_0>0$ such that $M(\lambda_0)>0$, these eigenvalue curves must intersect  the $\lambda$-axis exactly once. Therefore, the number of times $M(\lambda)$ has a zero eigenvalue is equal to $\mor(\sqrt{-V_-}-c)+\dim\ker(\sqrt{-V_-}-c)$. Therefore,
\begin{equation}\label{morseM}
\mas (\ell (-\infty; \cdot), \Phi(\cdot); [\lambda_\infty,0])=-\mor(\sqrt{-V_-}-c)-\dim\ker(\sqrt{-V_-}-c).
\end{equation}

Finally, by formulas \eqref{top_shelf},  \eqref{right_shelf}, \eqref{morseM}, and fact that the Maslov index of the Maslov box is $0$, we arrive at the formula for $\mathcal{N} (0)$
\begin{align*}
\mathcal{N} (0)
&=-\mas (E_-^u(\cdot; 0), \Phi(0); [-\infty,0])-\dim(\ker(\mathcal{L}_-(0)))\\
&+\mor(\sqrt{-V_-}-c)+\dim\ker(\sqrt{-V_-}-c).
\end{align*}
Similarly, one can easily derive a formula for $\mathcal{N} (\lambda)$ for $\lambda\geq0$.
\subsection{Reality of eigenvalues}

\begin{lemma}\label{nocomplex}
	Let Assumptions {\bf (A1)} and {\bf (A3)} hold, and $\lambda=a+ib$ with $a\geq0$ be an eigenvalue of the operator pencil \eqref{L-}. Then $b=0$, that is, $\lambda\in\mathbb{R}$.
\end{lemma}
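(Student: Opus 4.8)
The plan is to adapt the standard energy/quadratic-form argument (as in \cite[Lemma 4.1]{SYZ18}) to the quadratic pencil with $\lambda$-dependent boundary condition. First I would take an eigenvalue $\lambda = a+ib$ with $a \geq 0$ and a corresponding eigenfunction $y \in \dom(\mathcal{L}_-(\lambda))$; note $y \in H^2(\BbbR_-)$ so boundary terms at $-\infty$ vanish. Multiplying $y'' + Vy = \lambda f_1 y + \lambda^2 f_2 y$ by $\bar y$ and integrating by parts over $\BbbR_-$, using the Robin condition $(c+\phi(\lambda))y(0) = y'(0)$, yields the scalar identity
\begin{equation*}
\lambda^2 (f_2 y, y) + \lambda (f_1 y, y) + (y',y') - (Vy,y) - \big((c+\phi(\lambda))y(0),y(0)\big) = 0 .
\end{equation*}
Here $(f_2 y,y) \geq \delta \|y\|^2 > 0$ and $(f_1 y, y) > 0$ are real and positive, and $(y',y') - (Vy,y)$ is real; the only term that can fail to be real is the boundary term, since $\phi(\lambda)$ need not be Hermitian when $\Im\lambda \neq 0$.

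Next I would split the identity into real and imaginary parts, writing $A := (f_2 y,y) > 0$, $B := (f_1 y, y) > 0$, $C := (y',y') - (Vy,y) - \big(\,(c + \Re\phi(\lambda))\,y(0),y(0)\big) \in \BbbR$, and $E := \big((\Im\phi(\lambda))\,y(0), y(0)\big)$, where $\Im\phi(\lambda) = \tfrac12(\phi(\lambda)^* - \phi(\lambda))$ (so $\big((\Im\phi(\lambda))y(0),y(0)\big)$ is the imaginary part of $\big(\phi(\lambda)y(0),y(0)\big)$, up to the sign convention of the footnote). Separating real and imaginary parts of $\lambda^2 A + \lambda B + C - iE\cdot(\text{sign factor}) = 0$ with $\lambda = a+ib$ gives two real equations; the imaginary part reads, schematically,
\begin{equation*}
b\big(2aA + B\big) = \big(\text{contribution from } \Im\phi(\lambda)\big).
\end{equation*}
Since $a \geq 0$, $A > 0$, $B > 0$, the coefficient $2aA + B$ is strictly positive, so if I can show the right-hand side has sign opposite to (or equal to zero together with) $b$, I force $b = 0$. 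This is exactly where Assumption {\bf (A3)} enters: $\sign(\Im\lambda)\,\Im\phi(\lambda) \leq 0$ for $\Re\lambda \geq 0$ means $\big(\Im\phi(\lambda)y(0),y(0)\big)$ and $b = \Im\lambda$ have opposite signs (or the former vanishes), so the two sides of the imaginary-part equation have strictly opposite signs unless both vanish, giving $b(2aA+B) = 0$ and hence $b=0$.

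The main obstacle — really just a bookkeeping point to get right — is tracking the sign and the precise meaning of $\Im\phi(\lambda)$ through the integration by parts: the boundary term is $\big((c+\phi(\lambda))y(0),y(0)\big)$, whose non-Hermitian part is governed by $\phi(\lambda)^* - \phi(\lambda)$, and one must verify that the sign assigned to it by {\bf (A3)} (with the footnote's convention $\Im M = \tfrac12(M^* - M)$) is the one that makes the imaginary-part equation sign-definite rather than the "wrong" sign, which would be vacuous. A secondary subtlety is ensuring $y(0) \neq 0$ is not needed: if $y(0) = 0$ the boundary term drops entirely, the identity becomes $\lambda^2 A + \lambda B + C = 0$ with all coefficients real and $A,B>0$, and then either the roots are real or they are complex conjugates with $\Re\lambda = -B/(2A) < 0$, contradicting $a \geq 0$; so in that case too $b=0$. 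Assembling these two cases completes the proof.
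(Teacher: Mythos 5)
Your proposal is essentially the paper's own argument: multiply by the eigenfunction, integrate by parts using the Robin condition, take the imaginary part of the resulting identity, and compare the sign of $b\big((f_1y,y)+2a(f_2y,y)\big)$ with the sign of the boundary contribution from $\Im\phi(\lambda)$, which Assumption {\bf (A3)} forces to be opposite (or zero), giving $b=0$. Your separate treatment of the case $y(0)=0$ is harmless but not needed, since in that case the boundary term simply vanishes and the same sign comparison applies.
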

\begin{proof}
	After multiplying \eqref{main_s} by the corresponding eigenvector $y$ and integration by parts, we arrive at
	\begin{align}\label{real}
	\begin{split}
	&\big((c+\phi(\lambda))y(0),y(0)\big)  -(y',y')+ (Vy,y) = \lambda (f_1y,y)+\lambda^2 (f_2y,y).
	\end{split}
	\end{align}
	Next, we take the imaginary part of \eqref{real}
	\begin{align}
	\begin{split}
	&(\Im\phi(\lambda)y(0),y(0))= b (f_1y,y)+2ab (f_2y,y).
	\end{split}
	\end{align}
	It follows from Assumption {\bf (A1)} that $(f_1y,y)+2a (f_2y,y)>0$, therefore, the sign of the right hand side is $\sign b=\sign\Im\lambda$. 
	By Assumption {\bf (A3)}, the matrix $\Im\phi(\lambda)$ is semidefinite, with sign opposite to $\sign \Im \lambda= \sign b$. 
	Therefore, the sign of the left hand side is also of (indefinite) sign opposite to $\sign\Im \lambda$.
	Comparing signs of lefthand and righthand sides, we find that $b=0$.
\end{proof}

\hfill $\square$

\br\label{phirmk}
Assumption {\bf (A1)} on $\Im \phi$ is sharp in Lemma \ref{nocomplex}, as without it
one may readily construct counterexamples for operator pencils independent of $\lambda$.
For polynomial $\phi(\lambda) = \sum_{j=1}^r c_j \lambda^j$, 
{\bf (A1)} on $\Im \phi$ implies that $r=1$, or linearity, 
as may be seen by looking at the large $|\lambda|$ limit, for which the
highest term $c_r\lambda^r$ dominates $\sgn \Im \phi(\lambda)$.
\er

\section{Proof of Theorem \ref{bc3_theorem} } \label{proofs}

\subsection{Upper Bound on the Spectrum of \eqref{mainL}}

By Lemma \ref{essspecL}, we know that  the real part of the essential spectrum of \eqref{mainL} is bounded above by $\gamma<0$. Next, we show that a set of the real isolated eigenvalues of \eqref{mainL} is bounded above.

\begin{lemma}\label{Vinf}
	Assume {\bf (A4)} . Then there exists $\nu\in\BbbR$ such that for all real eigenvalues $\lambda$ of \eqref{mainL}
	\begin{equation}
	\lambda\le\frac{\sqrt{\|f_2\|_{L^{\infty} (\mathbb{R})}\|V\|_{L^{\infty} (\mathbb{R})}}}{\delta}.
	\end{equation}
\end{lemma}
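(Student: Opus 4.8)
The plan is to follow the proof of Lemma \ref{nu}, which is actually simpler here because the Dirichlet condition $y(L)=0$ eliminates the boundary term. Let $\lambda\in\BbbR$ be an eigenvalue of \eqref{mainL} with eigenfunction $y\in H^2(\BbbR_L)$, $y(L)=0$. Since $H^2(\BbbR_L)\hookrightarrow C^1$ with $y,y'\to 0$ as $x\to-\infty$ and $y(L)=0$, multiplying the equation by $\bar y$ and integrating by parts over $(-\infty,L]$ produces no boundary contributions and yields the scalar identity
\[
(f_2y,y)\lambda^2+(f_1y,y)\lambda+(y',y')-(Vy,y)=0 .
\]
Thus $\lambda$ equals one of the two roots of this quadratic. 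The root carrying the minus sign in front of the square root is $\le 0$, since $(f_1y,y)>0$ by $f_1>0$ and the radical is nonnegative; so it lies below the asserted bound, and it remains only to estimate the root with the plus sign.

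For that root I would bound the quadratic form $(y',y')-(Vy,y)\ge -(Vy,y)\ge -\|V\|_{L^\infty(\BbbR)}\|y\|_{L^2(\BbbR_L)}^2=:\nu\|y\|_{L^2(\BbbR_L)}^2$, where $\nu:=-\|V\|_{L^\infty(\BbbR)}$ is finite because $V$ is continuous with limits at $\pm\infty$ under {\bf (A4)}; note that, in contrast to Lemma \ref{nu}, no trace inequality is needed. Inserting this into the quadratic formula, bounding $(f_2y,y)\le\|f_2\|_{L^\infty(\BbbR)}\|y\|^2$ inside the discriminant, using $\sqrt{a^2+b}\le a+\sqrt b$ for $a,b\ge0$, and finally $(f_2y,y)\ge\delta\|y\|^2$ in the denominator, gives
\[
\lambda\le\frac{-(f_1y,y)+\sqrt{(f_1y,y)^2+4\|f_2\|_{L^\infty(\BbbR)}\|V\|_{L^\infty(\BbbR)}\|y\|^4}}{2\delta\|y\|^2}
\le\frac{\sqrt{\|f_2\|_{L^\infty(\BbbR)}\|V\|_{L^\infty(\BbbR)}}}{\delta},
\]
which is the claimed estimate.

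I do not expect a genuine obstacle: this is a routine specialization of Lemma \ref{nu}. The only points deserving care are checking that the integration-by-parts boundary terms truly vanish (via the Sobolev embedding, decay at $-\infty$, and $y(L)=0$) and keeping track of which branch of the square root can produce $\lambda>0$; the quantity $\nu$ appearing in the statement is simply $-\|V\|_{L^\infty(\BbbR)}$, serving as the lower bound $(y',y')-(Vy,y)\ge\nu\|y\|^2$ exactly as in Lemma \ref{nu}.
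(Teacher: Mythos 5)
Your proposal is correct and follows essentially the same route as the paper: multiply by the eigenfunction, integrate by parts (no boundary terms because of the Dirichlet condition and decay at $-\infty$), discard the minus-sign root of the resulting quadratic, bound $(y',y')-(Vy,y)\ge -\|V\|_{L^\infty(\mathbb{R})}\|y\|^2$, and conclude via the quadratic formula with $(f_2y,y)\ge\delta\|y\|^2$. Your identification of $\nu=-\|V\|_{L^\infty(\mathbb{R})}$ and the observation that no trace inequality is needed match the paper exactly.
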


\begin{proof}
	Let $\lambda$ be a real eigenvalue of \eqref{mainL} with the corresponding eigenvector $y$. Then 
	\begin{align} 
	\begin{split}
	&y'' + V(x) y = \lambda f_1(x) y+\lambda^2 f_2(x) y; 
	\quad x\in\BbbR_L,\\
	&y(L)=0.
	\end{split}
	\end{align}
	Or, after multiplying by $y$ and integration by parts, we arrive at
	\begin{align} 
	\begin{split}
	&-(y',y')+ (Vy,y) = \lambda (f_1y,y)+\lambda^2 (f_2y,y).
	\end{split}
	\end{align}
	Or,
	\begin{align} 
	\begin{split}
	(f_2y,y) \lambda^2 +(f_1y,y) \lambda+(y',y')-(Vy,y)=0.
	\end{split}
	\end{align}
	Therefore, $\lambda$ satisfies one of the following equalities
	\begin{align}\label{discL}
	\begin{split}
	\lambda=\frac{-(f_1y,y)\pm\sqrt{(f_1y,y)^2-4(f_2y,y)[(y',y')-(Vy,y)]}}{2(f_2y,y)}
	\end{split}
	\end{align}
	
	If $\lambda$ satisfies the equality with the negative sign in front of the square root, then $\lambda$ is nonpositive.
	Next, we assume that $\lambda$  satisfies the equality with the positive sign in front of the square root.
	Next, we estimate the following quadratic form $\mathcal{M}$ with the domain $H^1(\mathbb{R}_L)$:
	\begin{equation}
	\mathcal{M}[y]=(y',y')-(Vy,y)\ge- \|V\|_{L^{\infty} (\mathbb{R})} \|y\|_{L^2 (\mathbb{R}_L)}^2.
	\end{equation}

	Therefore,
	\begin{align*}
	\begin{split}
	&-(f_1y,y)+\sqrt{(f_1y,y)^2-4(f_2y,y)[(y',y')-(Vy,y)]}\\
	&\le-(f_1y,y)+\sqrt{(f_1y,y)^2+4(f_2y,y)\|V\|_{L^{\infty} (\mathbb{R})}\|y\|_{L^2 (\mathbb{R}_L)}^2]}\le2\sqrt{\|f_2\|_{L^{\infty} (\mathbb{R})}|\|V\|_{L^{\infty} (\mathbb{R})}|}\|y\|_{L^2 (\mathbb{R}_L)}.
	\end{split}
	\end{align*}
	Therefore, $\lambda\le\frac{\sqrt{\|f_2\|_{L^{\infty} (\mathbb{R})}\|V\|_{L^{\infty} (\mathbb{R})}}}{\delta}$.
\end{proof}

\begin{remark}\label{unifL}
	Note that the upper bound from Lemma \ref{Vinf} is independent of $L$.
\end{remark}

In this section, we use our Maslov index framework to prove our
main theorems.

We define a new vector $y(x) \in \mathbb{C}^{2n}$
so that $y (x) = (y_1 (x) \, \, y_2 (x))^t$, with $y_1 (x) = y (x)$
and $y_2 (x) = y' (x)$. In this way, we rewrite \ref{mainL}
in the form 
\begin{equation*} \label{sturm-liouville2}
\begin{aligned}
&Jy' = \mathbb{B} (x; \lambda) y; \quad
\mathbb{B} (x; \lambda) = 
\begin{pmatrix}
V(x)-\lambda f_1(x)-\lambda^2 f_2(x) & 0 \\
0 & I
\end{pmatrix}.
\end{aligned}
\end{equation*}

\subsection{Proof of Theorem \ref{bc3_theorem}}
Let $\lambda_\infty>\frac{\sqrt{\|f_2\|_{L^{\infty} (\mathbb{R})}\|V\|_{L^{\infty} (\mathbb{R})}}}{\delta}$ (cf. Lemma \ref{Vinf}). By Maslov Box, we mean the following sequence of contours:
(1) fix $x = -\infty$ and let $\lambda$ run from $\lambda_\infty$ to $0$
(the {\it bottom shelf}); 
(2) fix
$\lambda = 0$ and let $x$ run from $-\infty$ to $L$ (the {\it left shelf});  (3) fix $x = L$ and let $\lambda$
run from $0$ to $\lambda_\infty$ (the {\it top shelf});  and (4) fix $\lambda = \lambda_\infty$ and let $x$ run from $L$ to $-\infty$  
(the {\it right shelf}). 
We denote by $\Gamma$ the simple closed
curve obtained by following each of these paths precisely once.


{\it Bottom shelf}. We begin our analysis with the bottom shelf. Since 
$E_-^u(-\infty,\lambda)$ does not intersect the Dirichlet subspace $\mathcal{D}$, we see that in fact the matrix 
$\det{(\tilde{W}_D (0; \lambda)+I)}$ does not vanish, and so 
\begin{equation} \label{bottom_shelfR}
\mas (\ell (-\infty; \cdot), \mathcal{D}; [\lambda_\infty, 0]) = 0.
\end{equation}

{\it Left shelf}. It is clear that $\mathbf{X}_D^* J \partial_{x} \mathbf{X}_D=0$, but
$- \mathbf{X} (x; 0)^* J \partial_{x} \mathbf{X} (x; 0)$ is not sign definite for values of $x$ which means that can not directly apply Lemma \ref{monotonicity1}. Instead, we can compute the spectral flow of  $\tilde{W}_D (\cdot; 0)$ through $-1$. Assume that at least one of the eigenvalues of $\tilde{W}_D (\cdot; 0)$ at $x=x_*$ is $-1$ ($E^u_-(x_*,0)$ and $\mathcal{D}$ has a non-trivial intersection). Then the spectal flow of $\tilde{W}_D (\cdot; 0)$ through $-1$ as $x$ crosses through $x_*$ is determined by signature of the following quadratic form defined on $\ker(\tilde{W}_D (x_*; 0)+I_n)$ (cf. \cite{HS18}):
\begin{align}
\begin{split}
\tilde{\mathcal{Q}}(w) &= 
- 2 \Big(\Big( (X - i Y)^{-1}  \Big)^* \mathbf{X} (x_*; 0)^* J \partial_{x} \mathbf{X} (x_*; 0) (X - i Y)^{-1}  w, 
w \Big)_{\mathbb{C}^n} \\
&=  2 \Big(\Big( (X - i Y)^{-1}  \Big)^*(X (x_*)^* Y' (x_*) - Y (x)^* X'(x_*)) (X - i Y)^{-1}  w, 
w \Big)_{\mathbb{C}^n}\\
&=2 \Big(\Big( (X - i Y)^{-1}  \Big)^*(X (x_*)^* (-V)X(x_*) - Y (x_*)^* Y(x_*)) (X - i Y)^{-1}  w, 
w \Big)_{\mathbb{C}^n}.
\end{split}
\end{align}
Since $(X - i Y)^{-1}  w\in\ker(X(x_*))$, we have the following formula for $\tilde{\mathcal{Q}}$:
\begin{align}
\begin{split}
\tilde{\mathcal{Q}}(w) &= 
- 2 \Big(\Big( (X - i Y)^{-1}  \Big)^* \mathbf{X} (x_*; 0)^* J \partial_{x} \mathbf{X} (x_*; 0) (X - i Y)^{-1}  w, 
w \Big)_{\mathbb{C}^n} \\
&=  2 \Big(\Big( (X - i Y)^{-1}  \Big)^*(X (x_*)^* Y' (x_*) - Y (x)^* X'(x_*)) (X - i Y)^{-1}  w, 
w \Big)_{\mathbb{C}^n}\\
&=-2 \Big( Y (x_*) (X - i Y)^{-1}  w, 
 Y (x_*)(X - i Y)^{-1} w \Big)_{\mathbb{C}^n}<0.
\end{split}
\end{align}

Therefore,
\begin{equation}\label{leftR}
\begin{aligned}
\mas (\ell (\cdot; 0), \mathcal{D}; [-\infty, L])
& =- \sum_{-\infty < x \le L} \dim (\ell (x; 0) \cap \mathcal{D}) \\
&=- \sum_{-\infty < x \le L} \dim \ker (\mathbf{X} (x; 0)^* J \mathbf{X}_D)\\
&=-\sum_{-\infty < x <L} \dim \ker (\mathbf{X} (x; 0)^* J \mathbf{X}_D)-\dim(\ker(\mathcal{L}_L(0))).
\end{aligned}
\end{equation}

{\it Top shelf}. Since $\mathbf{X}_D^* J \partial_{\lambda} \mathbf{X}_D=0$, by Lemma \ref{monotonicity1}, monotonicity
in $\lambda$ can be determined by 
$- \mathbf{X} (L; \lambda)^* J \partial_{\lambda} \mathbf{X} (L; \lambda)$, 
and we readily compute 
\begin{equation*}
\begin{aligned}
\frac{\partial}{\partial x} \mathbf{X}^* (x; \lambda) J_{2n} \partial_{\lambda} \mathbf{X} (x; \lambda)
&= (\mathbf{X}^{\prime})^* J_{2n} \partial_{\lambda} \mathbf{X} 
+ \mathbf{X}^* J_{2n} \partial_{\lambda} \mathbf{X}^{\prime} \\
&= - (\mathbf{X}^{\prime})^* J_{2n}^t \partial_{\lambda} \mathbf{X} 
+ \mathbf{X}^* \partial_{\lambda} J_{2n} \mathbf{X}^{\prime} \\
&= - \mathbf{X}^* \mathbb{B} (x; \lambda) \partial_{\lambda} \mathbf{X}
+ \mathbf{X}^* \partial_{\lambda} (\mathbb{B} (x; \lambda) \mathbf{X}) 
= \mathbf{X}^* \mathbb{B}_{\lambda} \mathbf{X}.
\end{aligned}
\end{equation*}
Integrating on $(-\infty,x]$,
we see that   
\begin{equation*}
\begin{split}
\mathbf{X} (x; \lambda)^* J_{2n} \partial_{\lambda} \mathbf{X} (x; \lambda)
&= \int_{-\infty}^x \mathbf{X} (y;\lambda)^* \mathbb{B}_{\lambda} (y; \lambda) \mathbf{X} (y; \lambda) dy\\
&=-\int_{-\infty}^x {X} (y;\lambda)^* [f_1(x)+2\lambda f_2(x)] {X} (y; \lambda) dy. 
\end{split}
\end{equation*}

Monotonicity along the top shelf follows by setting $x = L$ and appealing 
to condition $f_{j}>0$. 
In this way, we see that condition $f_{j}>0$ ensures that 
as $\lambda$ increases the eigenvalues of $\tilde{W}_D (L; \lambda)$ will 
rotate in the counterclockwise direction. Therefore, $\mas (\ell (L; \cdot), \mathcal{D}; [0, \lambda_\infty])$ is equal to the total number of intersection of the unstable subspace $E^u_-(L,\lambda)$ and the boundary subspace $\mathcal{D}$ for all $\lambda\geq0$, which in turn is the total geometric multiplicity of the operator pencil $\mathcal{L}_L$ (cf. \eqref{LL}) for all $\lambda\geq0$. Next, we show that all nonnegative eigenvalues of the operator pencil $\mathcal{L}_L$ are semisimple. Let a nonnegative $\lambda_0$ be an eigenvalue of $\mathcal{L}_L$ with the corresponding eigenvector $y_0\in\dom(\mathcal{L}_L(\lambda_0))$, and assume there exist a nonzero $y_1\in\dom(\mathcal{L}_L(\lambda_0))$ such that $\mathcal{L}_L(\lambda_0)y_1=-\mathcal{L}'_L(\lambda_0)y_0$. We have
\begin{equation*}
(\mathcal{L}_L(\lambda_0)y_1,y_0)=((f_1+2\lambda_0f_2)y_0,y_0).
\end{equation*}
After ingratiating by parts, we arrive at
\begin{equation}\label{zeroL}
(y_1,\mathcal{L}_L(\lambda_0)y_0)=((f_1+2\lambda_0f_2)y_0,y_0).
\end{equation}
Since $y_0$ is the eigenvector of $\mathcal{L}_L(\lambda_0)$ corresponding to $\lambda_0$, left-hand side of \eqref{zeroL} is zero, but under Assumption {\bf (A4)} the right-hand side of \eqref{zeroL} is strictly positive, a contradiction.
Hence,
\begin{equation}\label{topR}
\mas (\ell (L; \cdot), \mathcal{D}; [0, \lambda_\infty])=\mathcal{N} (0)+\dim(\ker(\mathcal{L}_L(0))),
\end{equation}
where $\mathcal{N} (0)$ denotes the spectral
count for (\ref{mainL}) (the number of real eigenvalues (including algebraic multiplicities) that are greater than $0$).

{\it Right shelf.}  Intersections between $\ell (x; \lambda)$ and $\mathcal{D}$ at some value
$x=x_0$, where $-\infty<x_0\leq L$ will correspond with one or more non-trivial solutions to the truncated eigenvalue problem:
\begin{align}\label{unL}
\begin{split}
&y'' + V(x) y = \lambda f_1(x) y+\lambda^2 f_2(x) y; 
\quad x\in\BbbR_{x_0}:=(-\infty, x_0],\,\,x_0\leq L,\\
&y(x_0)=0.
\end{split}
\end{align}
By Lemma \ref{Vinf} and Remark \ref{unifL}, we have the uniform upper bound $\frac{\sqrt{\|f_2\|_{L^{\infty} (\mathbb{R})}\|V\|_{L^{\infty} (\mathbb{R})}}}{\delta}$ for the real eigenvalues of \eqref{unL}, and $\ell (-\infty; \lambda_\infty)$ and $\mathcal{D}$ don not intersect by the bottom shelf argument. Therefore,
\begin{equation} \label{right_shelfR}
\mas (\ell (\cdot; \lambda_\infty), \mathcal{D}); [L, -\infty]) = 0.
\end{equation}


Finally, by formulas \eqref{bottom_shelfR},  \eqref{leftR}, \eqref{topR}, \eqref{right_shelfR},   and fact that the Maslov index of the Maslov box is $0$, we arrive at the formula for $\mathcal{N} (0)$
\begin{align*}
\mathcal{N} (0)
&=\sum_{-\infty < x <L} \dim (E_-^u(x,0) \cap \mathcal{D}).
\end{align*}
Similarly, one can easily derive a formula for $\mathcal{N} (\lambda)$ for $\lambda\geq0$.

\subsection{Proof of Theorem \ref{bc1_theorem}}

\begin{proof}
	
	We follow the proof of similar results from \cite{BCJLMS,SaS00}. Our goal is to compute the number of positive eigenvalues of the operator pencil $\mathcal{L}(\cdot)$, that is, the number of $\lambda\in(0,\lambda_\infty)$ such that $$\mathbb E_+^s(0,\lambda)\wedge\mathbb E_-^u(0,\lambda)=0.$$ On the other hand,  $\mathcal{N} (0)$ for the operator pencil $\mathcal{L}_L$ is equal to the number of zeros of the function
	\[
	\mathcal D\wedge \mathbb E_-^u(L,\lambda),\,\,\,\lambda\in(0,\lambda_\infty).
	\]
	We claim that $\mathbb E_+^s(0,\lambda)\wedge\mathbb E_-^u(0,\lambda)$ and $\mathcal D\wedge \mathbb E_-^u(L,\lambda)$ have the same number of zeros, counting multiplicity, for sufficiently large values of $L$.
	
	Let $\phi(x_1,x_2;\lambda)$ denote the propagator of the non-autonomous differential equation $y' = A(x,\lambda)y$. Also denote $\mathcal D_L(\lambda) = \phi(0,L;\lambda)\mathcal D\wedge\mathbb E_-^u(0,\lambda)$ and $\mathcal D_\infty(\lambda) = \mathbb E_+^s(0,\lambda)\wedge\mathbb E_-^u(0,\lambda)$, and choose an analytic basis $\{v_j^+(\lambda)\}$ of $\mathbb E_+^s(0,\lambda)$. Note that $\mathcal D\oplus\mathbb E_+^u(+\infty,\lambda) = \mathbb{C}^{2n}$ because $\mathbb E_+^u(+\infty,\lambda)\cap\mathcal D = \{0\}$. 
	
	It is known that $\mathbb E_+^{s/u}(L,\lambda)\to\mathbb E_+^{s/u}(+\infty,\lambda)$ exponentially as $L\to+\infty$; see \cite[Thm. 1]{SaS00}. Then, as in \cite[Thm. 2]{SaS00}, there exist unique vectors $w_j^+(\lambda)\in\mathbb E_+^u(L,\lambda)$ such that $\mathcal D=\text{span}\{\phi(L,0;\lambda)v_j^+(\lambda)+w_j(\lambda):j=1,\ldots,n\}$ and
	\[
	\phi(0,L;\lambda) \mathcal D = \text{span}\{v_j^+(\lambda)+\phi(0,L;\lambda)w_j^+(\lambda): j=1,\ldots,n\}.
	\]
	Thus $\phi(0,L;\lambda)\mathcal D$ and $\mathbb E_+^s(0,\lambda)$ are $\exp(-\sigma_+L)$-close, where $\sigma_+$ is the rate of exponential  decay of solutions at $+\infty$. Then $\mathcal D_L(\lambda)$ and $\mathcal D_\infty(\lambda)$ have the same multiplicities of zeros by \cite[Rmk 4.3]{SaS00}. The claim now follows from the fact that $\mathbb E_-^u(L,\lambda) = \phi(L,0;\lambda)\mathbb E_-^u(0,\lambda)$, hence
	\[
	\mathcal D_{L}= \phi(0,L;\lambda)\mathcal D\wedge\phi(0,L;\lambda)\mathbb E_-^u(L,\lambda) = [\det\phi(0,L;\lambda)]\mathcal D\wedge \mathbb E_-^u(L,\lambda).
	\]
	
	In particular, $\mathcal{N} (0)$ for the operator pencil $\mathcal{L}_L$ is independent of $L$ for $L$ large enough. Finally, applying Theorem \ref{bc3_theorem}, we infer the main assertion.

\end{proof}

\section{Application}

We study spectral stability of hydraulic shock profiles 
of the (inviscid) Saint-Venant equations for inclined shallow-water flow:
\ba \label{sv}
\d_th+\d_xq&=0,\\
\d_tq+\d_{x}\left(\frac{q^2}{h}+\frac{h^2}{2F^2}\right)&=h-\frac{|q|q}{h^2},
\ea
where $h$ denotes fluid height; $q=hu$ total flow, with $u$ fluid velocity; and
$F>0$ the {\it Froude number}, a nondimensional parameter depending on reference height/velocity and inclination.

Following \cite{YZ}, we here focus on the hydrodynamically stable case $0<F<2$, and associated {\it hydraulic shock profile} solutions 
\be\label{prof}
(h,q)(x,t)= (H,Q)(x-ct), \quad \lim_{z\to - \infty}(H,Q)(z)= (H_L,Q_L), \; \lim_{z\to - \infty}(H,Q)(z)= (H_R,Q_R).
\ee
These are piecwise smooth traveling-wave solutions satisfying the Rankine-Hugoniot jump and Lax entropy conditions at any discontinuities. Their existence theory reduces to the study of an explicitly solvable 
scalar ODE with polynomial coefficients \cite{YZ}

We now turn to the discussion of stability.
Linearizing \eqref{sv} about a smooth profile $(H,Q)$ following \cite{MZ,SYZ18}, we obtain eigenvalue equations
\be 
\label{syseval}
Av'=(E-\lambda \Id-A_x)v,
\ee
where
\ba
A&=\left[\begin{array}{cc} -c & 1\\ \frac{H}{F^2}-\frac{Q^2}{H^2} & \frac{2Q}{H}-c \end{array}\right],\quad E=\left[\begin{array}{cc} 0 & 0\\ \frac{2Q^2}{H^3}+1 & -\frac{2Q}{H^2} \end{array}\right].\\
\ea
It is shown in \cite{YZ} that essential spectrum of $\mathcal{L}:= -A\partial_x- \partial_xA +E$ is confined to
$\{\lambda:  \Re \lambda <0\}\cup \{0\}$, with an embedded eigenvalue at $\lambda=0$.
Moreover, it is shown that the embedded eigenvalue at $\lambda=0$ is of multiplicity one in a 
generalized sense defined in terms of an associated Evans function defined as in \cite{AGJ,MZ}.
It follows by the general theory of \cite{MZ2} relating generalized, or Evans-type, spectral stability to linearized and nonlinear stability, that smooth hydraulic shock profiles are nonlinearly orbitally stable so long as they are
{\it weakly spectrally stable} in the sense that there exist no decaying solutions of \eqref{syseval}
on $\{\lambda:  \Re \lambda \geq 0\}\setminus \{0\}$.

The discontinuous case is more complicated, involving a free boundary with transmission/evolution conditions given by the Rankine-Hugoniot jump conditions.
However, following the approach of Erpenbeck-Majda for the study of such problems in the context
of shocks and detonations, one may deduce a generalized eigenproblem consisting of the same ODE \eqref{syseval},
but posed on the negative half-line $x\in (-\infty,0)$ with boundary condition 
\be \label{bc}
[\lambda\overline{W}-R(\overline{W})]_\perp \cdot A(0^-)v(0^-)=0,
\ee
where $\overline{W}:=(H,Q)^T$ and $[h]:= h(0^+)-h(0^-)$ denotes jump in $h$ across $x=0$; see \cite{YZ} for further details.
Similarly as in the smooth case, it is shown in \cite{YZ} that
essential spectrum of $\mathcal{L}$ with boundary condition \eqref{bc}
is confined to $\{\lambda:  \Re \lambda <0\}\cup \{0\}$, with an embedded eigenvalue at $\lambda=0$, of multiplicity
one in a generalized sense defined by an associated Evans-Lopatinsky function.
It follows by the general theory of \cite{YZ} that discontinuous hydraulic shock profiles are 
nonlinearly orbitally stable so long as they are weakly spectrally stable in the sense that there exist 
no decaying solutions of \eqref{syseval}-\eqref{bc} on $\{\lambda:  \Re \lambda \geq 0\}\setminus \{0\}$.

In summary, by the analytical results of \cite{MZ2,YZ}, the question of nonlinear stability of hydraulic shock profiles 
has been reduced in both smooth and discontinuous case 
to determination of weak spectral stability, or nonexistence of eigenvalues $\lambda\neq 0$
with $\Re \lambda \geq 0$ of eigenvalue problem \eqref{syseval} on the whole- or half-line, respectively.

The special structure exploited here is that the eigenvalue system \eqref{syseval} may be reduced to a scalar second-order system of generalized
Sturm-Liouville type.
Specifically, following the general approach described in \cite{SYZ18}, the eigenvalue system \eqref{syseval} originating from any $2\times 2$
relaxation system may converted to a scalar second-order equation
\be\label{geneval}
y''+V(x)y= \lambda f_1(x)y + \lambda^2 f_2(x) y.
\ee

In the half-line case, there is in addition a $\lambda$-dependent Robin-type boundary condition
\be\label{robin}
y'(0)=(c_1 + c_2\lambda)y(0), 
\ee
where $f_1(x), f_2(x)>0$, $c_1, c_2<0$, $V(x)<\delta<0$, and Assumptions {\bf (A1)-(A4)} are satisfied for the half- and whole line, respectively. Moreover, $E^u_-(\cdot,0)$ does not intersect $\mathcal{D}$ for the whole line case, and $E^u_-(\cdot,0)$ does not intersect $\colspan{1 \choose c_1}$ for the half line case \cite{SYZ18}. Thus, applying Theorems \ref{bc2_theorem} and \ref{bc1_theorem}, we obtain the following result:

\bt 
Nondegenerate hydraulic shock profiles of the Saint-Venant equations \eqref{sv} are weakly spectrally stable,
across the entire range of existence.
\et

\section{Discussion and open problems}\label{s:disc}
Eigenvalue problems of form \eqref{main}, \eqref{main_s}
were studied in \cite{SYZ18} in connection with stability of hydraulic shock profiles,
or asymptotically constant traveling-wave solutions $w(x,t)=W(x-ct)$ of
the inclined Saint-Venant equations, a $2\times2$ first-order hyperbolic relaxation system
of form 
\be\label{relax}
w_t + f(w)_x= R(w), \quad R=(r,0)^T.
\ee
The eigenvalue equations associated with $W$ are of form $(Aw)'= (E - \lambda) w  $,
where $A(x):=(df/dw-c\Id)(W(x))$ and $E(x):=(dR/dw)(W(x))$.
Solving for one coordinate of $w$ as a linear function of $\lambda$ and the other coordinate
yields a second-order scalar problem in the second coordinate, now quadratic in its dependence on $\lambda$;
see \cite[(1.8), (1.9)]{SYZ18}.
More generally, eigenvalue problems with possibly nonlinear dependence on $\lambda$ are standard in Evans function 
literature \cite{AGJ}, which treats generalized eigenvalue problems of the first-order form $w'=A(\lambda x)w$,
with $A$ analytic in $\lambda$ but not necessarily linear.
For solution by rather different techniques in the fourth-order scalar case
of a quadratic eigenvalue problem related to stability of phase-transitional shock waves, see \cite{Z00}.

In \cite{SYZ18}, the associated eigenvalue problems were shown to be stable, by a combination of classical Sturm--Liouville
techniques, and by-hand arguments making use of special structure as needed.
Here, we generalize and systematize this approach using Maslov index techniques,
to obtain a full Sturm--Liouville theorem giving an exact eigenvalue count in the general case.
The methods used in \cite{HS,HS18} to obtain spectral counts 
of operators on a bounded interval as particularly close to the point of view followed here.
At the same time, we extend the theory from scalar to vector with Hermitian coefficient case,
a task involving interesting issues (Lemma \ref{positivity})
related to monotone matrix functions and L\"owner's theorem \cite{L34};
for further discussion, see Appendix \ref{s:loewner}.

In the scalar case, our results answer the problem posed in \cite{SYZ18} of determining minimal structural
requirements under which one can obtain a complete Sturm--Liouville theorem counting unstable eigenvalues.
In the system case, an interesting open problem is to extend our results to the general, non-Hermitian coefficient case.
We note that even in the Hermitian-coefficient system case, it is not clear how to determine analytically
the number of conjugate points; however, numerical counting gives an attractive alternative to numerical 
Evans function computations/winding number calculations, as described, e.g., in \cite{Z11}.
A second very interesting open problem, noted in \cite{SYZ18} is to determine whether the assumptions of our
theory developed apply to shock profiles of general $2\times 2$ relaxation systems of the type considered in \cite{L87},
and if so, whether these are always stable (as in the Saint Venant case \cite{SYZ18}) or whether one can
find examples of spectrally unstable smooth or discontinuous profiles for amplitudes sufficiently large.

	\appendix

\section{Monotone matrix functions and L\"owner's theorem}\label{s:loewner}
In this appendix, we explore relations between Lemma \ref{positivity} 
and the theory of monotone matrix operators and L\"owner's theorem \cite{L34}.

\subsection{Monotonicity of $f(A)=A^p$, $0<p\leq 1$}\label{s:mon}
We first prove (a variant of) the standard result of monotonicity of $A\to A^p$ (proof adapted from \cite{U}),
in the process establishing a strict convex interpolation inequality for families of commuting matrices.

\bl[Monotonicity of the geometric mean]\label{halflem}
Let $A<B$, $C\leq D$. $A,B,C,D$ symmetric positive definite, and let $A,C$ and $B,D$ commute.  Then,
$$
(AC)^{1/2}<(BD)^{1/2}.
$$
\el

\begin{proof}
$A<B$ and $C<D$ implies $|B^{-1/2}AB^{-1/2}|<1$ and $D^{-1/2}CD^{-12}|\leq 1$, 
	which in turn gives
	$
	|B^{-1/2}AB^{-1/2} D^{-1/2}CD^{-1/2}|<1,
	$
	and thus
	$
	\rho( B^{-1/2}AB^{-1/2} D^{-1/2}CD^{-1/2} ) < 1,
	$
	where $\rho(\cdot)$ denotes spectral radius
	and $|\cdot|$ denotes matrix norm.

	By similiarity, this implies
	$\rho(C^{1/2} D^{-1/2} B^{-1/2}AB^{-1/2} D^{-1/2}C^{1/2} ) < 1,$
	hence, by commutativity of $A,C$ and $B,D$,
	$$
	\rho\big(C^{1/2} (BD)^{-1/2} A^{1/2} (C^{1/2} (BD)^{-1/2} A^{1/2})^*\big)<1,
	$$
or $| C^{1/2} (BD)^{-1/2} A^{1/2}|<1$.  By similarity, this is equivalent to 
$\rho \big((AC)^{1/2} (BD)^{-1/2}\big) <1$, or $ (AC)^{1/2}< (BD)^{1/2} $ as claimed.
\end{proof}

\bc[Matrix interpolation]\label{1cor}
Let $A<B$, $C\leq D$. $A,B,C,D$ symmetric positive definite, and let $A,C$ and $B,D$ commute.  Then,
$A^pC^{1-p}< B^pD^{1-p}$ for all $0< p\leq 1$.
\ec

\begin{proof}
By repeated application of Lemma \ref{halflem}, we obtain the result for any dyadic $p$, giving 
$A^pC^{1-p}\leq  B^pD^{1-p}$ for general $p$ by continuity.
Noting that any $0< p\leq 1$ may be expressed as the geometric mean of a dyadic $0<p_1\leq p$ and a general 
$p\leq p_2\leq 1$, we obtain strict inequality for general $p$ as well.
\end{proof}

\bc[Monotonicity of $A^p$ \cite{L34}]\label{moncor}
For $A<B$, $A^p<B^p$ for any $0<p\leq 1$.
\ec

\begin{proof}
	Take $C=D=\Id$ in Corollary \ref{1cor}.
\end{proof}

\br
From Corollary \ref{moncor}, we obtain already \emph{nonnegativity}, $(A^p)'\geq 0$ for $A'\geq 0$,
of the derivative of the matrix function $f(A)=A^p$, for any $0\leq p\leq 1$.
\er

\subsection{Connection to L\"owner's matrix}\label{s:lconn}
%
%
%
%

\bpr[\cite{L34}]\label{fprop}
Let $A(t)>0$ be symmetric and $R(t)$ an orthogonal matrix of eigenvectors of $A(t)$,
with $A(t)R(t)=R(t)D(t)$, $D$ diagonal, and $f$ differentiable. Then 
\be\label{form}
\big( R^T(d/dt)(f(A) R\big)_{jk}= 
\big(R^T A'R\big)_{jk} \frac{f(d_j)-f(d_k)}{d_j-d_k}. 
\ee
\epr

\begin{proof}
	From
	\be\label{eig}
	A(t)R(t)= R(t)D(t), \quad R^T(t)A(t)=D(t) R^T(t), 
	\ee
	we obtain, differentiating \eqref{eig}(i), $A'R+ AR'-R'D-RD'=0$, whence, applying $R^T$ on the left and using
	\eqref{eig}(ii), we get
	$$
	\begin{aligned}
	0=R^TA'R+ R^TAR'- R^T R'D- R^TRD'&= R^TA'R+ D R^TR'- R^T R'D- R^TRD'\\
				  &=
	R^TA'R+ D R^TR'- R^T R'D- D'.
	\end{aligned}
	$$
	From this we may deduce that 
\be\label{facts}
D'=\diag R^TA'R;\quad
( R^TR')_{jk}= \frac{R^TA'R }{d_j-d_k}, 
	\, j\neq k.
		\ee
	
		Differentiating $f(A)=Rf(D)R^T$, gives 
		$(A^{1/2})'=R'D^{1/2}R^T + Rf'(D)'R^T + Rf(D) (R^T)'$, whence,
multiplying on the left by $R^T$ and the right by $R$, and using $R^TR=\Id$ and $(R^T)'R=-R^TR'$, we have
\ba
R^T f(A)'R&=R^TR'f(D) + f'(D) + f(D) (R^T)'R  \\
			       &= f'(D) + \Big(R^TR' f(D) - f(D)R^T R' \Big). 
\ea
Combining this with \eqref{facts} then gives \eqref{form}.
\end{proof}

\begin{definition}
	The L\"owner matrix is defined as $L_{jk}= \frac{f(d_j)-f(d_k)}{d_j-d_k}$.
\end{definition}

\bc[\cite{L34}] The matrix function $f(A)$ is nonstrictly monotone, $(d/dt)f(A)\geq 0$ for $(d/dt)A\geq 0$,
if and only if the L\"owner matrix $L_{ji}$ is positive semidefinite.
\ec

\begin{proof}
Since $P:=R^TA'R\geq 0$ if and only if $A'\geq 0$, this is equivalent to the statement that
$Q_{jk}:=L_{jk}P_{jk}\geq 0$ for all symmetric $P\geq 0$.
Assume that $Q\geq 0$ for any $P\geq 0$. Then 
in particular, we have for any vector $v$, taking $P=vv^T$, that
$$
x^TQX=
\sum_{j,k} y_j L_{jk} y_k\geq 0,
$$
$y_j:= v_j x_j$, for all choices of $x$, $v$, hence all choices of $y_j$.
This gives $L_{jk}\geq 0$.
On the other hand, if $L_{jk}\geq 0$, then, expanding any symmetric $P\geq 0$ as
$P=\sum_i \mu_i v^i (v^i)^T$, $\mu_i\geq 0$ we have, setting $y^i_j:=v^i_j x_j$,
\be\label{ref}
x^TQx= \sum_i \mu_i  \sum_{jk} y^i_j L_{jk} y^i_k\geq 0.
\ee
\end{proof}

\bpr[Positivity of $f(A)'$]\label{newprop} The matrix function $f(A)$ satisfies $(d/dt)f(A)> 0$ for $(d/dt)A> 0$,
if and only if the L\"owner matrix $L_{ji}$ is positive semidefinite and $f'(t)>0$.
\epr

\begin{proof}
	By \eqref{ref}, and $L_{k}\geq 0$, we have $Q>0$ if and only if $L y^i\not \equiv 0$ for all $i$ for
$y^i_j:= v^i_j x_j$  and all choices of $x$, $v^i$, where $L$ is the L\"owner matrix associated with $D$.
By considering $A$ diagonal, we find that $f'>0$ is a necessary condition, along with semidefiniteness of
$L$ as established in Corollary \ref{moncor}.
To see that they are sufficient, note that $f'>0$ implies that the coefficients of $L$ are positive.
By the Frobenius--Perron theorem, therefore, it has a principal eigenvector $w$ with positive entries $w_j$,
and $w$ has eigenvalue $\nu>0$.
Thus,
$$
x^TQx= \sum_i \mu_i \sum_{jk}  y^i_j L_{jk} y^i_k\geq 
\sum_i \mu_i \nu \frac{(\sum_{j}  y^i_j w_j )^2}{|w|}>0
$$
unless $0= \sum_j y^i_j w_j= \sum_j v^i_j (w_j x_j) $ for all $i$.  As $\{v^i\}$ is a basis, this
would imply $w_j x_j=0$, which, by $w_j>0$, would imply $x_j=0$ for all $j$, or $x=0$.
Thus, $Q>0$ and we are done.
\end{proof}

\bc\label{newcor} The matrix function $f(A)=A^p$ has positive derivative, $(d/dt)f(A)> 0$ for $A>0$, $(d/dt)A> 0$,
for all $0<p\leq 1$.
\ec

\begin{proof}
	By Corollary \ref{moncor}, $f$ is nonstrictly monotone, hence $f(A)'\geq 0$ and $L\geq 0$. Since
	$f'>0$ by inspection, we are done.
\end{proof}

\br
The conclusions and methods regarding nonstrict monotonicity are standard.
However, our conclusions regarding strict positivity of $f(A)'$ so far as we know are new.
\er

\subsection{Implications}\label{s:imp}
The conclusions of Corollaries \ref{moncor}, \ref{newcor} imply interesting inequalities on the
associated L\"owner matrices. For example, in the case of the square root function $f(A)=A^{1/2}$,
the associated L\"owner matrix is
$L_{jk}= 
\frac{ d_j^{1/2}-d_k^{1/2}}{d_j-d_k}= \frac{ 1}{d_j^{1/2}+d_k^{1/2}}$,
which must therefore be semidefinite.
We conjecture that for every dimension $n$, and $d_j$ distinct, 
$$
\det \Big( \frac{1}{d^{1/2}_j+d^{1/2}_k}\Big)=  \frac{\Pi_{j> k} (d^{1/2}_j-d^{1/2}_k)^2} {\Pi_{j > k} (d^{1/2}_j+d^{1/2}_k)^2\Pi_j 
2d^{1/2}_j},
$$
giving positive definiteness of $L$ by induction on principal minors.

\section{Essential spectrum}\label{essspect}
First, we consider the limiting operator pencil $\mathcal{L_{-\infty}}(\lambda)$ and the corresponding first order operator pencil
$\mathcal{T_{-\infty}}(\lambda)$:
\begin{align}\label{Linf}
\begin{split}
&\mathcal{L_{-\infty}}(\lambda):\dom(\mathcal{L}_{-\infty}(\lambda))\subset( L^2(\BbbR_-))^n\to( L^2(\BbbR_-))^n,\\
&\mathcal{L_{-\infty}}(\lambda)y:=y'' + V_ -y- \lambda f_{1-} y-\lambda^2 f_{2-} y, \,y\in\dom(\mathcal{L}_{-\infty}(\lambda)),
\quad x\in\BbbR_-,\\
&\dom(\mathcal{L}_{-\infty}(\lambda))=\{y\in( H^2(\BbbR_-))^n:(c+\phi(\lambda))y(0)- y'(0)=0\},\\
&\mathcal{T_{-\infty}}(\lambda):\dom(\mathcal{T}_{-\infty}(\lambda))\subset( L^2(\BbbR_-))^{2n}\to( L^2(\BbbR_-))^{2n},\\
&\mathcal{T_{-\infty}}(\lambda)Y:=Y'-A_{-}(\lambda)Y, \,\,A_{-}(\lambda) = 
\begin{pmatrix}
0 & I \\
\lambda f_{1-}+\lambda^2 f_{2-}-V_- & 0
\end{pmatrix}, \,Y\in\dom(\mathcal{T}_{-\infty}(\lambda)),\\
&\dom(\mathcal{T}_{-\infty}(\lambda))=\{y\in( H^1(\BbbR_-))^{2n}:(c+\phi(\lambda)\,\,\, -I)Y(0)=0\}.\\
\end{split}
\end{align}
When $A_{-}(\lambda)$ is hyperbolic, its stable  $E^s_-(\lambda)$ and unstable $E^u_-(\lambda)$ subspaces yield direct sum decomposition of $\BbbC^{2n}$. We denote by $P^s_-(\lambda)$ and $P^u_-(\lambda)$ the corresponding eigenprojections. Moreover, in this case, the system $Y'=A_{-}(\lambda)Y$ possesses the exponential dichotomy on $\BbbR_-$.

Let $\{\nu_j(\lambda)\}_{j=1}^n$ denote the eigenvalues of the matrix pencil $\lambda f_{1-}+\lambda^2 f_{2-}-V_-$. We introduce $\{\mu_j^{\pm} (\lambda)\}_{j=1}^n$
\begin{equation*}
\begin{aligned}
\mu_j^{\pm} (\lambda) =  \mp\sqrt{\nu_j(\lambda)}
\end{aligned}
\end{equation*}
that are are precisely the eigenvalues of ${A}_{-}(\lambda)$.  Hence,  $A_{-}$ is not hyperbolic at $\lambda\in\BbbC$ if and only if $\det(-\mu^2 + V_- -\lambda f_{1-} -\lambda^2 f_{2-})=0$ for some $\mu\in\BbbR$.  In particular, Assumption {\bf (A1)} guaranties that there exists an open subset denoted by $\Omega$ containing the closed right half plane that consists of the points $\lambda$ such that  $A_{-}$ is hyperbolic and $n^u_-(\lambda)=\dim E^u_-(\lambda)=n$.

Next, we look for an $H^1(\BbbR_-)$ solution of $Y'=A_{-}(\lambda)Y+F$, where $F\in( L^2(\BbbR_-))^{2n}$. In what follows, we will suppress $\lambda$ dependence. By variation of parameters formula, we have
\begin{align*}
Y(x)=e^{A_-x}Y_0+\int_0^xe^{A_-(x-t)}F(t)dt,\,\,\,x\leq0,
\end{align*}
where $Y_0$ is the initial data. Or,
\begin{align*}
Y(x)=e^{A_-x}P_-^uY_0+e^{A_-x}P_-^sY_0+\int_0^xe^{A_-(x-t)}P_-^uF(t)dt+\int_0^xe^{A_-(x-t)}P_-^sF(t)dt.
\end{align*}
Finally, we can rewrite it as follows:
\begin{align*}
Y(x)=e^{A_-x}P_-^uY^-_0+e^{A_-x}P_-^sY^-_0-\int^0_xe^{A_-(x-t)}P_-^uF(t)dt+\int_{-\infty}^xe^{A_-(x-t)}P_-^sF(t)dt,
\end{align*}
where 
\begin{equation}
Y^-_0=Y_0-\int_{-\infty}^0e^{-A_-t}P_-^sF(t)dt.
\end{equation}
Once again, we can rewrite the solution $Y$ by using the Green's function \\ $G(z)=\begin{cases}
-e^{A_-z}P_-^u, & \text{if $z\leq0$},\\
e^{A_-z}P_-^s, & \text{$z>0$}.
\end{cases}$
\begin{align*}
Y(x)=e^{A_-x}P_-^uY^-_0+e^{A_-x}P_-^sY^-_0+(G*\hat F)(x),
\end{align*}
where $\hat F(x)=\begin{cases}
F(x), & \text{if $x\leq0$},\\
0, & \text{$x>0$}.
\end{cases}$. Note that $\|G*\hat F\|_2\leq C\|G\|_2\|F\|_2$. Then the solution $Y$ belongs to $(L^2(\BbbR_-))^{2n}$ if and only if $Y^-_0:=Y(0)-\int_{-\infty}^0e^{-A_{-}t}P_-^sF(t)dt\in E^u_-$, that is,
\begin{align}\label{sol}
Y(x)=e^{A_-x}P_-^uY^-_0+(G*\hat F)(x).
\end{align}

Fix $\lambda\in\Omega$ and denote by $E(\lambda)$ the $2n\times2n$ matrix $((I \,\,c+\phi(\lambda))^T\,\,\,v_1^u(\lambda)\,\,\,\ldots\,\,\,v_{n}^u(\lambda))$, where $n=\dim E^u_-$ and $\{v_j^u\}_{j=1}^{n}$ form a basis for $E^u_-$. If $\det(E(\lambda))\neq0$, then there exists $\alpha\in\BbbC^{2n}$ such that 
\begin{equation}\label{B4}
E(\lambda)\alpha=\int_{-\infty}^0e^{-tA_-}P_-^sF(t)dt,
\end{equation}
which guaranties the existence of the solution $Y$ of $Y'=A_{-}(\lambda)Y+F$ that satisfies the boundary condition at $0$ and such that $Y^-_0\in E^u_-$, therefore, by formula \eqref{sol},
$Y\in H^1(\BbbR_-)$ and $\lambda$ belongs to the resolvent set of the operator pencil  $\mathcal{T_{-\infty}}(\cdot)$. Similarly,  let $F=(0,f)^T$, where $f\in( L^2(\BbbR_-))^{n}$ and $\det(E(\lambda))\neq0$, then the existence of $y\in\dom(\mathcal{L}_{-\infty}(\lambda))$ such that $\mathcal{L}_{-\infty}(\lambda)y=f$, therefore, 
$\lambda$ belongs to the resolvent set of the operator pencil  $\mathcal{L_{-\infty}}(\cdot)$. 

Before we prove the next lemma, we introduce the adjoint operator pencils $\mathcal{T}^*_{-\infty}(\lambda)$ $\mathcal{L}^*_{-\infty}(\lambda)$:
\begin{align}\label{Linfadj}
	\begin{split}
		&\mathcal{L^*_{-\infty}}(\lambda):\dom(\mathcal{L}^*_{-\infty}(\lambda))\subset( L^2(\BbbR_-))^n\to( L^2(\BbbR_-))^n,\\
		&\mathcal{L^*_{-\infty}}(\lambda)y:=y'' + V_ -y- \bar\lambda f_{1-} y-\bar\lambda^2 f_{2-} y, \,y\in\dom(\mathcal{L}^*_{-\infty}(\lambda)),
		\quad x\in\BbbR_-,\\
		&\dom(\mathcal{L}_{-\infty}(\lambda))=\{y\in( H^2(\BbbR_-))^n:(c+\phi^*(\lambda))y(0)- y'(0)=0\},\\
		&\mathcal{T^*_{-\infty}}(\lambda):\dom(\mathcal{T}_{-\infty}(\lambda))\subset( L^2(\BbbR_-))^{2n}\to( L^2(\BbbR_-))^{2n},\\
		&\mathcal{T^*_{-\infty}}(\lambda)Y:=Y'+A^*_{-}(\lambda)Y, \,\,A^*_{-}(\lambda) = 
		\begin{pmatrix}
			0 & \bar\lambda f_{1-}+\bar\lambda^2 f_{2-}-V_- \\
			I & 0
		\end{pmatrix}, \,Y\in\dom(\mathcal{T}^*_{-\infty}(\lambda)),\\
		&\dom(\mathcal{T}^*_{-\infty}(\lambda))=\{y\in( H^1(\BbbR_-))^{2n}:(I\,\,\, c+\phi^*(\lambda))Y(0)=0\}.\\
	\end{split}
\end{align}
Furthermore,  $F\in\ran(\mathcal{T^*_{-\infty}}(\lambda))$ if and only if there exists $\alpha\in\BbbC^{2n}$ such that 
\begin{equation}\label{adj}
\hat E(\lambda)\alpha=\int_{-\infty}^0e^{-tA^*_-}(I-(P^{s}_-)^*)F(t)dt,
\end{equation}
where $\hat E(\lambda)$ denotes the $2n\times2n$ matrix $(( c+\phi^*(\lambda)\,\,-I)^T\,\,\,\hat v_1^u(\lambda)\,\,\,\ldots\,\,\,\hat v_{n}^u(\lambda))$, where $n=\dim (\ran(I-(P^{u}_-)^*))$ and $\{\hat v_j^u\}_{j=1}^{n}$ form a basis for $\ran(I-(P^{u}_-)^*)$, where $I-(P^{u}_-)^*$ is the exponential dichotomy projection  for the system $Y'=-A^*_-(\lambda)Y$ on $\BbbR_-$. 

The following lemma holds:
\begin{lemma}\label{Fredlim}
	Let Assumption {\bf (A1)} hold and fix $\lambda\in\Omega$. Then $\ran(\mathcal{L_{-\infty}}(\lambda))$ and $\ran(\mathcal{T_{-\infty}}(\lambda))$ are closed and 
	\begin{align*}
	\dim(\ker(\mathcal{L_{-\infty}}(\lambda)))&=\dim(\ker(\mathcal{T_{-\infty}}(\lambda)))=	\dim(\ker(E(\lambda))),\,\,\,\\\codim(\ran(\mathcal{L_{-\infty}}(\lambda)))&=\codim(\ran(\mathcal{T_{-\infty}}(\lambda)))=	\codim(\ran(E(\lambda))).
	\end{align*}
	Moreover, $\mathcal{L_{-\infty}}(\lambda)$ and $\mathcal{T_{-\infty}}(\lambda)$ are Fredholm with index $0$.
\end{lemma}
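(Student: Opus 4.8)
The plan is to reduce the Fredholm properties of the constant-coefficient operator pencils $\mathcal{L}_{-\infty}(\lambda)$ and $\mathcal{T}_{-\infty}(\lambda)$ to finite-dimensional linear algebra encoded in the $2n\times 2n$ matrix $E(\lambda)$, exploiting the explicit variation-of-parameters representation \eqref{sol} already derived in the excerpt. First I would establish closedness of the ranges: for $\mathcal{T}_{-\infty}(\lambda)$, any $F\in(L^2(\BbbR_-))^{2n}$ lies in the range iff the compatibility condition \eqref{B4} can be solved for $\alpha$, i.e. iff $\int_{-\infty}^0 e^{-tA_-}P_-^sF(t)\,dt\in\ran(E(\lambda))$; since $\ran(E(\lambda))$ is a closed (finite-dimensional) subspace and the map $F\mapsto \int_{-\infty}^0 e^{-tA_-}P_-^sF(t)\,dt$ is bounded on $L^2(\BbbR_-)$ (using the exponential dichotomy on $\BbbR_-$, which makes $t\mapsto e^{-tA_-}P_-^s$ exponentially decaying as $t\to-\infty$), the preimage of a closed set under a bounded linear map is closed, so $\ran(\mathcal{T}_{-\infty}(\lambda))$ is closed; the analogous argument with $F=(0,f)^T$ handles $\mathcal{L}_{-\infty}(\lambda)$.

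Next I would compute the kernel dimensions. An element of $\ker(\mathcal{T}_{-\infty}(\lambda))$ is an $H^1(\BbbR_-)$ solution of $Y'=A_-(\lambda)Y$ satisfying the boundary condition $(c+\phi(\lambda)\ \ -I)Y(0)=0$; by the dichotomy such a solution must have $Y(0)\in E_-^u$, so $Y(0)$ ranges over $E_-^u\cap\ker(c+\phi(\lambda)\ \ -I)$, and one checks directly that the dimension of this intersection equals $\dim\ker E(\lambda)$ (the columns of $E(\lambda)$ are exactly $(I\ \ c+\phi(\lambda))^T$ together with a basis of $E_-^u$, so a nontrivial kernel relation of $E(\lambda)$ is precisely a vector of $E_-^u$ annihilated by the boundary functional). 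The same bijection, at the level of $y=Y_1$, gives $\dim\ker\mathcal{L}_{-\infty}(\lambda)=\dim\ker E(\lambda)$. For the cokernel, I would identify $\codim\ran(\mathcal{T}_{-\infty}(\lambda))$ with $\dim\ker(\mathcal{T}^*_{-\infty}(\lambda))$ using closedness of the range together with the adjoint pencil \eqref{Linfadj}: by \eqref{adj}, $F\in\ran(\mathcal{T}^*_{-\infty}(\lambda))$ iff a compatibility vector exists against $\hat E(\lambda)$, and running the kernel computation for $\mathcal{T}^*_{-\infty}(\lambda)$ shows $\dim\ker\mathcal{T}^*_{-\infty}(\lambda)=\dim\ker\hat E(\lambda)=\codim\ran(E(\lambda))$, where the last equality is the rank--nullity relation $\dim\ker\hat E(\lambda)=2n-\rank E(\lambda)=\codim\ran E(\lambda)$ (here one must verify that $\hat E(\lambda)$ is, up to an invertible change of basis, the conjugate-transpose complement of $E(\lambda)$, which follows from the symplectic pairing between $E_-^u$ for $A_-$ and $\ran(I-(P^u_-)^*)$ for $-A_-^*$ and from the duality between the boundary condition and its adjoint). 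Finally, combining $\dim\ker\mathcal{T}_{-\infty}(\lambda)=\dim\ker E(\lambda)$ with $\codim\ran\mathcal{T}_{-\infty}(\lambda)=\codim\ran E(\lambda)$ and the fact that $E(\lambda)$ is a square matrix (so $\dim\ker E(\lambda)=\codim\ran E(\lambda)$) yields index $0$; the identical bookkeeping for $\mathcal{L}_{-\infty}(\lambda)$ finishes the proof.

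The main obstacle I anticipate is the cokernel identification: making precise that $\codim\ran(\mathcal{T}_{-\infty}(\lambda))=\dim\ker(\mathcal{T}^*_{-\infty}(\lambda))$ requires knowing the range is closed (handled in step one) and a clean description of the $L^2$-adjoint of a first-order operator with a $\lambda$-dependent boundary condition on a half-line, and then correctly matching the finite-dimensional data: one must check that the matrix $\hat E(\lambda)$ of \eqref{adj} has $\dim\ker\hat E(\lambda)=\codim\ran E(\lambda)$. This is where the symplectic/duality structure — that $E_-^s(\lambda)$ is the $J$-orthogonal complement of $E_-^u(\lambda)$, and that the adjoint boundary subspace is the annihilator of the original one — does the work, and I would expect to spend most of the effort there; once the two $2n\times 2n$ matrices are correctly paired, the index-zero conclusion is immediate from squareness.
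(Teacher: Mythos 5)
Your proposal follows essentially the same route as the paper's own proof: closedness of the ranges via boundedness of $F\mapsto\int_{-\infty}^0 e^{-tA_-}P_-^sF(t)\,dt$ together with closedness of $\ran(E(\lambda))$, identification of the kernels with $\ker(E(\lambda))$, identification of the cokernels through the adjoint pencils and the matrix $\hat E(\lambda)$ (the paper carries out your anticipated duality step by showing $\ker\hat E(\lambda)=\ker(E^*(\lambda))$ via orthogonal complements of the boundary subspace and of $\ran(P_-^u(\lambda))$), and index zero from the squareness of $E(\lambda)$. The argument is correct and matches the paper's proof in all essential steps.
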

\begin{proof}
	It is clear from \eqref{sol} that $F\in\ran(\mathcal{T_{-\infty}}(\lambda))$ if and only if $\int_{-\infty}^0e^{-tA_-}P_-^sF(t)dt\in\ran(E(\lambda))$. Since $\ran(E(\lambda))$ is closed and $F\to\int_{-\infty}^0e^{-tA_-}P_-^sF(t)dt$ is continuous in $( L^2(\BbbR_-))^{2n}$, it follows that $\ran(\mathcal{T_{-\infty}}(\lambda))$ is closed. Similarly, by choosing $F=(0,f)^T$ and constructing a continuous map $f\to\int_{-\infty}^0e^{-tA_-}P_-^sF(t)dt$ in $( L^2(\BbbR_-))^{n}$, we deduce that $\ran(\mathcal{L_{-\infty}}(\lambda))$ is closed. 
	
	Also, it is clear that $y\in\ker(\mathcal{L_{-\infty}}(\lambda))$ if and only if $(y,y')^T\in\ker(\mathcal{T_{-\infty}}(\lambda))$ and both are in one-to-one correspondence with an $\alpha\in\ker(E(\lambda))$ (note that $\ker(E(\lambda))=\colspan{I \choose c+ \phi(\lambda)}\cap\ran(P_-^u(\lambda))$).
	
	Finally, we know that $\codim(\ran E(\lambda))=\dim(\ker(E^*(\lambda)))$ and
	\begin{align}\label{kernel}
	\begin{split}
	\ker(E^*(\lambda))&=(\colspan{I \choose c+ \phi(\lambda)}\cup\ran(P_-^u(\lambda)))^\perp\\
	&=\colspan^\perp{I \choose c+ \phi(\lambda)}\cap\ran^\perp(P_-^u(\lambda))\\
	&=\colspan{c+ \phi^*(\lambda) \choose -I}\cap\ran(I-(P^{u}_-(\lambda))^*)=\ker\hat E(\lambda).
	\end{split}
	\end{align}
	Since it is clear from \eqref{adj} that $y\in\ker(\mathcal{L^*_{-\infty}}(\lambda))$ if and only if $(-y',y)^T\in\ker(\mathcal{T^*_{-\infty}}(\lambda))$ and both are in one-to-one correspondence with an $\alpha\in\ker(\hat E(\lambda))$, by \eqref{kernel}, we have 
	\begin{equation}
	\dim(\ker(\mathcal{L^*_{-\infty}}(\lambda)))=	\dim(\ker(\mathcal{T^*_{-\infty}}(\lambda)))=\dim(\ker \hat E(\lambda))=\dim(\ker(E^*(\lambda))),
	\end{equation}

	 Finally, $\mathcal{L_{-\infty}}(\lambda)$ and $\mathcal{T_{-\infty}}(\lambda)$ are Fredholm with index $0$ due to the following identity:
	 \begin{equation*}
	 \dim(\ker(E(\lambda)))-	\codim(\ran(E(\lambda)))=(2n-\codim(\ker(E(\lambda))))-(2n-\dim(\ran(E(\lambda))))=0.
	 \end{equation*}
\end{proof}

Now, we would like to mimic the above analysis for the operator pencil $\mathcal{L_-}(\cdot)$. Assumption {\bf (A1)} guaranties the existence of exponential dichotomy on $\BbbR_-$ for $\lambda\in\Omega$ for the system:
\begin{align}\label{sys}
Y'=A(x,\lambda)Y, \,\,A(x,\lambda) = 
\begin{pmatrix}
0 & I \\
\lambda f_{1}(x)+\lambda^2 f_{2}(x)-V(x) & 0
\end{pmatrix},
\end{align} 
which is due to the roughness theorem of
exponential dichotomies. That is, there exist a projection $P$ and constants $K_i>0, \alpha_i>0$ such that for all $x,t\in\BbbR_-$
\begin{align}
\begin{split}
|U(x)PU^{-1}(t)|&\leq K_1e^{-\alpha_1(x-t)},\,\,\,t\leq x,\\
|U(x)(1-P)U^{-1}(t)|&\leq K_2e^{-\alpha_2(t-x)},\,\,\,t\geq x,\\
\end{split}
\end{align}
where $U(x)$ ($U(0)=I$) is the fundamental matrix for \eqref{sys}.
\begin{align*}
Y(x)=U(x)Y_0+\int_0^xU(x)U^{-1}(t)F(t)dt,\,\,\,x\leq0,
\end{align*}
where $Y_0$ is the initial data. Or,
\begin{align*}
Y(x)=U(x)PY^-_0+U(x)(1-P)Y^-_0-\int^0_xU(x)(1-P)U^{-1}(t)F(t)dt+\int_{-\infty}^xU(x)PU^{-1}F(t)dt,
\end{align*}
where 
\begin{equation}
Y^-_0=Y_0-\int_{-\infty}^0PU^{-1}(t)F(t)dt.
\end{equation}

Once again, we can rewrite the solution $Y$ by using the Green's function \\ $G(x,t)=\begin{cases}
-U(x)(1-P)U^{-1}(t), & \text{if $x\leq t$},\\
U(x)PU^{-1}(t), & \text{$x>t$}.
\end{cases}$
\begin{align*}
Y(x)=U(x)PY^-_0+U(x)(1-P)Y^-_0+\int_{-\infty}^0G(x,t)F(t)dt,
\end{align*}
where the integral term on the right hand side is $L^2$-integrable with respect to $x$. 
Then the solution $Y$ belongs to $(L^2(\BbbR_-))^{2n}$ if and only if $Y^-_0:=Y_0-\int_{-\infty}^0PU^{-1}(t)F(t)dt\in E^u_-:=\ran(I-P)$, that is,
\begin{align}\label{sol}
Y(x)=U(x)(1-P)Y^-_0+\int_{-\infty}^0G(x,t)F(t)dt.
\end{align}

Fix $\lambda\in\Omega$ and denote by $E_-(\lambda)$ the $2n\times2n$ matrix $((I \,\,c+\phi(\lambda))^T\,\,\,v_1^u(\lambda)\,\,\,\ldots\,\,\,v_{n}^u(\lambda))$, where $n=\dim E^u_-=\dim\ran(I-P)$ and $\{v_j^u\}_{j=1}^{n}$ form a basis for $E^u_-=\ran(I-P)$. If $\det(E_-(\lambda))\neq0$, then there exists $\alpha\in\BbbC^{2n}$ such that 
\begin{equation}
E_-(\lambda)\alpha=\int_{-\infty}^0PU^{-1}(t)F(t)dt,
\end{equation}
which guaranties the existence of $Y(0)$ that satisfies the boundary condition at $0$ and such that $Y^-_0\in E^u_-=\ran(I-P)$, therefore, 
$\lambda$ belongs to the resolvent set of the operator pencil $\mathcal{T_{-}}(\cdot)$. Similarly,  let $F=(0,f)^T$, where $f\in( L^2(\BbbR_-))^{n}$ and $\det(E(\lambda))\neq0$, then the existence of $y\in\dom(\mathcal{L}_{-\infty}(\lambda))$ such that $\mathcal{L}_{-\infty}(\lambda)y=f$, therefore, 
$\lambda$ belongs to the resolvent set of the operator pencil  $\mathcal{L_{-\infty}}(\cdot)$. 

Furthermore, the following lemma holds:
\begin{lemma}\label{b3}
	Let Assumption {\bf (A1)} hold and fix $\lambda\in\Omega$.Then $\ran(\mathcal{L_{-}}(\lambda))$ and $\ran(\mathcal{T_{-}}(\lambda))$ are closed and 
	\begin{align*}
	\dim(\ker(\mathcal{L_{-}}(\lambda)))&=\dim(\ker(\mathcal{T_{-}}(\lambda)))=	\dim(\ker(E_-(\lambda))),\,\,\,\\\codim(\ran(\mathcal{L_{-}}(\lambda)))&=\codim(\ran(\mathcal{T_{-}}(\lambda)))=	\codim(\ran(E_-(\lambda))).
	\end{align*}
	Moreover, $\mathcal{L_{-}}(\lambda)$ and $\mathcal{T_{-}}(\lambda)$ are Fredholm with index $0$.
\end{lemma}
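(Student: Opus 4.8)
The plan is to mirror the proof of Lemma~\ref{Fredlim}, replacing the constant-coefficient propagator $e^{A_-x}$ and spectral projections $P_-^{s,u}$ by the fundamental matrix $U(x)$ of the variable-coefficient system \eqref{sys} and the exponential dichotomy projection $P$ on $\BbbR_-$, which exists for $\lambda\in\Omega$ by the roughness theorem, as recalled just before the statement. The key point is that the Green's function representation \eqref{sol} already reduces everything to a finite-dimensional condition: a solution of $Y'=A(x,\lambda)Y+F$ with $F\in(L^2(\BbbR_-))^{2n}$ satisfying the boundary condition $(c+\phi(\lambda)\ {-}I)Y(0)=0$ lies in $(L^2(\BbbR_-))^{2n}$ if and only if $\int_{-\infty}^0 PU^{-1}(t)F(t)\,dt\in\ran(E_-(\lambda))$, and its failure to be unique is measured by $\ker(E_-(\lambda))$.

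First I would establish closedness of the ranges. The map $F\mapsto\int_{-\infty}^0 PU^{-1}(t)F(t)\,dt$ is bounded from $(L^2(\BbbR_-))^{2n}$ into $\BbbC^{2n}$ by the dichotomy bound $|U(0)PU^{-1}(t)|\le K_1e^{\alpha_1 t}$ for $t\le 0$, and $\ran(E_-(\lambda))$ is a (closed) subspace of $\BbbC^{2n}$, so $\ran(\mathcal{T}_-(\lambda))$, being the preimage of a closed set under a bounded map, is closed; restricting to $F=(0,f)^t$ gives closedness of $\ran(\mathcal{L}_-(\lambda))$ the same way, using also that convolution with $G(x,t)$ maps $L^2(\BbbR_-)$ to $L^2(\BbbR_-)$ with norm controlled by the dichotomy constants (a Schur-type estimate). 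For the kernels, $y\in\ker(\mathcal{L}_-(\lambda))$ if and only if $(y,y')^t\in\ker(\mathcal{T}_-(\lambda))$, and by \eqref{sol} with $F=0$ such solutions are exactly $x\mapsto U(x)(I-P)Y_0^-$ with $Y_0^-\in\ran(I-P)$ meeting the boundary condition; this identifies $\ker(\mathcal{T}_-(\lambda))$ with $\colspan{I \choose c+\phi(\lambda)}\cap\ran(I-P)=\ker(E_-(\lambda))$.

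For the cokernel I would introduce the adjoint pencils $\mathcal{T}_-^*(\lambda)$ and $\mathcal{L}_-^*(\lambda)$ exactly as in \eqref{Linfadj} but with $A(x,\lambda)$ in place of $A_-(\lambda)$, use that $Y'=-A^*(x,\lambda)Y$ also admits an exponential dichotomy on $\BbbR_-$ with projection $I-P^*$, and derive the analogue of \eqref{adj}: $F\in\ran(\mathcal{T}_-^*(\lambda))$ iff $\hat E_-(\lambda)\alpha=\int_{-\infty}^0 (U^{-1}(t))^*(I-P^*)F(t)\,dt$ for some $\alpha$, where $\hat E_-(\lambda)$ is the $2n\times 2n$ matrix formed from $(c+\phi^*(\lambda)\ {-}I)^t$ together with a basis of $\ran(I-P^*)$. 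Then $\codim(\ran(\mathcal{T}_-(\lambda)))=\dim\ker(\mathcal{T}_-(\lambda)^*)=\dim\ker(\hat E_-(\lambda))$, and the orthogonal-complement computation of \eqref{kernel} (with $\ran(P_-^u)$ replaced by $\ran(I-P)$) gives $\ker(\hat E_-(\lambda))=\ker(E_-^*(\lambda))$, hence $\codim(\ran(\mathcal{T}_-(\lambda)))=\codim(\ran(E_-(\lambda)))$; the same argument with $F=(0,f)^t$ handles $\mathcal{L}_-(\lambda)$. Finally, since $E_-(\lambda)$ is a square $2n\times 2n$ matrix, $\dim\ker(E_-(\lambda))=\codim(\ran(E_-(\lambda)))$, so $\mathcal{L}_-(\lambda)$ and $\mathcal{T}_-(\lambda)$ are Fredholm of index $0$.

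The main obstacle I anticipate is the cokernel bookkeeping: because $P$ is only a dichotomy projection (not orthogonal or spectral), one must carefully justify that the dual system carries a dichotomy with projection exactly $I-P^*$ and that the orthogonal complements of $\ran(I-P)$ and of the boundary column space $\colspan{I \choose c+\phi(\lambda)}$ combine to give precisely $\ker(\hat E_-(\lambda))$, reproducing the limiting computation \eqref{kernel}. The $L^2$-boundedness of the Green's operator in \eqref{sol} is routine once the dichotomy estimates are in hand, but should be stated explicitly rather than merely asserted.
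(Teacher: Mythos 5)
Your proposal is correct and follows essentially the same route as the paper: the paper's proof of this lemma simply states that it proceeds as in Lemma \ref{Fredlim}, with the key relation $F\in\ran(\mathcal{T}_-(\lambda))$ if and only if $\int_{-\infty}^0 PU^{-1}(t)F(t)\,dt\in\ran(E_-(\lambda))$, which is exactly the reduction you use. Your write-up merely fills in the same details (closed range via the bounded map into $\BbbC^{2n}$, kernel identification with $\ker(E_-(\lambda))$, adjoint pencil and the analogue of \eqref{kernel} for the cokernel, and index $0$ from $E_-(\lambda)$ being square) that the paper delegates to the constant-coefficient case.
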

\begin{proof}
	The proof is similar to that of Lemma \ref{Fredlim}, and a key relation is
	\begin{equation*}
	F\in\ran(\mathcal{T}_-(\lambda)) \iff  \int_{-\infty}^0PU^{-1}(t)F(t)dt\in\ran(E_-(\lambda)).
	\end{equation*}
\end{proof}

Let us recall the definition of multiplicity of eigenvalues of nonlinear pencils (cf. \cite{BLR,MM}).
\begin{definition}
	Let $\lambda_0$ be an eigenvalue of the pencil $\mathcal{T}(\cdot)$. 
	\begin{enumerate}
		\item A tuple $(v_0,\ldots,v_{n-1})\in(\dom(\mathcal{T}(\lambda_0)))^n$ is called a chain of generalized eigenvectors (CGE) of $\mathcal{T}(\cdot)$  at $\lambda_0$ if the polynomial $v(\lambda)=\sum_{j=0}^{n-1}(\lambda-\lambda_0)^jv_j$ satisfies
		\begin{equation*}
		(\mathcal{T}v)^{(j)}(\lambda_0)=0,\,\,j=\overline{1,n-1}.
		\end{equation*}
		The order of the chain is the index $r_0$ satisfying
		\begin{equation*}
		(\mathcal{T}v)^{(j)}(\lambda_0)=0,\,\,j=\overline{1,r_0-1},\,\,\,(\mathcal{T}v)^{(r_0)}(\lambda_0)\neq0.
		\end{equation*}
		The rank $r(v_0)$ of a vector $v_0\in\ker(\mathcal{T}(\lambda_0)), v_0\neq0$, is the maximum order of CGEs starting at $v_0$.
		\item A canonical system of generalized eigenvectors (CSGE) of $\mathcal{T}(\cdot)$ at $\lambda_0$ is a system of vectors 
		\begin{equation*}
		v_{j,p}\in\dom(\mathcal{T}(\lambda_0)), \,\,j=\overline{0,\mu_p-1},\,p=\overline{1,q},
		\end{equation*}
		with the following properties:
		\begin{enumerate}
			\item $v_{0,1},\ldots,v_{0,q}$ form a basis of $\ker(\mathcal{T}(\lambda_0))$,
			\item the tuple $(v_{0,p},\ldots,v_{\mu_p-1,p})$ is a CGE of of $\mathcal{T}(\cdot)$ at $\lambda_0$ for $p=\overline{1,q}$,
			\item for $p=\overline{1,q}$ the indices $\mu_p$ satisfy
			\begin{equation*}
			\mu_p=\max\{r(v_0): v_0\in\ker(\mathcal{T}(\lambda_0))\setminus\hbox{span}\{v_{0,\nu}:1\leq\nu<p\}.
			\end{equation*}
			\item The number $\mu_1+\ldots+\mu_q$ is called the algebraic multiplicity of $\lambda_0$.
		\end{enumerate}
	\end{enumerate}
\end{definition}

\begin{lemma}\label{b3}
	Let Assumption {\bf (A1)} hold. Then $\Omega\subset\BbbC\setminus\sigma_{ess}(\mathcal{L_{-}})$. Moreover, $\Omega$ consists of either points of the resolvent set  or isolated eigenvalues of finite algebraic multiplicity of the operator pencil $\mathcal{L_{-}}(\cdot)$.
\end{lemma}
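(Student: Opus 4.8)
The plan is to follow the same scheme already used for the limiting operator pencil $\mathcal{L}_{-\infty}(\cdot)$ in Lemma \ref{Fredlim}, but now applied to the variable-coefficient pencil $\mathcal{L}_-(\cdot)$, using the exponential dichotomy on $\BbbR_-$ for the system \eqref{sys} that is furnished (via the roughness theorem) by Assumption {\bf (A1)} for $\lambda\in\Omega$. First I would recall from Lemma \ref{b3} (the Fredholm lemma immediately preceding) that for each fixed $\lambda\in\Omega$ the operator $\mathcal{L}_-(\lambda)$ is Fredholm of index $0$, with $\dim\ker(\mathcal{L}_-(\lambda))=\dim\ker(E_-(\lambda))$ and $\codim\ran(\mathcal{L}_-(\lambda))=\codim\ran(E_-(\lambda))$, where $E_-(\lambda)$ is the $2n\times 2n$ matrix built from the boundary frame ${I\choose c+\phi(\lambda)}$ and a basis of $E^u_-(\lambda)=\ran(I-P)$. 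This gives immediately that $\Omega\subset\BbbC\setminus\sigma_{ess}(\mathcal{L}_-)$, which is the first assertion.

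For the second assertion, the key point is that $\lambda\mapsto E_-(\lambda)$ depends analytically on $\lambda$ on $\Omega$. This requires two ingredients: analyticity of the boundary block $c+\phi(\lambda)$, which holds since $\phi$ is assumed complex analytic; and analyticity of a choice of basis $\{v_j^u(\lambda)\}_{j=1}^n$ of the dichotomy subspace $E^u_-(\lambda)$. The latter follows because the dichotomy projection $P=P(\lambda)$ depends analytically on $\lambda$ on $\Omega$ (the roughness/persistence theorem for exponential dichotomies produces projections depending analytically on parameters when the coefficient matrix does, and $A(x,\lambda)$ is polynomial in $\lambda$); hence a local analytic frame for $\ran(I-P(\lambda))$ can be obtained, e.g., by applying $I-P(\lambda)$ to a fixed basis and noting invertibility on a neighborhood of any given $\lambda_0$. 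Thus $\det E_-(\lambda)$ is an analytic function on the connected open set $\Omega$. Since $\Omega$ contains the closed right half-plane and, by Lemma \ref{nu} / Remark \ref{trunc} (or simply by the upper bound on real eigenvalues together with the essential-spectrum bound), $\det E_-(\lambda)\neq 0$ somewhere in $\Omega$, the analytic function $\det E_-(\cdot)$ is not identically zero; therefore its zero set is discrete in $\Omega$. At every $\lambda$ with $\det E_-(\lambda)\neq 0$, the reasoning preceding Lemma \ref{b3} shows $\lambda$ lies in the resolvent set; at the isolated zeros, $\lambda$ is an eigenvalue.

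It remains to check that each such isolated eigenvalue has finite algebraic multiplicity in the sense of the CSGE definition recalled above. Here I would argue that near an isolated zero $\lambda_0$ the pencil $\mathcal{L}_-(\lambda)$ is, up to a biholomorphic change of parametrization and after restricting to the finite-dimensional obstruction, equivalent to the finite matrix pencil $E_-(\lambda)$: one sets up a Lyapunov--Schmidt / Grushin-type reduction using the analytic right inverse of $\mathcal{L}_-(\lambda)$ available off the zero set, reducing the eigenvalue problem for $\mathcal{L}_-(\cdot)$ to that for the $2n\times 2n$ analytic matrix function $E_-(\lambda)$. Since $E_-(\cdot)$ is analytic and invertible at points near $\lambda_0$, it has an isolated eigenvalue of finite algebraic multiplicity at $\lambda_0$ by the classical theory of analytic matrix functions (the multiplicity equals the order of vanishing of $\det E_-$ at $\lambda_0$), and the Lyapunov--Schmidt reduction preserves chains of generalized eigenvectors and hence the algebraic multiplicity. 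I expect the main obstacle to be the bookkeeping in this last step: carefully verifying that the correspondence between CGEs of $\mathcal{L}_-(\cdot)$ (equivalently of the first-order pencil $\mathcal{T}_-(\cdot)$) at $\lambda_0$ and those of the finite pencil $E_-(\cdot)$ is order-preserving, so that the algebraic multiplicities genuinely agree; the analyticity of $P(\lambda)$ and of the frame, while standard, also needs to be invoked cleanly. Once these are in place, combining with Lemma \ref{b3} yields the stated conclusion.
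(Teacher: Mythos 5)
Your proposal is correct and follows essentially the same route as the paper: the preceding variable-coefficient Fredholm lemma gives $\Omega\cap\sigma_{ess}(\mathcal{L}_-)=\emptyset$, eigenvalues in $\Omega$ are identified with zeros of the analytic function $\det E_-(\lambda)$ (hence isolated), and finite algebraic multiplicity comes from reducing to this finite-dimensional analytic pencil. The only difference is cosmetic: for the multiplicity step the paper invokes meromorphicity of $\mathcal{L}_-^{-1}(\cdot)$ via the fixed-space two-component operator $\hat{\mathcal{L}}_-(\lambda)$ and cites \cite{BLR,MM}, whereas you sketch a Grushin/Lyapunov--Schmidt reduction to $E_-(\lambda)$ --- the same idea in substance (and note that your appeal to Lemma \ref{nu} for $\det E_-\not\equiv 0$ technically uses {\bf (A2)}, so the alternative justification you mention parenthetically is the one to keep).
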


\begin{proof}
	Fix $\lambda\in\Omega$. Then, by Lemma \ref{b3}, $\lambda\notin\sigma_{ess}(\mathcal{L_{-}})$. Therefore, $\lambda$ is either a point of the resolvent set of $\mathcal{L_{-}}(\cdot)$ or an eigenvalue of $\mathcal{L_{-}}(\cdot)$. Moreover, $\lambda$ is an eigenvalue of $\mathcal{L_{-}}(\cdot)$ if and only if $\lambda$ is an eigenvalue of $\mathcal{T_{-}}(\cdot)$ if and only if it is a root of the analytic function $\det(E_-(\lambda))$. Therefore, all the eigenvalues from $\Omega$ are isolated. Moreover, one can show that $\mathcal{L}^{-1}_{-}(\cdot)$ is meromorphic in $\Omega$ and the order of the pole at the eigenvalue $\lambda_0$ is the algebraic multiplicity of $\lambda_0$ (cf. \cite{BLR,MM}). In particular, one can use the functional analytic approach of combining the differential operator and
the boundary operator to a two-component operator defined on a fixed space, not
depending on the eigenvalue parameter, that is,
\begin{align*}
\begin{split}
&\hat{\mathcal{L}}_-(\lambda)\in\mathcal{B}(H^2(\BbbR_-),( L^2(\BbbR_-))^n\times\BbbC^n),\\
&\hat{\mathcal{L}}_-(\lambda)y:=\begin{pmatrix}
y'' + V(x) y- \lambda f_1(x) y-\lambda^2 f_2(x) y \\
(c+\phi(\lambda))y(0)- y'(0)
\end{pmatrix}.
\end{split}
\end{align*}
\end{proof}

\begin{lemma}\label{b4}
	Let Assumption {\bf (A4)} hold. Then $\Omega\subset\BbbC\setminus\sigma_{ess}(\mathcal{L_{}})$. Moreover, $\Omega$ consists of either points of the resolvent set  or isolated eigenvalues of finite algebraic multiplicity of the operator pencil $\mathcal{L_{}}(\cdot)$.
\end{lemma}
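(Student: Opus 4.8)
\noindent\emph{Proof sketch.}
The plan is to reproduce, on the whole line, the scheme used for the half-line pencil $\mathcal{L}_-$ in Lemmas~\ref{Fredlim} and~\ref{b3}, with the single boundary condition at $x=0$ replaced by a matching condition between the two subspaces built from the asymptotic systems at $x\to\pm\infty$. Passing to the first-order form $Y'=A(x,\lambda)Y$ of \eqref{sys}, Assumption~\textbf{(A4)} guarantees, exactly as in the discussion preceding Lemma~\ref{Fredlim} but now using $V_\pm$, $f_{j\pm}$, that there is an open set $\Omega$ containing the closed right half-plane on which both limiting matrices $A_\pm(\lambda)$ are hyperbolic with $\dim E_-^u(\lambda)=\dim E_+^s(\lambda)=n$. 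By the roughness theorem for exponential dichotomies, $Y'=A(x,\lambda)Y$ then has an exponential dichotomy on $\BbbR_-$ whose projection singles out the $n$-dimensional space $E_-^u(\lambda)$ of initial data giving solutions that are $L^2$ at $-\infty$, and an exponential dichotomy on $\BbbR_+$ whose projection singles out the $n$-dimensional space $E_+^s(\lambda)$ of initial data giving solutions that are $L^2$ at $+\infty$.

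Next I would build the resolvent by gluing the two one-sided Green's functions: for $F\in(L^2(\BbbR))^{2n}$, variation of parameters (as in \eqref{sol}) produces a particular solution on each half-line, and these match into a single $H^1(\BbbR)$ solution precisely when the jump of the particular solution at $x=0$ lies in $E_-^u(\lambda)\oplus E_+^s(\lambda)$. Introducing the Evans-type $2n\times 2n$ matrix $\mathbb{E}(\lambda)$ whose columns are a basis of $E_-^u(\lambda)$ together with a basis of $E_+^s(\lambda)$ (both chosen analytic in $\lambda$ on $\Omega$), one obtains, just as in Lemmas~\ref{Fredlim} and~\ref{b3}: $\lambda$ lies in the resolvent set of $\mathcal{T}(\cdot)$, equivalently of $\mathcal{L}(\cdot)$, iff $\det\mathbb{E}(\lambda)\neq 0$; $\ker\mathcal{L}(\lambda)$ is in one-to-one correspondence with $\ker\mathbb{E}(\lambda)\cong E_-^u(\lambda)\cap E_+^s(\lambda)$; $\ran\mathcal{L}(\lambda)$ and $\ran\mathcal{T}(\lambda)$ are closed because the solvability functional $F\mapsto(\text{jump at }0)$ is bounded on $(L^2(\BbbR))^{2n}$ with closed-range target $\ran\mathbb{E}(\lambda)$; and, running the parallel computation for the adjoint system $Y'=-A^*(x,\lambda)Y$ with its two one-sided dichotomies, $\codim\ran\mathcal{L}(\lambda)=\codim\ran\mathbb{E}(\lambda)=\dim\ker\mathbb{E}(\lambda)^*=\dim\ker\mathbb{E}(\lambda)$. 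Since $\dim\ker\mathbb{E}(\lambda)-\codim\ran\mathbb{E}(\lambda)=0$ for the square matrix $\mathbb{E}(\lambda)$, the operators $\mathcal{L}(\lambda)$ and $\mathcal{T}(\lambda)$ are Fredholm of index $0$ on all of $\Omega$, i.e.\ $\Omega\subset\BbbC\setminus\sigma_{ess}(\mathcal{L})$.

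For the second assertion, $\det\mathbb{E}(\lambda)$ is analytic on $\Omega$ and not identically zero (for instance it is nonvanishing for real $\lambda$ large enough, by an a priori spectral bound as in Lemmas~\ref{nu} and~\ref{Vinf}), so its zeros, hence the eigenvalues of $\mathcal{L}(\cdot)$ in $\Omega$, are isolated. Finite algebraic multiplicity then follows, as in \cite{BLR,MM}, from the fact that $\mathcal{L}(\cdot)^{-1}$ is meromorphic on $\Omega$ with a pole of order equal to the algebraic multiplicity at each eigenvalue; here, since there is no boundary condition on the whole line, $\mathcal{L}(\lambda)$ may be viewed directly as the analytic operator family $\hat{\mathcal{L}}(\lambda):H^2(\BbbR)\to(L^2(\BbbR))^n$, $\hat{\mathcal{L}}(\lambda)y=y''+V(x)y-\lambda f_1(x)y-\lambda^2 f_2(x)y$, acting on a fixed $\lambda$-independent space. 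The main obstacle is the two-sided bookkeeping: assembling the whole-line Green's function from the two one-sided exponential dichotomies, identifying the solvability obstruction with transversality of $E_-^u(\lambda)$ and $E_+^s(\lambda)$, and carrying out the adjoint computation needed for the index-$0$ claim; once these are in place, the analytic--Fredholm theory of \cite{BLR,MM} delivers the isolatedness and finite multiplicity statements.
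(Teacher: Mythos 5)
Your proposal is correct and follows essentially the same route as the paper: pass to the first-order system, use Assumption {\bf (A4)} and roughness of exponential dichotomies on each half-line, replace the half-line boundary subspace by $E^s_+(\lambda)$, and encode solvability in a $2n\times 2n$ matrix built from bases of $E^u_-(\lambda)$ and $E^s_+(\lambda)$, yielding closed range, Fredholm index $0$, isolated zeros of the analytic determinant, and finite algebraic multiplicity via the meromorphic inverse as in \cite{BLR,MM}. Your added remark that the determinant is not identically zero (by the a priori bound on real eigenvalues) is a small but welcome point that the paper leaves implicit.
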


\begin{proof}
	One can prove the result similar to Lemma \ref{b3} for the full line problem. In this case, one would use $E^s_+(\lambda)$ instead of $\colspan{I \choose c+ \phi(\lambda)}$, and a key relation is
	\begin{equation*}
	F\in\ran(\mathcal{T}(\lambda)) \iff  \int_{-\infty}^0PU^{-1}(t)F(t)dt+ \int^{\infty}_0(1-Q)U^{-1}(t)F(t)dt\in\ran(E(\lambda)),
	\end{equation*}
	where $\mathcal{T}(\lambda)$ is the first-order operator pencil associated with the eigenvalue problem \eqref{main}, $P$ and $Q$ are the dichotomy projections on $\BbbR_-$ and $\BbbR_+$, respectively, and $E(\lambda)=\ran(I-P(\lambda))\wedge\ran(Q(\lambda))$.
Then the proof is similar to that of Lemma \ref{b3}.
\end{proof}

\end{document}